\newcommand\blfootnote[1]{%
  \begingroup
  \renewcommand\thefootnote{}\footnote{#1}%
  \addtocounter{footnote}{-1}%
  \endgroup
}
\numberwithin{equation}{section}  
\newcommand\numberthis{\addtocounter{equation}{1}\tag{\theequation}}
\theoremstyle{plain}
\newtheorem{thm}{Theorem}[section]
\newtheorem{thmy}{Theorem}
\newenvironment{thmx}{\stepcounter{thm}\begin{thmy}}{\end{thmy}}
\newtheorem{thmz}{Theorem}
\newtheorem{prop}[thm]{Proposition} 
\newtheorem{lem}[thm]{Lemma}
\newtheorem*{rep@theorem}{\rep@title}
\newcommand{\newreptheorem}[2]{%
\newenvironment{rep#1}[1]{%
 \def\rep@title{#2 \ref{##1}}%
 \begin{rep@theorem}}%
 {\end{rep@theorem}}}
\theoremstyle{definition}
\newtheorem{rk}[thm]{Remark}
\newtheorem{defin}[thm]{Definition}
\newcommand{\N}{\mathbb{N}} 
\newcommand{\R}{\mathbb{R}}
\renewcommand{\Im}{\operatorname{Im}} %real part
\renewcommand{\emptyset}{\varnothing} %empty set
\newcommand{\e}{\varepsilon} %epsilon
\newcommand{\res}{\hbox{ {\vrule height .22cm}{\leaders\hrule\hskip.2cm} }} % Measure Restriction Symbol
\newcommand{\Atop}[2]{\genfrac{}{}{0pt}{}{#1}{#2}}
\newcommand{\dist}{\operatorname{dist}}
\newcommand{\supp}{\operatorname{supp}}
\title{\textsc{Sufficient conditions for $C^{1,\alpha}$ parametrization and rectifiability}}
\author{Silvia Ghinassi}
\date{}
\newcommand\Author{\textsc{Silvia Ghinassi}}
\let\Title\@title
\begin{document}
\maketitle

\begin{abstract}

We say a measure is $C^{1,\alpha}$ $d$-rectifiable if there is a countable union of $C^{1,\alpha}$ $d$-surfaces whose complement has measure zero. We provide sufficient conditions for a Radon measure in $\R^n$ to be $C^{1,\alpha}$ $d$-rectifiable, with $\alpha \in (0,1]$. The conditions involve a Bishop-Jones type square function and all statements are quantitative in that the $C^{1,\alpha}$ constants depend on such a function. Along the way we also give sufficient conditions for $C^{1,\alpha}$ parametrizations for Reifenberg flat sets in terms of the same square function. Key tools for the proof come from David and Toro's Reifenberg parametrizations of sets with holes in the H\"{o}lder and Lipschitz categories. 
%In the last section we provide a rich collection of examples and observations on $C^{1,\alpha}$ functions and $C^{1,\alpha}$ rectifiable sets.
\end{abstract}
\blfootnote{\emph{Date:} November 28th, 2019}
\blfootnote{\emph{2010 Mathematics Subject Classification}. Primary: 28A75, 28A12. Secondary: 26A16, 28A78, 42B99.}
\blfootnote{\emph{Keywords and phrases}: H\"{o}lder, Lipschitz, parametrizations, $C^{1,\alpha}$ rectifiable measures, $C^{1,\alpha}$ rectifiable sets,  Jones beta numbers, Jones square functions, Reifenberg sets.}
% \tableofcontents

%%%%%%%%%%%%%%%%
%%%%%%%%%%%%%%%%
\section{Introduction}
 %%%%%%%%%%%%%%%%
%%%%%%%%%%%%%%%%

%%%%%%%%%%%%%%%%
\subsection{Background}
%%%%%%%%%%%%%%%%

Recall that a set $E$ in $\R^n$ is \emph{Lipschitz image} $d$-rectifiable -- countably $d$-rectifiable in Federer's terminology -- if there exist countably many Lipschitz maps $f_i \colon \R^d \to \R^n$ such that $\mathcal{H}^d( E \setminus \bigcup_i f_i(\R^d))=\mathcal{H}^d\res_E( \R^n \setminus \bigcup_i f_i(\R^d))=0$, where $\mathcal{H}^d$ denotes the $d$-dimensional Hausdorff measure.
In this paper, we investigate sets that can be covered by images of more regular maps (see Section \ref{results} for the statements of the main results and Section \ref{motivation} for motivations).

We say that a set $E$ in $\R^n$ is $C^{1,\alpha}$ $d$-rectifiable if there exist countably many continuously differentiable Lipschitz maps $f_i \colon \R^d \to \R^n$ with $\alpha$-H\"{o}lder derivatives such that
\begin{equation}
\mathcal{H}^d\res_E \left( \R^n \setminus \bigcup_i f_i(\R^d)\right)=0.
\end{equation}

For \emph{Lipschitz image} rectifiability, we could replace the class of Lipschitz images with bi-Lipschitz images, $C^1$ images, Lipschitz graphs, or $C^1$ graphs without changing the class of rectifiable sets; see Theorem 3.2.29 in   \cite{federer} and   \cite{davidbook} for proofs of these equivalences. From now on we will refer to \emph{Lipschitz image} rectifiability simply as rectifiability.

On the other hand, rectifiability of order $C^{1,\alpha}$ does not imply rectifiability of order $C^{1,\alpha'}$ for any $0 \leq \alpha < \alpha' \leq 1$. More generally, $C^{k-1,1}$ rectifiability is equivalent to $C^k$ rectifiability (Proposition 3.2 in \cite{anzellottiserapioni}), while there are $C^{k,s}$ rectifiable sets that are not $C^{m,t}$ rectifiable, whenever $k,m \geq 1$ and $k + s <m+t$ (Proposition 3.3 in \cite{anzellottiserapioni}). For completeness, we include the proofs of these results in the Appendix, as Propositions \ref{c1=c2} and \ref{c1snoc1t}.

Classical rectifiability of sets has been widely studied and characterized, see \cite{mattila} for an exposition. However, a quantitative theory of rectifiability was only developed in the late 1980s to study connections between rectifiable sets and boundedness of singular integral operators.
Peter Jones in \cite{atst} gives a quantitative control on the length of a rectifiable curve in terms of a sum of $\beta$ numbers. These numbers capture, at a given scale and location, how far a set is from being a line. Jones' proof was generalized to $1$-dimensional objects in $\R^n$ by Okikiolu in   \cite{okikiolu} and in Hilbert spaces by Schul in \cite {raananhilbert}. 

%STATEMENT OF THE THEOREM

In  \cite{davidtoro} David and Toro prove that one-sided Reifenberg flat sets admit a bi-H\"{o}lder parametrization, which is a refinement of Reifenberg's original proof in \cite{reifen}. Moreover, if one also assumes square summability of the $\beta$'s the parametrization is actually bi-Lipschitz (see also \cite{toro1995}). To better understand this, consider a variation of the usual snowflake. Start with the unit segment $[0,1]$, and let this be step $0$. At each step $i$ we create an angle of $\alpha_i$ by adding to each segment of length $2^{-i+1}$ an isosceles triangle in the center, with base $\frac{2^{-i+1}}{3}$ and height $\frac{2^{-i+1}\alpha_i}{6}$ (since the $\alpha_i$'s are small we can use a first order approximation). Then the resulting curve is rectifiable (i.e. has finite length) if and only if $\sum_i\alpha_i^2 < \infty$ (see Exercise 10.16 in   \cite{bishopperes}).

Consider now a smoothened version of the snowflake where we stop after a finite number of iterations. This set is clearly $C^{1,\alpha}$ rectifiable. Our goal is to prove a quantitative bound on the H\"older constants in term of the quantity $\sum_i\frac{\alpha_i^2}{2^{-2\alpha i}} < \infty$. For a general one-sided Reifenberg flat set $E$, this means that we can find a parametrization of $E$ via a $C^{1,\alpha}$ map. The proofs of the parametrization results (Sections \ref{prelim} and \ref{improv}) follow the steps of the proof in the paper \cite{davidtoro}.  However detailed knowledge of their paper will not be assumed. Instead specific references will be given for those interested in the proofs of the cited results.

 %%%%%%%%%%%%%%%%
\subsection{Outline of the paper and main results} \label{results}
%%%%%%%%%%%%%%%%

Throughout the paper, we will prove three different versions of the main theorem on parametrizations. For convenience we will now state only two of them, Theorems \ref{theorema} and \ref{theoremb}. We state the more technical Theorem \ref{epsilonk11} and Theorem \ref{logtheorem} in Section \ref{prelim} after a few more definitions. Then we state Theorems \ref{rectinfty} and \ref{rect1} which are our rectifiability results. 
Let us recall the definition of $\beta$ numbers.
\begin{defin}
Let $E \subseteq \R^n$, $x \in \R^n$, and $r>0$. Let $d$ be a fixed integer, $0 < d < n$. Define
\begin{equation}
\beta_{\infty}^{E,d}(x,r)=\frac{1}{r}\inf_P\left\{\sup_{y \in E \cap B(x,r)} \dist(y,P)\right\}
\end{equation}
if $E \cap B(x,r) \neq \emptyset$, where the infimum is taken over all $d$-planes $P$, and $\beta_{\infty}^E(x,r)=0$ if $E \cap B(x,r) = \emptyset$. 

If $E$ is $\mathcal{H}^d$-measurable and $p \in [1, \infty)$, define
 \begin{equation}
\beta_p^{E,d}(x,r)=\left( \inf_P\left\{\frac{1}{r^d} \int_{y \in E \cap B(x,r)} \left(\frac{\dist(y,P)}{r}\right)^p\,d\mathcal{H}^d(y)\right\} \right)^{\frac{1}{p}},
\end{equation}
for $x \in \R^n$ and $r>0$, where the infimum is taken over all $d$-planes $P$.
\end{defin}

\begin{rk}
In the future we will write $\beta_{\infty}^E(x,r)$ for $\beta_{\infty}^{E,d}(x,r)$, and $\beta_p^E(x,r)$ for $\beta_p^{E,d}(x,r)$, omitting the dependence on $d$, to avoid a too cumbersome notation, as there will not be any chance for confusion.
\end{rk}

Next, we need to define what is meant by one-sided Reifenberg flat.

\begin{defin} \label{reifen}
Let $E \subseteq \R^n$ closed and let $\e >0$. Let $d$ be a fixed integer, $0 < d < n$. Define $E$ to be $(\e,d)$-\emph{Reifenberg flat} if the following condition holds. 

For $x \in E$, $0 < r \leq 10$ there is a $d$-plane $P(x,r)$ passing through $x$ such that 
\begin{align*} \numberthis
\dist(y, P(x,r)) & \leq \e r, \qquad \forall y \in E \cap B(x,r), \\
\dist(y, E) & \leq \e r, \qquad \forall y \in P(x,r) \cap B(x,r).
\end{align*}
\end{defin}

\begin{defin}
Let $x \in \R^n$ and $r>0$. If $E,F \subseteq \R^n$ are non-empty, define \emph{normalized Hausdorff distances} to be the quantities
\begin{equation}
d_{x,r}(E,F) = \frac{1}{r} \max \left\{ \sup_{y \in E \cap B(x,r)} \dist(y,F), \sup_{y \in F \cap B(x,r)} \dist(y,E)\right\}.
\end{equation}
\end{defin}

\begin{defin} \label{reifenhole}
Let $E \subseteq \R^n$
 and let $\e >0$. Define $E$ to be \emph{one-sided $(\e,d)$-Reifenberg flat} if the following conditions (1)-(2) hold. 
\begin{enumerate}
\item[(1)] For $x \in E$, $0 < r \leq 10$ there is a $d$-plane $P(x,r)$ passing through $x$ such that
\[
\dist(y, P(x,r)) \leq \e r, \qquad y \in E \cap B(x,r).
\]
\item[(2)] Moreover we require some compatibility between the $P(x,r)$'s:
\begin{align*} \numberthis
& d_{x,10^{-k}}(P(x,10^{-k}),P(x,10^{-k+1})) \leq \e, \qquad \forall x\in E, \ \forall k \geq 0, \\
&  d_{x,10^{-k+2}}(P(x,10^{-k}),P(y,10^{-k})) \leq \e, \qquad \forall x,y \in E, \ |x-y| \leq 10^{-k+2}, \ \forall k \geq 0.
\end{align*}
\end{enumerate}
\end{defin}
\begin{rk}
We will simply write (one-sided) Reifenberg flat for (one-sided) $(\e,d)$-Reifenberg flat, as $\e$ and $d$ will stay fixed, throughout the paper.
\end{rk}
\begin{rk}
It is important to observe that the sets in Definition \ref{reifen}, for $\e$ sufficiently small, are not allowed to have any holes, meaning that $E$ must be simply connected, while the sets in Definition \ref{reifenhole} are allowed holes of any size. The compatibility conditions is (2) are automatically satisfied by Reifenberg flat sets without holes.
\end{rk}

Before we state our main results, let us recall some theorems of David and Toro \cite{davidtoro}.

\begin{thm} [David-Toro,  Proposition 8.1  \cite{davidtoro}] \label{f}
Let $\e>0$ small enough depending on $n$ and $d$, and let $E \subseteq B(0,1)$, where $B(0,1)$ denotes the unit ball in $\R^n$. Assume $E$ is one-sided Reifenberg flat. Then there exists a map $f \colon \Sigma_0 \to \R^n$, where $\Sigma_0$ is a $d$-plane in $\R^n$, such that $E \subset f(\Sigma_0)$, and $f$ is bi-H\"{o}lder, that is
\begin{equation}
\frac14 |x-y|^{1+C\e} \leq |f(x)-f(y)| \leq 3 |x-y|^{1-C\e}, \quad \text{for all $x,y \in \Sigma_0$},
\end{equation}
where $C$ depends only on $n$ and $d$.

\end{thm}
\begin{rk}
We can get the map $f$ in Theorem \ref{f} to be bi-H\"{o}lder with any exponent strictly smaller than $1$, by choosing $\e$ accordingly small, although sharp exponents are not known.
\end{rk}

Set $r_k=10^{-k}$. 
\begin{thm} [David-Toro, Corollary 12.6 \cite{davidtoro}] \label{dtbinfty}
Let $\e>0$ small enough, and let $E \subseteq B(0,1)$ be a Reifenberg flat set and moreover assume that
\begin{equation} \label{e:dtbinfty}
\sum_{k=0}^{\infty}\beta_{\infty}^E(x,r_k)^2\leq M, \quad \text{ for all $x \in E$}.
\end{equation}
Then $f \colon \Sigma_0 \to \R^n$ is bi-Lipschitz, and $E \subset f(\Sigma_0)$. Moreover the Lipschitz constants depend only on $n$, $d$, and $M$.
\end{thm}
Moreover,
\begin{thm} [David-Toro, Corollary 13.1 \cite{davidtoro}] \label{dtb1}
Let $\e>0$ small enough, and let $E \subseteq B(0,1)$ be a Reifenberg flat $\mathcal{H}^d$-measurable set and moreover assume that
\begin{equation} \label{e:dtb1}
\sum_{k=0}^{\infty}\beta_1^E(x,r_k)^2\leq M, \quad \text{ for all $x \in E$}.
\end{equation}
Then $f \colon \Sigma_0 \to \Sigma$ is bi-Lipschitz. Moreover the Lipschitz constants depend only on $n$, $d$, and $M$. 
\end{thm}

\begin{rk}
In Theorems \ref{dtbinfty} and \ref{dtb1} the set $E$ is require to be Reifenberg flat, while for Theorem \ref{f} $E$ can be merely one-sided flat. This is because we need to use two-sided flatness to control the angles between planes using $\beta$-numbers.

Theorems \ref{theorema} and \ref{theoremb} have more technical counterparts, Theorems \ref{epsilonk11} and \ref{logtheorem}. These results only require one-sided flatness of the set $E$ as they do not utilize its $\beta$-numbers directly, but instead provide a control on the angles which depends on the parametrization itself. Due to the more technical nature of the statements, we will state and prove them in Section \ref{prelim}.
\end{rk}

We are now ready to state our theorems. 

\begin{thmx}\label{theorema}
Let $E \subseteq B(0,1)$ be a Reifenberg flat set and $\alpha \in (0,1)$. Also assume that there exists $M >0$ such that
\begin{equation} \label{e:theorema}
\sum_{k=0}^{\infty}\frac{\beta_{\infty}^E(x,r_k)^2}{r_k^{2\alpha}} \leq M, \quad \text{ for all $x \in E$}.
\end{equation}
Then the map $f \colon \Sigma_0 \to \Sigma$ constructed in Theorem \ref{f} is invertible and differentiable, and both $f$ and its inverse are $C^{1,\alpha}$ maps. In particular, $f$ is continuously differentiable. Moreover the H\"{o}lder constants depend only on $n$, $d$, and $M$.

When $\alpha=1$, if we replace $r_k$ in the left hand side of (\ref{e:theorema}) by $r_k\eta(r_k)$, where $\eta(r_k)^2$ satisfies the Dini condition, then we obtain that $f$ and its inverse are $C^{1,1}$ maps.
\end{thmx}

\begin{rk}
The case $\alpha=0$ was studied in \cite{davidtoro}, see Theorem \ref{dtbinfty}. Notice that they obtain a Lipschitz parametrization, that is $C^{0,1}$ and not a $C^1$ parametrization.

For the case $\alpha=1$ we need a small perturbation of our hypothesis for the proof to extend to this case and obtain a $C^{1,1}$ parametrization. (see Theorem \ref{logtheorem}). 
We say that a function $\omega$ satisfies the Dini condition if $\sum_{k=1}^{\infty} \omega(r_k) < \infty$. A possible choice for $\eta$ in Theorem \ref{theorema} is $\eta(r_k)=\frac{1}{\log(\frac{1}{r_k})^{\gamma}}= \frac{1}{\log(10)^{\gamma}}\frac{1}{k^{\gamma}}$, for $\gamma > \frac12$.
\end{rk}

\begin{rk} \label{e:pointwisebound}
In the case $\alpha=1$ we could also replace (\ref{e:theorema}) and all similar equations below with something of the type
\begin{equation}
\sum_{k=0}^{\infty}\frac{\beta_{\infty}^E(x,r_k)}{r_k} \leq M, \quad \text{ for all $x \in E$},
\end{equation}
as this would only simplify the proof of Theorem \ref{logtheorem}. Notice the lack of the square.

On the other hand, the proof of Theorem \ref{epsilonk11} can also be easily modified to hold if one assumes the possibly simpler condition on $\beta$-numbers:
\begin{equation}
\beta_{\infty}(x,r_k) \leq C r_k^{\alpha} \quad \text{for all $x \in E$}.
\end{equation}
This type of approach has recently been investigated in \cite{delninobinna2019}.

\end{rk}

Even without assuming a higher regularity on our set $E$, such as Ahlfors regularity, we can prove a better sufficient condition involving the possibly smaller $\beta_1$ numbers.

\begin{thmx}\label{theoremb}
Let $E \subseteq B(0,1)$ be a $\mathcal{H}^d$-measurable Reifenberg flat set and $\alpha \in (0,1)$. Also assume that there exists $M > 0$ such that
\begin{equation} \label{e:theoremb}
\sum_{k=0}^{\infty}\frac{\beta_1^E(x,r_k)^2}{r_k^{2\alpha}} \leq M, \quad \text{ for all $x \in E$}.
\end{equation}
Then the map $f \colon \Sigma_0 \to \Sigma$ constructed in Theorem \ref{f} is invertible and differentiable, and both $f$ and its inverse are $C^{1,\alpha}$ maps. In particular, $f$ is continuously differentiable. Moreover the H\"{o}lder constants depend only on $n$, $d$, and $M$.

When $\alpha=1$, if we replace $r_k$ in the left hand side of (\ref{e:theoremb}) by $r_k\eta(r_k)$, where $\eta(r_k)^2$ satisfies the Dini condition, then we obtain that $f$ and its inverse are $C^{1,1}$ maps.
\end{thmx}

Before stating the other results, let us recall the definition of density of a measure. 
\begin{defin}
Let $0 \leq s < \infty$ and let $\mu$ be a Borel measure on $\R^n$. The upper and lower $s$-densities of $\mu$ at $x$ are defined by
\begin{align*} \numberthis
\theta^{*s}(\mu,x)& =\limsup_{r \to 0}\frac{\mu(B(x,r))}{r^s} \\
\theta_{*}^s(\mu,x) &=\liminf_{r \to 0}\frac{\mu(B(x,r))}{r^s}.
\end{align*}
If they agree, their common value is called the $s$-density of $\mu$ at $x$ and denoted by
\begin{equation}
    \theta^s(\mu,x)=\theta^{*s}(\mu,x)=\theta_{*}^s(\mu,x).
\end{equation}
If $E \subseteq \R^n$, we define the upper and lower $s$-densities of $E$ at $x$ as $\theta^{*s}(E,x)=\theta^{*s}(\mathcal{H}^s{\res E},x)$ and  $\theta^{s}_{*}(E,x)=\theta^{s}_{*}(\mathcal{H}^s{\res E},x)$, respectively.
\end{defin}
We are now ready to state the theorems regarding rectifiability. 

\begin{thmz} \label{rectinfty}
Let $E \subseteq \R^n$ be such that $0<\theta^{d*}(E,x)<\infty$, for $\mathcal{H}^d$ a.e. $x \in E$ and let $\alpha\in (0,1)$. Assume that for almost every $x \in E$,
\begin{equation} \label{e:rectinfty}
J^{E}_{\infty,\alpha}(x)=\sum_{k=0}^{\infty} \frac{\beta^E_{\infty}(x,r_k)^2}{r_k^{2\alpha}} < \infty.
\end{equation}
Then $E$ is (countably) $C^{1,\alpha}$ $d$-rectifiable.

When $\alpha=1$, if we replace $r_k$ in the left hand side of (\ref{e:rectinfty}) by $r_k\eta(r_k)$, where $\eta(r_k)^2$ satisfies the Dini condition, then we obtain that $E$ is $C^{2}$ rectifiable.
\end{thmz}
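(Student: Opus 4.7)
The plan is to reduce Theorem \ref{rectinfty} to the parametrization result Theorem \ref{theorema} via a standard exhaustion argument. The target is a countable cover of $E$ (up to $\mathcal{H}^d$-null error) by pieces which, after translation and rescaling, satisfy the hypotheses of Theorem \ref{theorema} inside $B(0,1)$. Since $\mathcal{H}^d$-a.e. $x\in E$ carries positive and finite upper density, and the tails of $J^E_{\infty,\alpha}$ tend to zero pointwise a.e., I can apply Egoroff's theorem to find, for every $\e>0$ and every choice of thresholds, a compact subset $F\subseteq E$ of almost full $\mathcal{H}^d$-measure on which: (i) $\theta^{d,*}(E,x)$ and $\theta^{d}_*(E,x)$ are pinched between fixed constants, (ii) the pointwise square function $J^E_{\infty,\alpha}(x)$ is uniformly bounded by some $M$, and (iii) its tail $\sum_{k\geq k_0}\beta^E_\infty(x,r_k)^2 r_k^{-2\alpha}$ is uniformly less than $\e^2$ for all $x\in F$, once $k_0$ is large enough. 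Writing $E$ as a countable union of such $F$'s (letting $\e\to 0$ and the density bounds tighten) reduces the problem to showing that every such $F$ is $C^{1,\alpha}$ $d$-rectifiable.

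Fix one such $F$ and a point $x_0\in F$. The essential observation is that the positive lower density of $E$ at points of $F$ upgrades to a \emph{relative} density statement for $F$ itself at $\mathcal{H}^d$-a.e. $x\in F$, so one can further restrict (again by Egoroff) to a compact subset $F'\subseteq F$ on which, at all sufficiently small scales $r\leq r_0$, $F'\cap B(x,r)$ is $\tau r$-Hausdorff-dense in $E\cap B(x,r)$ for any prescribed $\tau>0$. This Hausdorff proximity implies two things simultaneously: first, the one-sided Reifenberg condition (1) of Definition \ref{reifenhole} passes from $E$ to $F'$ at scales below $r_0$ with a slightly worse constant, since any best-fitting $d$-plane $P$ for $E\cap B(x,r)$ is automatically a good plane for the denser subset $F'\cap B(x,r)$; second, by comparing the $\beta_\infty$ numbers of $F'$ to those of $E$ at each scale, the square function hypothesis (\ref{e:rectinfty}) transfers to $F'$ with a comparable bound. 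The compatibility condition (2) of Definition \ref{reifenhole} between planes at consecutive scales is then a direct consequence of this one-sided flatness applied at two nested scales, as in the standard arguments of \cite{davidtoro}.

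Once $F'$ has been thus controlled, I cover $F'$ by finitely many balls of radius $r_0/2$ and restrict to one such ball centered at some $x_1\in F'$. Translating $x_1$ to the origin and dilating by a factor of $r_0$, the set $F'\cap B(x_1,r_0/2)$ becomes a compact subset of $B(0,1)$ that is one-sided Reifenberg flat with constant $\e$ (taken small enough for Theorem \ref{theorema} to apply) and whose rescaled square function is bounded by a multiple of $M$. Theorem \ref{theorema} then supplies a $C^{1,\alpha}$ map $f\colon \Sigma_0\to \R^n$ whose image contains this piece of $F'$; undoing the dilation shows $F'\cap B(x_1,r_0/2)$ lies in a $C^{1,\alpha}$ $d$-surface. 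Covering $F'$ and $F$ by countably many such pieces and then exhausting $E$ by the $F$'s gives the required countable union of $C^{1,\alpha}$ $d$-surfaces covering $\mathcal{H}^d$-almost all of $E$.

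The main obstacle is step two: verifying that a carefully chosen compact subset $F'\subseteq E$ is one-sided Reifenberg flat at small scales and that the $\beta_\infty^{F'}$ numbers are controlled by $\beta_\infty^E$, since $\beta_\infty$ is defined as a supremum and can in principle decrease upon passing to subsets, but also requires proper control of the two-sided approximation needed for the compatibility conditions (2). The positive lower density on $F'$ is exactly what makes this possible. Finally, the $\alpha=1$ case with the logarithmic refinement uses Theorem \ref{logtheorem} in place of Theorem \ref{theorema} to produce a $C^{1,1}$ parametrization, and the identification of $C^{1,1}$ rectifiability with $C^2$ rectifiability (Proposition \ref{c1=c2} in the Appendix) converts this into $C^2$ rectifiability.
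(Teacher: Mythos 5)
Your approach is a genuinely different route from the paper's, but it has a gap that I think is fatal as written. The paper's proof goes through the Azzam--Tolsa theorem: the hypothesis $J^E_{\infty,\alpha}<\infty$ implies $J^E_{\infty}<\infty$, and combined with $0<\theta^{d*}(E,x)<\infty$ this gives (via Lemma~\ref{beta2betainfty} and Theorem~\ref{azzamtolsa}) that $E$ is Lipschitz $d$-rectifiable. The one-sided Reifenberg flat pieces are then extracted not by a density/Egoroff argument on $E$ itself, but by Lemma~\ref{lipccbp}: each Lipschitz image is, up to measure zero, a countable union of $C^1$ images $g_i(U_{z_l})$ on which the Jacobian has full rank, and the tangent planes of these $C^1$ surfaces furnish, tautologically, the compatible family $\{P(x,r)\}$ needed in Definition~\ref{reifenhole}(2). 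Passing to $E_{i,q}=E\cap A_q$ and then cutting by the level sets of $J_{\infty,\alpha}$ gives pieces satisfying the hypotheses of Theorem~\ref{theorema}, since $\beta_\infty^{E_{i,q,p}}\leq\beta_\infty^E$ automatically for subsets.

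The gap in your argument appears in the very first step. You write that Egoroff produces a compact $F$ on which $\theta^{d*}(E,x)$ \emph{and} $\theta^d_*(E,x)$ are pinched between fixed constants, and you then lean on positive lower density both to get the $\tau r$-Hausdorff density of $F'$ inside $E$ and to make the approximating planes unambiguous so that condition~(2) of Definition~\ref{reifenhole} can be verified. But the hypothesis of Theorem~\ref{rectinfty} is $0<\theta^{d*}(E,x)<\infty$ only; $\theta^d_*(E,x)>0$ is not assumed. Egoroff cannot manufacture a uniform lower density bound out of an assumption that gives none. Without it, your extraction fails in two places at once: first, $E\setminus F'$ can be geometrically spread out inside $B(x,r)$ even if it has small measure, so you cannot deduce that every point of $E\cap B(x,r)$ lies within $\tau r$ of $F'$; second, and more fundamentally, $\beta_\infty$-smallness of $E$ at a scale does not by itself pin down the approximating plane $P(x,r)$ up to the accuracy required by condition~(2) when the set clusters near a lower-dimensional subspace of $P(x,r)$, and only a quantitative spread (i.e.\ lower regularity) rules that out. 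In the paper this issue never arises because the plane family is inherited from the $C^1$ surface, not chosen by minimizing $\beta_\infty$. If you insert a citation of Theorem~\ref{azzamtolsa} at the start to obtain rectifiability (and hence $\theta^d_*(E,\cdot)>0$ a.e.\ as a byproduct), your Egoroff argument becomes salvageable, though you would still need to either carry out the argument that density plus $\beta_\infty$-smallness implies condition~(2) carefully, or — more simply — use the tangent-plane structure of the $C^1$ pieces as the paper does.

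One small additional remark: for the $\alpha=1$ case, Theorems~\ref{theorema} and~\ref{theoremb} already contain the logarithmic refinement in their second clauses, so you should invoke those rather than Theorem~\ref{logtheorem} directly, which is stated in terms of the $\e_k$ coefficients rather than the $\beta$ numbers.
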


\begin{rk}
For the second part of the statement recall that $C^{1,1}$ rectifiability coincides with $C^2$ rectifiability (see Proposition \ref{c1=c2}).
\end{rk}

\begin{rk}
In Theorem \ref{rectinfty}, we will use the assumptions on the upper density in order to prove that $E$ is rectifiable, using a Theorem of Azzam and Tolsa from \cite{ATII}. We will need rectifiability in order to obtain (local) flatness. Note that, in this case, we cannot weaken the assumptions on the density to be $\theta^{d*}(E,x)>0$ and $\theta^d_*(E,x)<\infty$ to obtain rectifiability, as in \cite{naber}, because we will use that $\theta^{d*}(E,x)<\infty$ to compare $\beta_{\infty}^E$ with $\beta_2^E$ in order to apply the aforementioned theorem of Azzam and Tolsa. See the proof of Theorem \ref{rectinfty} for details.
\end{rk}

We can also state a version of Theorem \ref{rectinfty} for rectifiability of measures. A \emph{Radon measure} $\mu$ on $\R^n$ is a Borel regular outer measure that is finite on compact subsets of $\R^n$.
If $\mu$ is a Radon measure, define
\begin{equation}
\beta_p^{\mu}(x,r)=\inf_P\left\{\frac{1}{r^d} \int_{y \in B(x,r)} \left(\frac{\dist(y,P)}{r}\right)^p\,d\mu(y)\right\}^{\frac{1}{p}},
\end{equation}
for $x \in \R^n$ and $r>0$, where the infimum is taken over all $d$-planes $P$.
Moreover, define
\begin{equation}
J^{\mu}_{p, \alpha}(x)= \sum_{k=0}^{\infty}\frac{\beta_p^{\mu}(x,r_k)^2}{r_k^{2\alpha}}.
\end{equation}
%We are now ready to state our result concerning rectifiability of measures.

\begin{thmz} \label{rect1}
Let $\mu$ be a Radon measure on $\R^n$ such that $0< \theta^{d *}(\mu,x)$ and $\theta_*^d({\mu},x) <\infty$ for $\mu$-a.e. $x$ and let $\alpha\in (0,1)$. Assume that for $\mu$-a.e. $x \in \R^n$,
\begin{equation} \label{e:rect1}
J^{\mu}_{2,\alpha}(x) < \infty.
\end{equation}
Then $\mu$ is (countably) $C^{1,\alpha}$ $d$-rectifiable. 

When $\alpha=1$, if we replace $r_k$ in the left hand side of (\ref{e:rect1}) by $r_k\eta(r_k)$, where $\eta(r_k)^2$ satisfies the Dini condition, then we obtain that $\mu$ is $C^{2}$ rectifiable.
\end{thmz}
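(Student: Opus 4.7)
The plan is to combine the rectifiability criterion of Azzam--Tolsa with the parametrization result of Theorem \ref{theoremb}. Since $r_k \le 1$, the weights $r_k^{-2\alpha} \ge 1$, so the hypothesis immediately gives $\sum_k \beta_2^\mu(x,r_k)^2 \le J_{2,\alpha}^\mu(x) < \infty$ for $\mu$-almost every $x$. Together with $0 < \theta^{d*}(\mu,x) < \infty$, this is precisely the hypothesis of the theorem of Azzam and Tolsa from \cite{ATII}, which yields that $\mu$ is $d$-rectifiable. In particular $\mu$ is concentrated on a countable union of $d$-dimensional Lipschitz images, and $\mu$ and $\mathcal{H}^d \res \supp \mu$ are mutually absolutely continuous with comparable densities away from a $\mu$-null set.

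Next, by standard Lusin/Egorov arguments applied to the density, to the approximate tangent plane, and to the tail of the series $J^\mu_{2,\alpha}$, I would exhaust $\mu$ up to a null set by a countable union of compact sets $E_j$ on each of which, for some uniform $r_j,c_j,C_j,M_j > 0$: (i) $c_j r^d \le \mu(B(x,r)) \le C_j r^d$ for $x \in E_j$ and $r \le r_j$; (ii) $\mu \res E_j$ and $\mathcal{H}^d \res E_j$ are comparable; (iii) the approximate tangent plane of $\mu$ at points of $E_j$ varies continuously; (iv) $\sup_{x\in E_j} J^\mu_{2,\alpha}(x) \le M_j$ and the tails $\sum_{k \ge k_0} \beta_2^\mu(x,r_k)^2/r_k^{2\alpha}$ are uniformly small. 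After rescaling so that $E_j \subset B(0,1)$, the set $E_j$ will be $\e$-one-sided Reifenberg flat in the sense of Definition \ref{reifenhole} for any prescribed small $\e$: condition (1) follows from smallness of $\beta_\infty^{E_j}$, controlled by $\beta_2^\mu$ via the upper density bound, while the compatibility conditions (2) follow from the smallness of $\beta_2^\mu$ at small scales combined with the continuity of the tangent planes.

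On each $E_j$, I would then control the set-theoretic $\beta_1^{E_j}$ by the measure-theoretic $\beta_2^\mu$ using Cauchy--Schwarz and the upper regularity:
\begin{equation*}
\beta_1^{E_j}(x,r) \lesssim \frac{1}{r^d}\int_{B(x,r)} \frac{\dist(y,P)}{r}\,d\mu(y) \lesssim \left(\frac{\mu(B(x,r))}{r^d}\right)^{1/2}\beta_2^\mu(x,r) \lesssim \beta_2^\mu(x,r).
\end{equation*}
Summing with the weights $r_k^{-2\alpha}$ yields $\sum_k \beta_1^{E_j}(x,r_k)^2 / r_k^{2\alpha} \lesssim M_j$ uniformly on $E_j$, so Theorem \ref{theoremb} applies and produces a $C^{1,\alpha}$ surface $f_j(\R^d)$ containing $E_j$. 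Taking the union over $j$ covers $\mu$-almost all of $\R^n$, proving the $C^{1,\alpha}$ rectifiability. For $\alpha=1$, the identical scheme with the logarithmic weight $r_k/\log(1/r_k)^\gamma$ delivers a $C^{1,1}$ parametrization of each $E_j$, hence $C^2$ rectifiability by Proposition \ref{c1=c2}.

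The main obstacle is the construction of the pieces $E_j$ in step two, specifically verifying one-sided Reifenberg flatness with uniform parameters. The tilting compatibility between best-approximating planes at adjacent scales and between nearby points is the most delicate point; it requires combining the quantitative smallness of $\beta_2^\mu$ (which by itself only describes a single scale and point) with the uniformity supplied by the Egorov-type reduction and the continuity of the tangent plane, while staying inside a subset on which the lower density bound still holds. Once the pieces $E_j$ are constructed, the remaining steps (the Cauchy--Schwarz comparison of $\beta$'s and the invocation of Theorem \ref{theoremb}) are essentially routine.
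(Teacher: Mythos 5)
Your overall scaffolding matches the paper's: invoke Azzam--Tolsa to get $d$-rectifiability of $\mu$, reduce to pieces where the set-theoretic $\beta_1$ is controlled by the measure-theoretic $\beta_2^\mu$ (your Cauchy--Schwarz step is essentially the paper's Lemma \ref{2implies1} together with the comparison $\mathcal{H}^d\res E_{i,q,N,m}\lesssim\mu$), and then feed those pieces into Theorem \ref{theoremb}. However, the step you single out as ``the main obstacle'' --- producing pieces that are one-sided Reifenberg flat with uniform $\e$ --- is exactly where the paper takes a genuinely different and much shorter route, and you leave that step unfilled.

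The paper does \emph{not} attempt to derive the Reifenberg compatibility conditions directly from smallness of $\beta_2^\mu$ via Lusin/Egorov. Instead, once Azzam--Tolsa gives Lipschitz images $\Gamma_i$, it invokes the classical equivalence of Lipschitz and $C^1$ rectifiability (Federer 3.2.39) to replace each $\Gamma_i$ by countably many $C^1$ images, and then uses Lemma \ref{lipccbp}: on a small enough neighborhood $U_z$ of a full-rank point, the derivative $Dg$ oscillates by at most $\e'$, so the tangent planes $P_{x,r}$ of $g(U_z)$ vary continuously and both requirements of Definition \ref{reifenhole} are satisfied automatically. The compatibility between planes at nearby points and adjacent scales therefore comes for free from the continuity of $Dg$, not from a quantitative $\beta$-estimate. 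This is important because your route would require substantially more work: one cannot bound $\beta_\infty$ linearly by $\beta_2^\mu$ (as a side note, you attribute that control to the upper density bound, but it is the \emph{lower} Ahlfors bound that is needed, and even then one only gets a power-type comparison $\beta_\infty \lesssim \beta_2^{2/(d+2)}$), and deriving the two-point/two-scale plane coherence in Definition \ref{reifenhole}(2) from pointwise smallness of $\beta_2^\mu$ plus ``continuity of approximate tangent planes'' is a nontrivial argument in its own right that you have not supplied. As written, the proposal has a genuine gap at its central step; the paper's $C^1$-image decomposition is precisely the device that closes it.
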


\begin{rk}
The density assumptions in Theorem \ref{rect1} are weaker than the ones in Theorem \ref{rectinfty}, as we will use Theorem \ref{t:ENV} by Edelen, Naber and Valtorta instead of Theorem \ref{azzamtolsa} by Azzam and Tolsa.

Note that the assumption $J^{\mu}_{2,\alpha}(x) < \infty$ implies $J^{\mu}_{1,\alpha}(x)< \infty$ (see Lemma \ref{2implies1}), which is the condition we will need to apply Theorem \ref{theoremb}, and also that $\int_0^1\beta_{\mu,2}(x,r)^2 \frac{dr}{r} < \infty$ which is going to be used to apply a result by Edelen, Naber and Valtorta, \cite{naber} (see Remark \ref{variousbetas} for a more detailed discussion). Also in this case we will use the finiteness of the upper density in Lemma \ref{2implies1}, however, we do not need to assume that as it also follows from Theorem \ref{t:ENV}. % (their definition of rectifiable measure includes that the measure needs to be absolutely continuous to Hausdorff measure).
\end{rk}

%%%%%%%%%%%%%%%%
\subsection{Plan of the paper}
%%%%%%%%%%%%%%%%

Because of the technical nature of the proofs of Theorems \ref{theorema} and \ref{theoremb}, in Section \ref{rect} we first prove Theorems \ref{rectinfty} and \ref{rect1} using Theorems \ref{theorema} and \ref{theoremb}. After that, in Section \ref{prelim} we introduce the main tools for the proof and after we state the more technical Theorems \ref{epsilonk11} and \ref{logtheorem}. Then we construct a parametrization for our set $E$ using a so-called coherent collection of balls and planes (CCBP) to then conclude by proving Theorems \ref{epsilonk11} and \ref{logtheorem}. At the end of the Section, we provide proofs of Theorems \ref{theorema} and \ref{theoremb} stated above. 
Finally, in Section \ref{stuff} we include the a few examples, including the one from \cite{anzellottiserapioni}, together with some remarks on the main Theorems.

%%%%%%%%%%%%%%%%
\subsection{Motivation and related work} \label{motivation}
%%%%%%%%%%%%%%%%

%peter jones and HA
Peter Jones \cite{atst} proved that, given a collection of points in the plane, we can join them with a curve whose length is proportional to a sum of squares of $\beta$-numbers (plus the diameter). In particular, the length is independent of the number of points. This was the starting point of a series of results seeking to characterize, in a quantitative way, which sets are rectifiable. The motivation came from harmonic analysis, more specifically, the study of singular integral operators. It became clear that the classical notion of rectifiability does not capture quantitative aspects of the operators (such as boundedness) and a quantitative notion of rectifiability was needed. A theory of uniform rectifiability was developed and it turned out that uniformly rectifiable sets are the natural framework for the study of $L^2$ boundedness of singular integral operators with an odd kernel (see   \cite{davidsemmes93, davidsemmes91, tolsabook}).
The theory is developed for sets of any dimension, but a necessary condition for a set to be uniformly rectifiable is that it is $d$-Ahlfors regular, where $d \in \N$. That is, the $d$-dimensional Hausdorff measure of a ball is comparable to its radius to the $d$-th power. 

%higher dimension TST
Jones' Traveling Salesman Theorem works only for $1$-dimensional sets, but does not assume any regularity. Several attempts have been made to prove similar analogues for sets (or measures) of dimension more than $1$, see for instance \cite{pajot1,pajot2}. %In   \cite{pajot} a version for $2$-dimensional sets is proved. 
Menger curvature was also introduced to attempt to characterize rectifiability (see, among others, \cite{leger, lerman1, lerman2, polish, blatt, kola2, meurer, othermax, ghinassigoering2019}). Other approaches can be found in   \cite{jessica, delladio, santilli}). 
Azzam and Schul \cite{azzamschul} prove a higher dimensional version of the Traveling Salesman Theorem, that is, they estimate the $d$-dimensional Hausdorff measure of a set using a sum of $\beta$-numbers with no assumptions of Ahlfors regularity. Using this, together with \cite{davidtoro}, Villa \cite{michele} proves a characterization of tangent points of a Jordan curve in term of $\beta$-numbers.

%GKS
We say that a Radon measure $\mu$ on $\R^n$ is $d$-rectifiable if there exist countably many Lipschitz maps $f_i \colon \R^d \to \R^n$ such that
\[
\mu\left(\R^n \setminus \bigcup_i f_i(\R^d)\right)=0.
\]
Note that a set $E$ is $d$-rectifiable if and only if $\mathcal{H}^d\res_E$ is a $d$-rectifiable measure.

For measures which are absolutely continuous with respect to the Hausdorff measure, the above definition coincides which \emph{Lipschitz graphs} rectifiability. That is, if we require the sets to be almost covered by Lipschitz graphs instead of images, we get an equivalent definition. Garnett, Kilip, and Schul   \cite{GKS} proved that this is not true for general measures, even if we require the doubling condition (that is, the measure of balls is quantitatively comparable if we double the radius). They exhibit a doubling measure supported in $\R^2$, singular with respect to Hausdorff measure, which is \emph{Lipschitz image} rectifiable but is not \emph{Lipschitz graph} rectifiable. 

%similar holder stuff
Preiss, Tolsa, and Toro   \cite{PTT} fully describe the H\"{o}lder regularity of doubling measures in $\R^n$ for measures supported on any (integer) dimension. Badger and Vellis \cite{matt} extended part of the work to lower order rectifiable measures.  They prove that the support of a Radon measure can be parametrized by a $\frac{1}{s}$-H\"{o}lder map, under assumptions on the $s$-dimensional lower density. Badger, Naples and Vellis \cite{mattlisavyron} establish sufficient conditions that ensure a set of points is contained in the image of a $\frac{1}{s}$-H\"older continuous map. Badger and Schul \cite{raananmatt1, raananmatt2} characterize $1$-dimensional (Lipschitz) rectifiable measures in terms of positivity of the lower density and finiteness of a Bishop-Jones type square function. Martikainen and Orponen \cite{orponen} later proved that the density hypothesis above is necessary.

Recently, Edelen, Naber, and Valtorta   \cite{naber} proved that, for an $n$-dimensional Radon measure with positive upper density and finite lower density, finiteness of a Bishop-Jones type function involving $\beta_2$ numbers implies rectifiability. The same authors \cite{naber2} study effective Reifenberg theorems for measures in a Hilbert or Banach space.
Azzam and Tolsa \cite{tolsaI, ATII} characterized rectifiability of $n$-dimensional Radon measures using the same Bishop-Jones type function under the assumption that the upper density is positive and finite. Note that the density condition in \cite{naber} is less restrictive (see \cite{tolsaarxiv}). Tolsa \cite{tolsaarxiv} obtains an alternative proof of the result in \cite{naber} using the techniques from \cite{tolsaI, ATII}. For a survey on generalized rectifiability of measures, including classical results and recent advances, see \cite{survey}.

Kolasi\'{n}ski \cite{kola3} provides a sufficient condition in terms of averaged discrete curvatures, similar to integral Menger curvatures, for a Radon measure with positive lower density and finite upper density to be $C^{1,\alpha}$ rectifiable. Moreover, sharpness of the order of rectifiability of the result is obtained using the aforementioned example from \cite{anzellottiserapioni}. This result is very similar in flavor to the result we prove in this paper. In fact, if the measure is Ahlfors regular, Lerman and Whitehouse \cite{lerman1, lerman2} proved that Menger curvature and a Bishop-Jones type square function involving $L^2$ $\beta$-numbers are comparable on balls. However, for measures which are not Ahlfors regular, the two quantities are not known to be comparable.

%my stuff
Given such distinctions it is natural to investigate different types of rectifiability (e.g., \emph{Lipschitz image} and \emph{Lipschitz graph} rectifiability, $C^2$ and $C^{1,\alpha}$ rectifiability). There has been some progress in this direction concerning rectifiability of sets (by e.g.   \cite{anzellottiserapioni}) but the tools involved rely heavily on the Euclidean structure of $\mathcal{H}^d$ and give qualitative conditions. Dorronsoro \cite{dorronsoro1,dorronsoro2} obtains a characterization for potential spaces and Besov spaces in terms of coefficients which are analogous to higher order versions of Jones's $\beta$-numbers. Several recent works concerning connections between rectifiability and $\beta$-numbers seem to have been inspired by these results.
 There has been a great deal of interest in developing tools which allow further generalizations to rectifiability of measures which provide quantitative results. Using the techniques from   \cite{davidtoro} we develop such tools with the use of $\beta$-numbers and obtain results for $C^{1,\alpha}$ rectifiability. 

%parametrization stuff
%The main tool used in our proof is one of several results about parametrization of Reifenberg flat sets. 
Reifenberg \cite{reifen} proves that a ``flat'' set (what is today known as ``Reifenberg flat'' set) can be parametrized by a H\"older map. 
In   \cite{DKT}, David, Kenig, and Toro prove that a $C^{1,\alpha}$ parametrization for Reifenberg flat sets (without holes) with vanishing constants can be achieved under a pointwise condition on the $\beta$'s (their conditions are stronger than our conditions).

Among the results involving Menger curvature, in \cite{polish}, Kolasi\'{n}ski and Szuma\'{n}ska prove that $C^{1,\alpha}$ regularity, with appropriate $\alpha$'s, implies finiteness of functionals closely related to Menger curvature. In \cite{blatt}, Blatt and Kolasi\'{n}ski prove that a compact $C^1$ manifold has finite integral Menger curvatures (a higher dimensional version of Menger curvature) if and only if it can be locally represented by the graph of some Sobolev type map. 

In \cite{kola2}, a bound on Menger curvature together with other regularity assumptions leads to a pointwise bound on $\beta$-numbers: this is the same bound which appears in \cite{DKT}. If in addition the set is \emph{fine}, which among other things implies Reifenberg flatness allowing for small holes (that is, at scale $r$ holes are of the size of $\beta^E_{\infty}(x,r)$), then the same conclusion as in \cite{DKT} holds, that is, the set can be parametrized by a $C^{1,\alpha}$ map.

It is interesting to note that in \cite{DKT} Reifenberg flatness, which does not allow for any holes, is used. On the other hand, in \cite{kola2} they allow small holes, that is, of size  bounded by $\beta$. In contrast, we only require the set to be one-sided Reifenberg flat, which does not impose any restrictions on the size of the holes.

%other stuff
In the last few years, Fefferman, Israel, and Luli   \cite{fefferman} have been investigating Whitney type extension problems for $C^k$ maps, finding conditions to fit smooth functions to data.

 %%%%%%%%%%%%%%%%
\subsection{Further developments}
%%%%%%%%%%%%%%%%

It is interesting to ask whether there exist analogous necessary conditions for higher order rectifiability. See Section \ref{stuff} for some observations. The author believes similar results for $C^{k,\alpha}$ regularity hold with an appropriate generalization of the Jones $\beta$-numbers and of Reifenberg flatness of higher order. By appropriate generalization we mean to use polynomials instead of $d$-planes to approximate the set. This idea is not new, see for instance \cite{dorronsoro1, dorronsoro2} and, more recently, \cite{marti}, Section 2.2.

%%%%%%%%%%%%%%%%
\subsection{Acknowledgements}
%%%%%%%%%%%%%%%%

I am grateful to my advisor Raanan Schul for suggesting the problem and directing me along the way with helpful advice and feedback. I am also thankful for many helpful conversations with Matthew Badger and Mart\'i Prats. Finally, I would like to thank Guy C. David and Jonas Azzam for comments on earlier drafts of this work. %,and the anonymous referees for many helpful suggestions on improving the readability of the paper. 
Some of the work was partially supported by NSF DMS 1361473 and 1763973.

%%%%%%%%%%%%%%%%
%%%%%%%%%%%%%%%%
\section{Proof of Theorems \ref{rectinfty} and \ref{rect1} on \texorpdfstring{$C^{1,\alpha}$}{C1alpha} rectifiability} \label{rect}
%%%%%%%%%%%%%%%%
%%%%%%%%%%%%%%%%

As mentioned in the introduction, we will start by using Theorems \ref{theorema} and \ref{theoremb} to prove Theorems \ref{rectinfty} and \ref{rect1}. The former will be then proved in the later sections.

 %%%%%%%%%%%%%%%%
 \subsection{A sufficient condition involving \texorpdfstring{$\beta_{\infty}$}{Binfty} numbers}
%%%%%%%%%%%%%%%%

To prove Theorem \ref{rectinfty} we need to recall a result from \cite{ATII}. %Recall the $L^2$ version of $\beta$-numbers previously introduced: given $x \in \R^n$ and $r>0$, and an integer $0< d < n$, let
%\begin{equation}
%\beta^{\mu}_{2}(B(x,r))^2=\inf_P  \frac{1}{r^d} \int_{B(x,r)} \left(\frac{\dist(y,P)}{r}\right)^2 d\,\mu(y),
%\end{equation}
%where the infimum is taken over all $d$-planes $P$.

\begin{thm}[J. Azzam, J. Tolsa, Theorem 1.1,  \cite{ATII}] \label{azzamtolsa}
Let $\mu$ be a finite Borel measure in $\R^n$ such that $0<\theta^{d,*}(\mu,x) < \infty$ for $\mu$-a.e. $x \in \R^n$. If 
\begin{equation}
\int_0^1\beta_{\mu,2}(x,r)^2 \frac{dr}{r} < \infty \quad \text{for $\mu$-a.e. $x \in \R^n$,}
\end{equation}
then $\mu$ is $d$-rectifiable.
\end{thm}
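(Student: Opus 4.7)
The plan is to reduce to a stopping-time/corona decomposition where the $\beta_{\mu,2}$ square function controls the measure in an averaged sense on most balls, and then extract big pieces of Lipschitz graphs via a Reifenberg-type construction with holes, adapted from the $\beta_1$-version already recalled as Theorem \ref{dtb1}.

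First I would localize. By inner regularity together with the $\mu$-a.e.\ hypotheses, for any $\eta>0$ there is a compact $F\subseteq \R^n$ with $\mu(\R^n\setminus F)<\eta$ on which the upper density $\theta^{d,\ast}(\mu,\cdot)$ is pinched between two positive constants, and the tail $\int_0^1 \beta_{\mu,2}(x,r)^2\,dr/r$ is uniformly bounded by some $M<\infty$. Since a countable union of $d$-rectifiable measures is $d$-rectifiable, it suffices to show that $\mu\res F$ is $d$-rectifiable and then let $\eta\to 0$.

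Next I would run a stopping-time argument on a dyadic system adapted to $F$. Call a ball $B(x,r)$ \emph{flat} if $\beta_{\mu,2}(x,Cr)<\e$ and the density ratio $\mu(B(x,r))/r^d$ is comparable to its value at the parent scale; otherwise it is a stopping ball. The square-function hypothesis, together with the upper-density bound, forces the stopping balls at each $x\in F$ to contribute only a summable error, so that at $\mu$-a.e.\ $x\in F$ all sufficiently small scales are flat. On a tower of flat balls, the $L^2$-minimizing planes $P(x,r)$ must be close to one another in normalized Hausdorff distance at neighboring scales and nearby centers, yielding exactly the compatibility conditions of Definition \ref{reifenhole}. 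The lower bound on the density then converts the $L^2$ flatness of $\mu$ off $P(x,r)$ into a genuine one-sided Reifenberg flat approximation of $\supp(\mu\res F)$, with the summability $\sum_k \beta^2_{\mu,2}(x,r_k) \lesssim M$ playing the role required by a $\beta_2$-analogue of Theorem \ref{dtb1}. Applying that Reifenberg-with-holes construction yields a bi-Lipschitz image of a $d$-plane covering a definite fraction of $\mu\res F$; iterating on the remainder and invoking a standard exhaustion argument decomposes $\mu\res F$ into countably many Lipschitz images up to a $\mu$-null set.

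The main obstacle is the middle step: the coefficient $\beta_{\mu,2}$ controls the measure off the best plane only in an averaged $L^2$ sense, so converting it into pointwise one-sided Reifenberg flatness of $\supp(\mu)\cap F$ is not automatic. The upper-density bound is needed to prevent $\mu$ from concentrating a comparable mass in a thin annular neighborhood of $P(x,r)$ while leaving $\supp(\mu)$ far from that plane, and the lower density bound is needed to ensure that $\supp(\mu)$ actually fills enough of each flat ball that planes at consecutive scales must stay close. Once these two tensions are resolved via the stopping-time decomposition, the David--Toro machinery (already packaged for us in Theorems \ref{f}--\ref{dtb1}) does the rest.
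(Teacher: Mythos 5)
This statement is quoted in the paper as a black box (Theorem 1.1 of Azzam--Tolsa \cite{ATII}); the paper does not prove it, so there is no ``paper's own proof'' to compare against. Judged on its own merits, your proposal has a genuine gap at precisely the point you flag as the ``main obstacle,'' and that obstacle is not resolvable by the route you describe.

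Two concrete problems. First, the hypothesis of the theorem is only $0<\theta^{d,*}(\mu,x)<\infty$; there is no lower-density assumption at all ($\theta^d_*(\mu,x)$ may be $0$ on a set of positive measure). Your argument repeatedly invokes a lower density bound --- to ``ensure that $\supp(\mu)$ actually fills enough of each flat ball,'' and to turn $L^2$ flatness of $\mu$ off a plane into one-sided Reifenberg flatness of the support --- and neither use is supported by the hypotheses. (This distinction is why Azzam--Tolsa and Edelen--Naber--Valtorta, cited in the paper, are genuinely different theorems with genuinely different proofs; the paper's own Remark after Theorem~\ref{rectinfty} makes exactly this point.) Second, and more fundamentally, ``convert $L^2$ $\beta_2$--control into pointwise one-sided Reifenberg flatness via a stopping-time argument'' is not a step that a corona decomposition by itself accomplishes: a measure can have arbitrarily small $\beta_2$ at every scale while its support wanders far from every affine $d$-plane, because the $L^2$ average is insensitive to a thin subset of $\supp\mu$ of tiny measure. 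The actual proof in \cite{ATII} does not pass through a Reifenberg parametrization of $\supp\mu$ at all. It localizes and builds a corona decomposition as you do, but then uses a variational argument comparing the $\beta_2$ coefficients to Tolsa's $\alpha$-coefficients (distance to flat measures) and, crucially, the Riesz-transform characterization of rectifiability (the Nazarov--Tolsa--Volberg resolution of the David--Semmes problem in codimension one, together with the Eiderman--Nazarov--Volberg estimate). That machinery is what substitutes for the missing pointwise flatness; David--Toro's Theorems~\ref{f}--\ref{dtb1} are not invoked and could not be, since their hypotheses are uniform and pointwise, which is precisely what one cannot extract here. So the plan as written does not close; the step you identify as the crux is the entire content of the Azzam--Tolsa theorem and requires tools outside your outline.
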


In this section we prove Theorem \ref{rectinfty}.

Before proceeding with the proof we want to note when different Jones' square functions are bounded by each other. 
\begin{rk}
Let us formally state a fact well known by experts in the area and often used in the literature. In the literature, some results prefer using continuous versions of Jones' functions, while others prefer discretized ones. In our statementes we use a discrete version, as in \cite{davidtoro}, but we sometimes relate that to continuous versions, as in \cite{ATII}.

Let $a(r)$ be positive for any $r>0$ and suppose there exist constants $c,C>0$ such that $ca(r_{k+1}) \leq a(r) \leq C a(r_k)$ if $r_{k+1} \leq r \leq r_k$. Then there exists a constant $C_0>0$ such that
\begin{equation}
\frac{1}{C_0}\int_0^1 a(r)\,\frac{dr}{r} \leq \sum_{k=0}^{\infty} a(r_k) \leq C_0  \int_0^1 a(r)\,\frac{dr}{r}.
\end{equation}
\end{rk}

Let us record some of the comparisons between different Jones' functions.

\begin{lem} \label{beta2betainfty}
Let $E \subseteq \R^n$ such that $0<\theta^{d*}(E,x)<\infty$, for a.e. $x \in E$. Set $\mu=\mathcal{H}^d\res E$. If for a.e. $x \in E$
\begin{equation}
J^{E}_{\infty}(x)=\sum_{k=0}^{\infty} \beta^E_{\infty}(x,r_k)^2 < \infty,
\end{equation}
then
\begin{equation}
\int_0^1\beta_{\mu,2}(x,r)^2 \frac{dr}{r} < \infty \quad \text{for $\mu$-a.e. $x \in \R^n$,}
\end{equation}
and hence $E$ is $d$-rectifiable, that is, there exist countably many Lipschitz images $\Gamma_i$ such that $\mathcal{H}^d(E \setminus \bigcup_i \Gamma_i)=0$.
\end{lem}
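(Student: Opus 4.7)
The plan is to upgrade the discrete hypothesis on $\beta_\infty^E$ to the continuous $L^2$ condition
\[
\int_0^1 \beta_{\mu,2}(x,r)^2\, \frac{dr}{r} < \infty \qquad \text{for $\mu$-a.e.\ } x,
\]
where $\mu = \mathcal{H}^d \res E$, and then to invoke Theorem \ref{azzamtolsa} directly. The key pointwise inequality is that, for every $d$-plane $P$ (nearly) realizing the infimum in $\beta_\infty^E(x,r)$, we have $\dist(y,P)/r \leq \beta_\infty^E(x,r)$ for every $y \in E \cap B(x,r)$. Since $\mu$ is supported on $E$, using this $P$ as a trial plane in the definition of $\beta_{\mu,2}(x,r)$ gives
\[
\beta_{\mu,2}(x,r)^2 \;\leq\; \beta_\infty^E(x,r)^2 \cdot \frac{\mu(B(x,r))}{r^d}.
\]

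I would then absorb the density ratio using the hypothesis: at $\mu$-a.e.\ $x$, the finiteness of $\theta^{*d}(\mu,x)$ produces some $r_0(x)>0$ and $M(x)<\infty$ with $\mu(B(x,r))/r^d \leq M(x)$ whenever $r \leq r_0(x)$. Thus
\[
\beta_{\mu,2}(x,r)^2 \;\leq\; M(x)\, \beta_\infty^E(x,r)^2 \qquad \text{for } 0 < r \leq r_0(x).
\]

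To pass from the discrete sum to the continuous integral, I would verify the quasi-monotonicity required by the remark preceding the lemma: if $r_{k+1} \leq r \leq r_k$, then inclusion of balls and the optimality of the approximating planes yield $\beta_\infty^E(x,r) \leq 10\,\beta_\infty^E(x,r_k)$ and $\beta_\infty^E(x,r_{k+1}) \leq 10\,\beta_\infty^E(x,r)$. Applying the remark with $a(r) = \beta_\infty^E(x,r)^2$ then gives
\[
\int_0^{r_0(x)} \beta_\infty^E(x,r)^2\, \frac{dr}{r} \;\leq\; C\, J_\infty^E(x) \;<\; \infty,
\]
while the leftover $\int_{r_0(x)}^1 \beta_{\mu,2}(x,r)^2\, dr/r$ is harmless because $\beta_{\mu,2}(x,r)^2 \leq \mu(B(x,r))/r^d$ (take any $d$-plane through $x$), and $\mu$ is locally finite at such $x$. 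Combining everything produces the integral condition, and Theorem \ref{azzamtolsa} (after a standard localization of $\mu$ to a compact set on which it is a finite Borel measure) delivers $d$-rectifiability of $\mu$, i.e.\ of $E$ up to an $\mathcal{H}^d$-null set.

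No real obstacle is expected: the pointwise inequality and quasi-monotonicity are routine, and the only bookkeeping is the passage from the pointwise/locally finite setting to the finite-measure hypothesis of Theorem \ref{azzamtolsa}, which is handled by countable decomposition of $E$ into compacts on which $\mu$ is finite.
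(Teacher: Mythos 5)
Your proposal is correct and follows essentially the same route as the paper: reduce to the pointwise comparison $\beta_{\mu,2}(x,r)^2 \leq \big(\mu(B(x,r))/r^d\big)\,\beta_\infty^E(x,r)^2$ by testing the $\beta_\infty$-optimal plane, bound the density ratio near $\mu$-a.e.\ point using $\theta^{*d}(E,x)<\infty$, and apply Theorem \ref{azzamtolsa}. The discrete-to-continuous conversion and the localization to compacts that you spell out are present only implicitly in the paper (the former via the remark immediately preceding the lemma), so your write-up is a more detailed version of the same argument rather than a genuinely different one.
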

\begin{proof}
We want to prove that, for a.e. $x \in E$, there exists $r_x>0$ such that if $r<r_x$, then
\begin{equation}
    \beta_{\mu,2}(x,r) \leq C(x) \beta^E_{\infty}(x,r).
\end{equation}
It is enough to prove that, for a.e. $x \in E$, there exists $r_x>0$ such that if $r<r_x$, 
\begin{equation}
    \frac{\mathcal{H}^d(B \cap E)}{r^d} \leq C(x).
\end{equation}
This follows immediately by the assumption $\theta^{d *}(E,x)<\infty$.
The conclusion follows from Theorem \ref{azzamtolsa}.
\end{proof}

\begin{rk}
Note that a set $E$ that satisfies the hypotheses of Theorem \ref{rectinfty} satisfies the hypotheses of Lemma \ref{beta2betainfty}, as  $J^{E}_{\infty}(x) \leq J^{E}_{\infty,\alpha}(x)<\infty$.
\end{rk}
 Let us restate, for convenience of the reader, a Sard-type theorem (Theorem 7.6 in \cite{mattila}).
 \begin{thm} \label{sard}
 If $g \colon \R^d \to \R^n$ is a Lipschitz map, then 
 \begin{equation}
     \mathcal{H}^d(\{g(x) \mid \dim_H (Dg(x)\R^d)< d\})=0.
 \end{equation}
 \end{thm}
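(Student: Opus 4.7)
The plan is to reproduce the classical Sard-type argument for Lipschitz maps in equal "source" dimension. First, observe that $g'(x)\R^d$ is a linear subspace of $\R^n$, so its Hausdorff dimension coincides with its vector-space dimension, and the condition $\dim_H(g'(x)\R^d) < d$ is equivalent to $\mathrm{rank}(g'(x)) < d$. By Rademacher's theorem, $g$ is differentiable at $\mathcal{L}^d$-almost every point, and the non-differentiability set, being Lebesgue null, is mapped by the Lipschitz $g$ to an $\mathcal{H}^d$-null set. It therefore suffices to control the image of $E := \{x \in \R^d : g \text{ is differentiable at } x \text{ and } \mathrm{rank}(g'(x)) < d\}$. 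Writing $E = \bigcup_{k=1}^\infty E_k$ with $E_k = E \cap \overline{B}(0, k)$, it is enough to show $\mathcal{H}^d(g(E_k)) = 0$ for each $k$.

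Next, I would fix $\epsilon, \delta > 0$ and, at each $x \in E_k$, choose $r_x > 0$ small enough that $10 \epsilon r_x \leq \delta$ and
\begin{equation*}
|g(y) - g(x) - g'(x)(y - x)| \leq \epsilon |y - x| \qquad \text{for all } y \in B(x, 5 r_x).
\end{equation*}
Since $\mathrm{rank}(g'(x)) \leq d - 1$, the set $g(x) + g'(x)(\overline{B}(0, 5 r_x))$ is contained in an affine subspace of dimension at most $d - 1$ and has diameter at most $10 \, \mathrm{Lip}(g) \, r_x$. Consequently $g(B(x, 5 r_x))$ lies in the $5\epsilon r_x$-tubular neighborhood of such a disc, and that tube can be covered by at most $C(n, d) \, (\mathrm{Lip}(g) / \epsilon)^{d-1}$ balls of radius $5 \epsilon r_x$. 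Summing $d$-th powers of diameters gives
\begin{equation*}
\mathcal{H}^d_\delta \bigl( g(B(x, 5 r_x)) \bigr) \leq C(n, d) \, \mathrm{Lip}(g)^{d-1} \, \epsilon \, r_x^d.
\end{equation*}

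Finally, I would apply the Vitali $5r$-covering lemma to $\{B(x, r_x) : x \in E_k\}$ to extract a countable disjoint subfamily $\{B(x_i, r_i)\}_i$ with $E_k \subseteq \bigcup_i B(x_i, 5 r_i)$. Summing the tube estimate and using disjointness,
\begin{equation*}
\mathcal{H}^d_\delta(g(E_k)) \leq \sum_i C(n, d) \, \mathrm{Lip}(g)^{d-1} \, \epsilon \, r_i^d \leq C'(n, d) \, \mathrm{Lip}(g)^{d-1} \, \epsilon \, \mathcal{L}^d(\overline{B}(0, k+1)),
\end{equation*}
the last step because $\sum_i r_i^d$ is comparable to $\mathcal{L}^d\bigl(\bigcup_i B(x_i, r_i)\bigr) \leq \mathcal{L}^d(\overline{B}(0, k+1))$. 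Letting $\delta \to 0$ and then $\epsilon \to 0$ yields $\mathcal{H}^d(g(E_k)) = 0$, and summing over $k$ finishes the proof. The main technical obstacle is coordinating the three scales $\epsilon$, $r_x$, and $\delta$ so that the pointwise linearization estimate, a valid Vitali extraction, and the $\mathcal{H}^d_\delta$ bookkeeping cooperate simultaneously; this is handled cleanly by fixing $\epsilon$ and $\delta$ first and then taking $r_x$ much smaller than $\delta / \epsilon$.
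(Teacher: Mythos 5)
Your proof is correct. The paper does not actually give a proof of this theorem: it simply cites Theorem 7.6 of Mattila's book and uses it as a black box, so there is no internal argument to compare against. The argument you give is the standard one behind Mattila's proof: partition off the Rademacher non-differentiability set (negligible, and its $g$-image is $\mathcal{H}^d$-null because Lipschitz maps preserve nullity), localize to bounded pieces, at each rank-deficient differentiability point use the pointwise linearization to trap $g(B(x,5r_x))$ in an $\epsilon r_x$-thin tube around a $(d-1)$-dimensional disc of radius $O(\mathrm{Lip}(g)\,r_x)$, count $O((\mathrm{Lip}(g)/\epsilon)^{d-1})$ covering balls of radius $O(\epsilon r_x)$ to get the key factor of $\epsilon$ in the $\mathcal{H}^d_\delta$ estimate, and sum over a Vitali $5r$-selection using disjointness and boundedness of $E_k$. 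The bookkeeping with $\epsilon$, $\delta$, $r_x$ is handled in the right order, and the final $\delta\to 0$, $\epsilon\to 0$ limits are valid because the intermediate bound is uniform in $\delta$. The only cosmetic point worth adding is the line ``and $r_x\leq 1$'' so that the disjoint Vitali balls sit inside $\overline{B}(0,k+1)$, which you implicitly assume in the last inequality; this costs nothing since $r_x$ is already constrained to be small.
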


\begin{lem} \label{lipccbp}
If $f \colon \R^d \to \R^n$ is a Lipschitz map and $\Gamma=\Im(f)$, then $\Gamma \subseteq \Gamma_b \cup \bigcup_q A_q$, where each $A_q$ is Reifenberg flat and $\mathcal{H}^d(\Gamma_b)=0$.
\end{lem}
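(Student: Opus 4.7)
The plan is to reduce the statement to the classical fact that a Lipschitz image is, up to an $\mathcal{H}^d$-null set, a countable union of $C^1$-graphs, and then to cut each such graph into patches small enough to meet Definition \ref{reifenhole}. First, I would define $\Gamma_b$ to consist of (a) the image under $f$ of the set where $f$ fails to be differentiable (a null set in $\R^d$ by Rademacher's theorem, whose image has $\mathcal{H}^d$-measure zero since $f$ is Lipschitz), together with (b) the set $\{f(x) : \dim_H f'(x)\R^d < d\}$, which is $\mathcal{H}^d$-null by Theorem \ref{sard}. After removing $\Gamma_b$, every point of $\Gamma$ is the image of a point of differentiability at which the differential has rank $d$.

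Second, I would invoke a Lusin-type approximation theorem for Lipschitz maps (Federer 3.1.16, or Theorem 3.2.29 cited in the introduction) to decompose $\R^d$ up to a null set into a countable collection of closed sets $K_q$ on each of which $f$ coincides with a $C^1$ map $g_q$. Restricting further by Egorov's theorem and intersecting with Lebesgue points of $|\det Df|$, I can arrange that on each $K_q$ the differential $Dg_q$ is uniformly continuous and uniformly invertible, so $g_q(K_q)$ is a $C^1$-graph patch with a continuously varying $d$-dimensional tangent map $x \mapsto T_x$. By construction $\Gamma \setminus \Gamma_b \subseteq \bigcup_q g_q(K_q)$.

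Third, I would refine each $g_q(K_q)$ into small patches. Fix any $\epsilon > 0$. Since $x \mapsto T_x$ is uniformly continuous on $K_q$, there exists $\rho_{q,\epsilon} > 0$ such that on every ball $B(x_0, \rho_{q,\epsilon})$ centered at a point $x_0 \in g_q(K_q)$ the patch $g_q(K_q) \cap B(x_0, \rho_{q,\epsilon})$ lies within $\epsilon \rho_{q,\epsilon}$ of $T_{x_0}$ and all tangent planes at points in that ball are at angle at most $\epsilon$ from each other. Covering $g_q(K_q)$ by countably many such balls and rescaling each by $\rho_{q,\epsilon}^{-1}$, the resulting pieces $A_{q,j} \subseteq B(0,1)$ satisfy condition (1) at every scale $0 < r \leq 10$ and, thanks to the continuity of the tangent map, also the compatibility condition (2) of Definition \ref{reifenhole}. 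Enumerating the double-indexed family $\{A_{q,j}\}$ as $\{A_q\}$ completes the decomposition. The main obstacle I anticipate is verifying the compatibility condition (2) uniformly across dyadic scales and between distinct points; this is precisely why I pass to $C^1$-Lusin pieces rather than work directly with approximate tangent planes, since on a $C^1$ patch the required modulus of continuity of $x \mapsto T_x$ comes for free from a compactness argument on $K_q$.
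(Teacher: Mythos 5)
Your proposal is correct and follows essentially the same route as the paper: pass to a countable $C^1$ cover of $\Gamma$ (the paper cites Federer 3.2.39, you invoke the Lusin-type approximation that underlies it), discard the $\mathcal{H}^d$-null set where the differential has rank $<d$ via Theorem \ref{sard}, and exploit the continuity of the tangent map on each $C^1$ patch to cut it into small pieces on which both conditions of Definition \ref{reifenhole} hold for a prescribed $\e$. The Egorov and Lebesgue-point refinements in your second step are superfluous once $f$ has been replaced by the $C^1$ maps $g_q$, since $Dg_q$ is automatically continuous and thus locally uniformly continuous, which is all the argument needs.
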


\begin{proof}
By Theorem 3.2.29 in \cite{federer} (Lipschitz and $C^1$ rectifiability are equivalent notions for measures absolutely continuous to Hausdorff measure), we know that there exists countably many $C^1$ maps $g_i \colon \R^d \to \R^n$ such that $\Gamma \subseteq \bigcup_i g_i(\R^d)$. 
To simplify notations, let $g=g_i$, for some $i$, for the time being. For $\mathcal{H}^d$-almost every $z \in \Im(g)$, we know by Theorem \ref{sard} that $\operatorname{rank}(Dg(x))=d$ where $x$ is such that $g(x)=z$. 

Because $g$ is a continuously differentiable map, for any $\e'>0$, we know that there exists a small enough neighborhood $U_z \ni x$ such that $\operatorname{rank}(Dg(y))=d$ and, by the Implicit Function Theorem, $g(U_z)$ is a $C^1$ graph over $U_z$.
\begin{equation}
    |Dg(x)-Dg(y)| < \e'
\end{equation}
for every $y \in U_z$. We want to prove that $g(U_z)$ is Reifenberg flat. 
For any $x \in g(U_z)$ and $r>0$ let $P_{x,r}$ be the unique tangent $d$-plane to $g(U_z)$ at $x$. %(without loss of generality we can assume the $C^1$ images not to self-intersect, as they do only in a measure zero set, that we can include in the bad set $\Gamma_b$). 
We need to check that  
\begin{align*} \numberthis
& \dist(y, P_{x,r}) \leq \e r, \qquad y \in g(U_z) \cap B(x,r), \\
&  \dist(y, g(U_z)) \leq \e r, \qquad y \in P_{x,r} \cap B(x,r).
\end{align*}

By choosing $\e'>0$ above small enough with respect to $\e$, all conditions are satisfied, as the derivative varies smoothly and so do the planes $P_{x,r}$'s, and we recover a two-sided control as locally, we have a $C^1$ graph.

Because the choices of $g_i$ and $z$ are arbitrary we can repeat the same procedure for all the maps. Note we can choose countably many $z_l$ and still obtain a cover for $g_i(\R^d)$. We then have a collection of neighborhoods $U_{z_l}^i$ so that each $g_i(U_{z_l}^i)$ is Reifenberg flat and $\Gamma \subseteq \bigcup_{i,x} g_i(U_{z_l}^i)$ up to $\mathcal{H}^d$ measure zero $\Gamma_b$. Re-indexing the collection by $A_q$, we obtain the desired result.
\end{proof}

We are now ready to prove Theorem \ref{rectinfty}.

\begin{proof}[Proof of Theorem \ref{rectinfty}]
By Lemma \ref{beta2betainfty} there exists countably many Lipschitz images $\Gamma_i$ such that $\mathcal{H}^d(E \setminus \bigcup_i \Gamma_i)=0$. 
Let $E_{i,q}=E\cap (\Gamma_i)_q$, where we applied Lemma \ref{lipccbp} to each $\Gamma_i$ and obtained $A_q=(\Gamma_i)_q$. Now, define 
\begin{equation}
    E_{i,q,p}=\left\{ x \in E_{i,q} \mid J_{\infty,\alpha}(x) \leq p \right\}.
\end{equation}
By Lemma \ref{lipccbp} each of the $E_{i,q,p}$ satisfies the hypotheses of Theorem \ref{theorema} and hence it can be parametrized by a $C^{1,\alpha}$ surface.
Because $E=E_b \cup \bigcup_{i,q,p} E_{i,q,p}$, where $E_b$ has $\mathcal{H}^d$-measure zero, Theorem \ref{rectinfty} follows.
\end{proof}

%%%%%%%%%%%%%%%%
\subsection{A sufficient condition involving \texorpdfstring{$\beta_{2}$}{B1} numbers}
%%%%%%%%%%%%%%%%

We can also state a version of Theorem \ref{rectinfty} for rectifiability of measures, that is Theorem \ref{rect1}. 
If $\mu$ is a Radon measure, and $1\leq p < \infty$, define
\begin{equation}
\beta_p^{\mu}(x,r)=\inf_P\left\{\frac{1}{r^d} \int_{y \in B(x,r)} \left(\frac{\dist(y,P)}{r}\right)^p\,d\mu(y)\right\}^{\frac{1}{p}},
\end{equation}
for $x \in \R^n$ and $r>0$, where the infimum is taken over all $d$-planes $P$.
Moreover, define,
\begin{equation}
J^{\mu}_{p, \alpha}(x)= \sum_{k=0}^{\infty}\frac{\beta^{\mu}_p(x,r_k)^2}{r_k^{2\alpha}}.
\end{equation}

To prove the theorem we will use the following theorem by N. Edelen, A. Naber and D. Valtorta.

\begin{thm}[N. Edelen, A. Naber and D. Valtorta, \cite{naber}] \label{t:ENV}
Let $\mu$ be a finite Borel measure in $\R^n$ such that $0< \theta^{d *}(\mu,x)$ and $\theta_*^d({\mu},x) <\infty$ for $\mu$-a.e. $x \in \R^n$. If 
\begin{equation}
\int_0^1\beta_{\mu,2}(x,r)^2 \frac{dr}{r} < \infty \quad \text{for $\mu$-a.e. $x \in \R^n$,}
\end{equation}
then $\mu$ is $d$-rectifiable and $\theta^{d *}(\mu,x)< \infty$, for $\mu$-a.e. $x \in \R^n$.
\end{thm}

\begin{rk} \label{variousbetas}
Condition \ref{e:rect1} in Theorem \ref{rect1} is slightly stronger than what we actually need. In fact, it implies that $J^{\mu}_{1,\alpha}(x)< \infty$ (see Lemma \ref{2implies1} below). We use the latter condition to apply Theorem \ref{theoremb}. It also implies that $\int_0^1\beta_{\mu,2}(x,r)^2 \frac{dr}{r} < \infty$, which is a necessary hypothesis for applying Theorem \ref{t:ENV}. Notice that assuming only boundedness of the $L^1$ Bishop-Jones square function would not guarantee the set to be rectifiable (see \cite{tolsaarxiv}).

As observed in the introduction, the density assumptions of Theorem \ref{t:ENV} are weaker than the ones in Theorem \ref{azzamtolsa}. Note again that, if $\mu$ is rectifiable then it has $0< \theta_*^d({\mu},x)$ $\mu$-almost everywhere (for a proof, see \cite{raananmatt1}), so the following lemmas apply to $\mu$ in Theorem \ref{rect1}.
We will use the fact that $0< \theta^d_{*}(\mu,x)$ in order to be able to compare $\beta$-numbers computed with respect to $\mu$ and those computed using $\mathcal{H}^d$ and the fact that $\theta^{d *}(\mu,x) < \infty$ to compare $L^1$ and $L^2$ Jones functions.
\end{rk}

\begin{lem} \label{2implies1}
Let $\mu$ be a Radon measure on $\R^n$ and let $x$ such that $\theta^{d *}(\mu,x)<\infty$ and $J^{\mu}_{2,\alpha}(x) < \infty$. Then, $J^{\mu}_{1,\alpha}(x) < \infty$.
\end{lem}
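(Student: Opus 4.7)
The plan is to reduce the lemma to a pointwise inequality comparing $\beta_1^\mu$ with $\beta_2^\mu$ and then control the resulting extra factor using the finite upper density hypothesis.

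First I would apply the Cauchy--Schwarz inequality inside the defining integrals. For any $d$-plane $P$,
\begin{align*}
\frac{1}{r^d}\int_{B(x,r)}\frac{\dist(y,P)}{r}\,d\mu(y)
&\le \frac{1}{r^d}\left(\int_{B(x,r)}\Bigl(\frac{\dist(y,P)}{r}\Bigr)^{2}d\mu\right)^{1/2}\mu(B(x,r))^{1/2} \\
&= \left(\frac{1}{r^d}\int_{B(x,r)}\Bigl(\frac{\dist(y,P)}{r}\Bigr)^{2}d\mu\right)^{1/2}\left(\frac{\mu(B(x,r))}{r^d}\right)^{1/2}.
\end{align*}
Taking the infimum over $P$ on both sides yields
\begin{equation*}
\beta_1^\mu(x,r)\;\le\;\beta_2^\mu(x,r)\left(\frac{\mu(B(x,r))}{r^d}\right)^{1/2}.
\end{equation*}

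Next I would invoke the assumption $\theta^{d*}(\mu,x)<\infty$, which by definition of $\limsup$ produces a threshold $r_x>0$ and a constant $C_x>0$ such that $\mu(B(x,r))/r^d\le C_x$ for every $r<r_x$. Choosing $k_0$ with $r_k<r_x$ for all $k\ge k_0$, the inequality above gives $\beta_1^\mu(x,r_k)^2\le C_x\,\beta_2^\mu(x,r_k)^2$ for $k\ge k_0$. Therefore
\begin{equation*}
J^\mu_{1,\alpha}(x)=\sum_{k=0}^{k_0-1}\frac{\beta_1^\mu(x,r_k)^2}{r_k^{2\alpha}}+\sum_{k=k_0}^{\infty}\frac{\beta_1^\mu(x,r_k)^2}{r_k^{2\alpha}}\;\le\;\sum_{k=0}^{k_0-1}\frac{\beta_1^\mu(x,r_k)^2}{r_k^{2\alpha}}+C_x\,J^\mu_{2,\alpha}(x).
\end{equation*}
The second sum is finite by hypothesis, and each of the $k_0$ initial terms is finite because, trivially, $\beta_1^\mu(x,r_k)\le 2\,\mu(B(x,r_k))/r_k^{d}$ (any plane meeting $B(x,r_k)$ keeps $\dist(y,P)\le 2r_k$) and $\mu$ is Radon, so $\mu(B(x,r_k))<\infty$ for each $k$. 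This gives $J^\mu_{1,\alpha}(x)<\infty$.

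I do not expect a genuine obstacle: the only subtlety is that the bound on $\mu(B(x,r))/r^d$ supplied by finite upper density holds only for sufficiently small $r$, so one must truncate the series at some $k_0$ and treat the first finitely many terms separately. Once this bookkeeping is in place the argument is just Cauchy--Schwarz plus the density bound.
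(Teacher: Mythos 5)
Your proof is correct and follows essentially the same route as the paper: Cauchy--Schwarz (equivalently H\"older) gives the pointwise bound $\beta_1^\mu(x,r)\le\beta_2^\mu(x,r)\bigl(\mu(B(x,r))/r^d\bigr)^{1/2}$, and finiteness of the upper density controls the extra factor for $r$ small. The only difference is that you spell out the truncation at $k_0$ and the trivial bound on the finitely many initial terms, whereas the paper leaves this bookkeeping implicit in the phrase \textquotedblleft there exists $r_x>0$ such that if $r<r_x$\textquotedblright.
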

\begin{proof}
It is enough to prove there exists $r_x >0$ such that if $r<r_x$,
\begin{equation}
    \beta_{\mu,1}(x,r) \leq C(x) \beta_{\mu,2}(x,r).
\end{equation}
By H\"{o}lder's inequality we get
\begin{equation}
    \frac{1}{r^d}\int_{B(x,r)}\frac{d(y,P)}{r}\,d\mu(y) \leq \left(\frac{\mu(B(x,r))}{r^d}\right)^{\frac12}\left(\frac{1}{r^d}\int_{B(x,r)}\left(\frac{d(y,P)}{r}\right)^2\,d\mu(y)\right)^{\frac12}.
\end{equation}
Because $\theta^{d *}(\mu,x)<\infty$, we get $\frac{\mu(B(x,r))}{r^d}\leq C(x)$ and we are done.
\end{proof}

We would like to proceed as in the proof of Theorem \ref{rectinfty}. Because of our assumptions (see Remark \ref{variousbetas}), it follows from Theorem \ref{t:ENV} that $\mu$ is $d$-rectifiable, that is, there exist countably many Lipschitz images $\Gamma_i$ such that $\mu\left(E \setminus \bigcup_i \Gamma_i\right)=0$. 

Let $E=\supp\mu \cap \{x \in \R^n \mid J^{\mu}_{2,\alpha}(x) < \infty\}$, where $\supp\mu=\overline{\{x \in \R^n \mid \mu(B(x,r)>0 \text { for all $r>0$})\}}$ denotes the support of the measure $\mu$. From Lemma \ref{lipccbp} we get that each $E_{i,q}=E \cap (\Gamma_i)_q$ is Reifenberg flat. To apply Theorem \ref{theoremb} we need to ensure that the ``Euclidean'' $\beta_1$ numbers (i.e. the $\beta_1$ numbers computed with respect to the $d$-dimensional Hausdorff measure) satisfy the hypothesis of Theorem \ref{theoremb}.

\begin{lem}
Let $E_{i,q}$ be as above. There exists a countable collections of subsets $E_{i, q, N, m}$ such that for every $x \in E_{i, q, N, m}$ there exist numbers $C_x>0$ and $r_x>0$ such that for every $r_k<r_x$ we have
\begin{equation}
\sum_{\stackrel{k}{r_k<r_x}}\frac{  \beta_1^{E_{i, q, N, m}}(x,r_k)^2}{r_k^{2\alpha}} \leq C_x.
\end{equation}
\end{lem}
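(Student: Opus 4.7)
The plan is to decompose $E_{i,q}$ into countably many pieces on which the Euclidean (i.e. $\mathcal{H}^d$-weighted) $\beta_1$ numbers can be controlled by the $\mu$-weighted ones. First I would apply Lemma~\ref{2implies1} together with Theorem~\ref{azzamtolsa} (justified via Remark~\ref{variousbetas}) to conclude that $\mu$ is $d$-rectifiable. Paired with $\theta^{d*}(\mu,x)<\infty$, this forces $\mu \ll \mathcal{H}^d$ with a Radon-Nikodym density $g = d\mu/d\mathcal{H}^d$ which agrees $\mathcal{H}^d$-a.e.\ on $\supp\mu$ with the pointwise density $\theta^d(\mu,\cdot)$, and which takes values in $(0,\infty)$ at $\mu$-a.e.\ point.

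For $N,m \in \N$, I would define
\[
E_{i,q,N,m} = \Bigl\{\, x \in E_{i,q}:\ \tfrac{1}{N} \leq g(x) \leq N \ \text{and}\ \mu(B(x,r)) \leq 2N r^d \ \text{for all } 0 < r \leq 1/m \,\Bigr\}.
\]
Because $\theta^d(\mu,\cdot)$ exists in $(0,\infty)$ at $\mu$-a.e.\ point and $\theta^{d*}(\mu,\cdot)<\infty$, the family $\{E_{i,q,N,m}\}_{N,m \in \N}$ exhausts $\mu$-a.e.\ point of $E_{i,q}$. The key consequence of the lower density bound $g \geq 1/N$ on $E_{i,q,N,m}$ is that, for every Borel $A \subseteq E_{i,q,N,m}$,
\[
\mathcal{H}^d(A) \leq N \mu(A).
\]

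Now fix $x \in E_{i,q,N,m}$ and set $r_x := 1/m$. For each $r_k < r_x$, let $P$ be a $d$-plane realizing $\beta_1^\mu(x,r_k)$; then
\begin{align*}
\beta_1^{E_{i,q,N,m}}(x,r_k)
&\leq \frac{1}{r_k^d}\int_{E_{i,q,N,m}\cap B(x,r_k)} \frac{\dist(y,P)}{r_k}\,d\mathcal{H}^d(y) \\
&\leq \frac{N}{r_k^d}\int_{B(x,r_k)} \frac{\dist(y,P)}{r_k}\,d\mu(y) \;=\; N\,\beta_1^\mu(x,r_k).
\end{align*}
Squaring, dividing by $r_k^{2\alpha}$, and summing then gives
\[
\sum_{r_k<r_x}\frac{\beta_1^{E_{i,q,N,m}}(x,r_k)^2}{r_k^{2\alpha}} \;\leq\; N^2\, J^\mu_{1,\alpha}(x) \;=:\; C_x,
\]
which is finite by Lemma~\ref{2implies1}.

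The main obstacle is the measure-theoretic setup that precedes the $\beta$-comparison: one must carefully identify the Radon-Nikodym density $g$ with the pointwise density $\theta^d(\mu,\cdot)$ on $\supp\mu$ using classical differentiation theorems for rectifiable Radon measures, and then verify that $\{E_{i,q,N,m}\}$ genuinely covers $\mu$-a.e.\ point of $E_{i,q}$. Once this is in place, the comparison $\beta_1^{E_{i,q,N,m}} \leq N\,\beta_1^\mu$ is essentially a one-line H\"older-type estimate and the square-summability conclusion is immediate.
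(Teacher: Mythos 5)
Your proposal is correct and captures the same core mechanism as the paper's proof: restrict to pieces where $\mu$ has two-sided density bounds, then use the lower density bound to get $\mathcal{H}^d\res_{E_{i,q,N,m}} \leq CN\,\mu$, and transfer this to a pointwise bound $\beta_1^{E_{i,q,N,m}}(x,r) \leq CN\,\beta_1^{\mu}(x,r)$, whence the $\alpha$-weighted square sum is controlled by $J^\mu_{1,\alpha}(x)$. The only real difference in route is the intermediate step: you first establish $\mu\ll\mathcal{H}^d$ and identify the Radon--Nikodym derivative $g$ with the pointwise density $\theta^d(\mu,\cdot)$, and then slice by level sets of $g$, whereas the paper defines $E_{i,q,N,m}$ directly via the density ratio $\mu(B(x,r)\cap E_{i,q})/r^d \in [1/N, N]$ for $r<2^{-m}$ and invokes Theorem 6.9(2) of \cite{mattila} to pass from the lower density bound to the measure comparison $\mathcal{H}^d(E_{i,q,N,m}\cap B)\leq CN\,\mu(E_{i,q}\cap B)$ without ever producing $g$ explicitly. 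The paper's phrasing is a bit more economical because it sidesteps differentiation theory by appealing to a ready-made comparison theorem, while yours makes the underlying absolute continuity explicit; both deliver the same lemma, and your final H\"older-type estimate matches the paper's exactly.
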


\begin{proof}
By our assumptions on $\mu$ we know that for every $x \in E_{i, q}$ there exist numbers $C_x>0$ and $r_x>0$ such that for every $r_k<r_x$ we have
\begin{equation}
\sum_{\stackrel{k}{r_k<r_x}} \frac{  \beta_1^{\mu\res E_{i, q}}(x,r)^2}{r_k^{2\alpha}} \leq C_x.
\end{equation}

Define $E_{i, q, N, m}$ by
\begin{equation}
E_{i, q, N, m}=\left\{x \in E_{i,q} \mid  \frac{1}{N} \leq \frac{\mu(B(x,r) \cap E_{i,q})}{r^d}\leq N \ \text{for $r< 2^{-m}$}\right\}.
\end{equation}

In order to prove the statement it is enough to prove that each $\beta_1^{E_{i, q, N, m}}(x,r)$ is bounded above by a constant multiple of $\beta_1^{\mu\res E_{i, q}}(x,r)$. To obtain this, it is enough to prove that, for some constant $C$, we have 
\begin{equation}
    \mathcal{H}^d(E_{i, q,N,m}\cap B)\leq CN \mu(E_{i,q}\cap B).
\end{equation}
This follows from Theorem 6.9(2) in \cite{mattila}. 
\end{proof}

Finally, define
\begin{equation}
   E_{i,q,N,m,p}=\left\{x \in E_{i,q,N,m} \mid J_{1,\alpha}(x) \leq p\right\}.
\end{equation}
From the results above the following lemma follows immediately.
\begin{lem}
Each $E_{i,q,N,m,p}$ satisfies the hypotheses of Theorem \ref{theoremb} and hence it can be parametrized by a $C^{1,\alpha}$ surface.
\end{lem}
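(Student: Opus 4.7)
The proof is essentially an unpacking of the definitions: the sets $E_{i,q,N,m,p}$ have been designed precisely so that each of the three hypotheses of Theorem~\ref{theoremb} is built in layer by layer (Reifenberg flatness from the $(\Gamma_i)_q$ layer, density bounds from the $N,m$ layer, and a uniform Bishop--Jones bound from the $p$ layer). My plan is to verify these hypotheses in that order.

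First, I would check that $E_{i,q,N,m,p}$ is measurable and one-sided Reifenberg flat. Measurability is immediate since $E_{i,q} = E \cap (\Gamma_i)_q$ is Borel, the density ratio $\mu(B(x,r) \cap E_{i,q})/r^d$ is Borel in $x$ for each $r$, and $J^{\mu}_{1,\alpha}$ is Borel as a countable sum of Borel functions. For one-sided Reifenberg flatness, I would observe that Lemma~\ref{lipccbp} equips $(\Gamma_i)_q$ with a family of planes $P(x,r)$ for $x \in (\Gamma_i)_q$ satisfying both conditions of Definition~\ref{reifenhole}; the same planes can be reused at points of $E_{i,q,N,m,p}$, since condition~(1) only requires \emph{points of the set} to be close to the plane (so passing to a subset only helps), and condition~(2) does not involve the set at all, only the planes. (A harmless localization to balls of radius $1$ and rescaling reduces to the hypothesis $E \subseteq B(0,1)$ in Theorem~\ref{theoremb}.)

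Next, for the uniform Bishop--Jones bound. Fix $x \in E_{i,q,N,m,p}$. For each scale $r_k < 2^{-m}$ the density bound in the definition of $E_{i,q,N,m}$ is in force, so the previous lemma gives
\[
\beta_1^{E_{i,q,N,m}}(x,r_k) \;\leq\; C_N\, \beta_1^{\mu\res E_{i,q}}(x,r_k).
\]
Since integrating a nonnegative function over a smaller set only decreases the integral, $\beta_1^{E_{i,q,N,m,p}}(x,r_k) \leq \beta_1^{E_{i,q,N,m}}(x,r_k)$, and the definition $J^{\mu}_{1,\alpha}(x) \leq p$ controls the remaining sum. Combining,
\[
\sum_{r_k < 2^{-m}} \frac{\beta_1^{E_{i,q,N,m,p}}(x,r_k)^2}{r_k^{2\alpha}} \;\leq\; C_N^{\,2}\, J^{\mu}_{1,\alpha}(x) \;\leq\; C_N^{\,2}\, p,
\]
uniformly in $x$. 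The finitely many large scales $r_k \geq 2^{-m}$ contribute at most a bounded amount because each $\beta_1^{E_{i,q,N,m,p}}(x,r_k) \lesssim 1$ by trivial normalization, and there are only $m+1$ such terms, all with $r_k^{-2\alpha} \leq 2^{2\alpha m}$. Setting $M = M(N,m,p,\alpha)$ to be the resulting constant, Theorem~\ref{theoremb} applies and yields the $C^{1,\alpha}$ parametrization. The $\alpha = 1$ case is identical with the $r_k/\log(1/r_k)^{\gamma}$ replacement carried through the same bounds.

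The only step where anything non-formal happens is the second paragraph — verifying that one-sided Reifenberg flatness survives being intersected with $E$ (and with the density/tameness conditions defining the refinements). I expect this to go through because the one-sided definition is asymmetric: only points of the set need to be near the plane, and that property is manifestly monotone under inclusion. Everything else is bookkeeping on the already-established lemmas.
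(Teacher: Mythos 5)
Your proposal is correct and takes the same route the paper implicitly uses: the paper simply states that the lemma ``follows immediately'' from the preceding results (Lemma~\ref{lipccbp}, the $E_{i,q,N,m}$ density lemma via \cite{mattila} Theorem 6.9(2), and the $J_{1,\alpha}\leq p$ truncation), and your write-up is an honest unpacking of that one-liner: one-sided Reifenberg flatness is inherited under inclusion, the density layer controls $\beta_1$ of the Hausdorff restriction by $\beta_1^{\mu}$ at scales below $2^{-m}$, the $p$-layer makes the Bishop--Jones sum uniform, and the finitely many large scales are handled trivially. No gaps.
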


Now, we have that $E=E_b \cup \bigcup_{i,q,N,m,p} E_{i,q,N,m,p}$, where $E_b$ has $\mathcal{H}^d$-measure zero, by Lemma \ref{lipccbp}, the definition of rectifiability, and continuity from below. The lemma below proves that $E_b$ has also $\mu$ measure zero, so Theorem \ref{rect1} follows.

\begin{lem}
Let $A \subset \R^n$ and $\nu$ a Radon measure such that $\theta^{d *}(\nu,x)<\infty$ for $\nu$-a.e. $x$. If $\mathcal{H}^d(A)=0$, then $\nu(A)=0$.
\end{lem}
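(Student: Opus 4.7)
The plan is to compare $\nu$ with $\mathcal{H}^d$ via a standard density-based upper bound, after slicing $A$ by the (finite) values of the upper density. First I would discard the $\nu$-null set on which the upper density can misbehave: set $N=\{x\in A : \theta^{*d}(\nu,x)=\infty\}$, which by hypothesis satisfies $\nu(N)=0$. It then suffices to show that $\nu(A\setminus N)=0$.

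Next I would stratify: for each integer $k\geq 1$, define
\[
A_k=\{x\in A\setminus N : \theta^{*d}(\nu,x)\leq k\},
\]
so that $A_1\subseteq A_2\subseteq\cdots$ and $\bigcup_k A_k=A\setminus N$. On each $A_k$ the upper density of $\nu$ is uniformly bounded by $k$, which is exactly the hypothesis needed to invoke the classical comparison estimate (Theorem 6.9(3) of Mattila, already cited in this section): whenever $\theta^{*s}(\nu,x)\leq\lambda$ for every $x$ in a set $B$, one has $\nu(B)\leq 2^s\lambda\,\mathcal{H}^s(B)$. Applying this with $s=d$, $\lambda=k$, and $B=A_k$ gives $\nu(A_k)\leq 2^d k\,\mathcal{H}^d(A_k)\leq 2^d k\,\mathcal{H}^d(A)=0$.

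Finally, countable additivity (or subadditivity, using the nested union) yields $\nu(A\setminus N)=\lim_k \nu(A_k)=0$, and combined with $\nu(N)=0$ this gives $\nu(A)=0$, as required.

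There is no real obstacle here beyond pointing out the right citation: the argument is purely a packaging of the standard fact that a Radon measure with finite upper $d$-density is absolutely continuous with respect to $\mathcal{H}^d$ on its support. The only minor subtlety is measurability of $A_k$, which follows because $x\mapsto\theta^{*d}(\nu,x)$ is Borel measurable, so the stratification and the application of the comparison lemma are legitimate.
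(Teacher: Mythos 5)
Your proof is correct and is exactly the standard argument the paper leaves implicit when it says the lemma ``follows immediately from Theorem 6.9(1) in \cite{mattila}'': one stratifies $A$ by the size of $\theta^{*d}(\nu,\cdot)$, applies the density comparison $\nu(B)\leq 2^d\lambda\,\mathcal{H}^d(B)$ on each stratum, and passes to the limit. The only slip is the citation number --- the inequality you quote is part (1) of Mattila's Theorem 6.9, not part (3).
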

The lemma follows immediately from Theorem 6.9(1) in \cite{mattila}. 

%%%%%%%%%%%%%%%%
%%%%%%%%%%%%%%%%
\section{The more technical result on parametrization} \label{prelim}
%%%%%%%%%%%%%%%%
%%%%%%%%%%%%%%%%

We now proceed to introduce the main tools for the proofs of Theorems \ref{theorema} and \ref{theoremb}. In this section, we will construct the map $f$ and obtain distortion estimates for it. We then relate those estimates to our conditions on $\beta$-numbers, and prove the main theorems, Theorems \ref{theorema} and \ref{theoremb}.

%%%%%%%%%%%%%%%%
\subsection{More definitions and statement of the more technical result}
%%%%%%%%%%%%%%%%

Given a one-sided Reifenberg flat set, we now want to construct a so-called \emph{coherent collection of balls and planes} (CCBP) for $E$ (for more details see the discussion after Theorem 12.1 in \cite{davidtoro}).

Let $E$ be as above and set $r_k=10^{-k}$. Choose a maximal collection of points $\{x_{j,k}\} \subset E$, $j \in J_k$ such that $|x_{i,k}-x_{j,k}| \geq r_k$, for $i,j \in J_k$, $i \neq j$. Let $B_{j,k}$ be the ball centered at $x_{j,k}$ with radius $r_k$. For $\lambda > 1$, set 
\begin{equation}
V_k^{\lambda}=\bigcup_{j \in J_k} \lambda B_{j,k}.
\end{equation}

Because of our assumptions on the set $E$ we can assume that the initial points $\{x_{j,0}\}$ are close to a $d$-plane $\Sigma_0$, that is $\dist(x_{j,0}, \Sigma_0) \leq \e$, for $j \in J_0$.
Moreover, for each $k \geq 0$ and $j \in J_k$ we assume that there exists a $d$ plane $P_{j,k}$ through $x_{j,k}$ such that
\begin{align} 
 d_{x_{j,k}, 100r_k}(P_{i,k}, P_{j,k}) & \leq \e \text{ for $k \geq 0$ and $i,j \in J_k$ such that $|x_{i,k}-x_{j,k}| \leq 100r_k$,}  \label{ccbp1} \\
 d_{x_{i,0},100}(P_{i,0},\Sigma_0) & \leq \e \text{ for $i \in J_0$,} \\
 d_{x_{i,k},20r_k}(P_{i,k},P_{j,k+1})&  \leq \e \text{ for $k \geq 0$, $i \in J_k$ and $j \in J_{k+1}$ s.t. $|x_{i,k}-x_{j,k+1}| \leq 2r_k$.}
\end{align}

\begin{defin} \label{defccbp}
A \emph{coherent collection of balls and planes} for $E$ is a pair $(B_{j,k},P_{j,k})$ with the properties above. We assume that $\e>0$ is small enough, depending on $d$ and $n$.
\end{defin}

We will use this collection to construct the parametrization, as explained in the following section. 
We now define the coefficients $\e_k$ which differ from classic $\beta$-numbers in that they take into account neighbouring points at nearby scales. In section \ref{improv} the relationship between the two will be made explicit.
\begin{defin} \label{e_k}
For $k \geq 1$ and $y \in V^{10}_k$ define
\begin{equation}
 \e_k(y)=\sup\{ d_{x_{i,l},100r_l}(P_{j,k},P_{i,l}) \mid j \in J_k, \ l \in \{k-1,k\}, i \in J_l \  \ y \in 10B_{j,k} \cap11B_{i,k}\}
\end{equation}
and $ \e_k(y)=0$, for $y \in \R^n \setminus V_k^{10}$.
\end{defin}

As in \cite{davidtoro} $f$ will be constructed as a limit. To construct the sequence we need a smooth partition of unity subordinate to $\{B_{j,k}\}$.
Following the construction in Chapter 3 of \cite{davidtoro}, we can obtain functions $\theta_{j,k}(y)$ and $\psi_k(y)$ such that each $\theta_{j,k}$ is nonnegative and compactly supported in $10B_{j,k}$, and $\psi_k(y)=0$ on $V_k^8$. Moreover we have, for every $y \in \R^n$,
\begin{equation} 
\psi_k(y) + \sum_{j \in J_k} \theta_{j,k}(y)\equiv 1.
\end{equation}
Note that, because $\psi_k(y)=0$ on $V_k^8$, this means that 
\begin{equation} \label{partition}
\sum_{j \in J_k} \theta_{j,k}(y)\equiv 1, \quad \text{for every $y \in V_k^8$}.
\end{equation}
Finally we have that
\begin{equation}
|\nabla^m\theta_{j,k}(y)| \leq C_m\frac{1}{r_k^m},  \qquad |\nabla^m\psi_k(y)| \leq C_m\frac{1}{r_k^m}.
\end{equation}
 
Following \cite{davidtoro}, our plan is to define a map $f$ on a $d$-plane $\Sigma_0$. We define $f \colon \R^n \to \R^n$ and later on we will only care about its values on $\Sigma_0$. With a slight abuse of notation we will still denote the restricted map to $\Sigma_0$ as $f$. We define the sequence $\{f_k \colon \R^n \to \R^n\}$ inductively by
\begin{equation}
f_0(y)=y \qquad \text{and} \qquad f_{k+1}=\sigma_k \circ f_k,
\end{equation}
where
\begin{equation} \label{sigma}
\sigma_k(y)= \psi_k(y)y + \sum_{j \in J_k} \theta_{j,k}(y)\pi_{j,k}(y).
\end{equation}
where $\pi_{j,k}$ denotes the orthogonal projection from $\R^n$ to $P_{j,k}$. In the future we denote by $\pi^{\perp}_{j,k}$ the projection onto the $(n-d)$-plane perpendicular to $P_{j,k}$ (passing through the origin). 
Next, we observe that the $f_k$'s converge to a continuous map $f$. We include below the proof of this fact from \cite{davidtoro}.
Note that 
\begin{equation}
|\sigma_k(y)-y| \leq 10r_k \quad \text{for $y \in \R^n$}
\end{equation}
because $\sum_{j \in J_k}\theta_{j,k}(y) \leq 1$ and $|\pi_{j,k}(y)-y| \leq 10r_k$ when $\theta_{j,k}(y)\neq 0$ ($\theta_{j,k}$ is compactly supported in $10B_{j,k}$, so that means $y \in 10B_{j,k}$. This implies that
\begin{equation}
    \|f_{k+1}-f_k\|_{\infty} \leq 10r_k
\end{equation}
so that the maps $f_k$'s converge uniformly on $\R^n$ to a continuous map $f$.

\begin{thm} [G. David, T. Toro,  Proposition 8.3 \cite{davidtoro}] \label{ftdt}
Let $\e >0$ and $E$ as above. If we also assume that there exists $M >0$ such that
\begin{equation}
\sum_{k=0}^{\infty}  \e_k(f_k(z))^2 \leq M, \qquad \text{ for all $z \in \Sigma_0$}.
\end{equation}
then the map $f \colon \Sigma_0 \to \Sigma$ constructed in Theorem \ref{f} is bi-Lipschitz. Moreover the Lipschitz constants depend only on $n$, $d$, and $M$.
\end{thm}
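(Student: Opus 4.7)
The plan is to show that each transition map $\sigma_k$ is a bi-Lipschitz perturbation of the identity with multiplicative distortion bounded by $1 + C\e_k(y)^2$ at scale $r_k$, and then to compose these estimates across all $k$. Square summability $\sum_k \e_k(f_k(z))^2 \leq M$ turns the telescoping product $\prod_k (1 + C\e_k^2)$ into a uniform bound $\exp(CM)$, yielding bi-Lipschitz constants for $f$ depending only on $n$, $d$, and $M$.

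The first step is to analyze $\sigma_k$ on $V_k^8$. Writing $\pi_{j,k}(y) = y - \pi_{j,k}^{\perp}(y - x_{j,k})$ and using (\ref{partition}),
\[
\sigma_k(y) - y = -\sum_{j \in J_k} \theta_{j,k}(y)\, \pi_{j,k}^{\perp}(y - x_{j,k}).
\]
Fix a reference index $j_0 \in J_k$ with $y \in 10 B_{j_0,k}$. Any other $j$ contributing at $y$ satisfies $|x_{j,k} - x_{j_0,k}| \lesssim r_k$, so compatibility (\ref{ccbp1}) yields $\|\pi_{j,k}^{\perp} - \pi_{j_0,k}^{\perp}\| \lesssim \e_k(y)$ as operators and the pointwise bound $|\pi_{j,k}^{\perp}(y - x_{j,k}) - \pi_{j_0,k}^{\perp}(y - x_{j_0,k})| \lesssim \e_k(y)\, r_k$. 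Differentiating $\sigma_k$ at $y$ and using $\sum_j \nabla\theta_{j,k}(y) \equiv 0$ on $V_k^8$,
\[
D\sigma_k(y) = (I - \pi_{j_0,k}^{\perp}) - \sum_j \theta_{j,k}(y)\bigl(\pi_{j,k}^{\perp} - \pi_{j_0,k}^{\perp}\bigr) - \sum_j \nabla\theta_{j,k}(y) \otimes \bigl(\pi_{j,k}^{\perp}(y - x_{j,k}) - \pi_{j_0,k}^{\perp}(y - x_{j_0,k})\bigr),
\]
and each correction sum has operator norm $\lesssim \e_k(y)$. The essential refinement is that upon decomposing along $P_{j_0,k}$ and its normal, the tangential part $\pi_{j_0,k} \circ D\sigma_k(y)|_{P_{j_0,k}}$ differs from the identity by $O(\e_k(y)^2)$: the first-order correction $\pi_{j_0,k}\circ(\pi_{j_0,k}^{\perp} - \pi_{j,k}^{\perp})|_{P_{j_0,k}}$ vanishes because, for two $d$-planes at angle $\theta$, the composition of the projection onto the normal of one with projection back onto the tangent of the other is $O(\theta^2)$.

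Iterating, for $z, z' \in \Sigma_0$ with $|z - z'| \in [r_{k_0+1}, r_{k_0}]$ and $k \geq k_0$, choosing a reference plane $P_{j_k,k}$ well-placed with respect to the pair $(f_k(z), f_k(z'))$ gives
\[
|f_{k+1}(z) - f_{k+1}(z')| = |f_k(z) - f_k(z')|\bigl(1 + O(\e_k(f_k(z))^2)\bigr).
\]
For $k < k_0$ the uniform bi-H\"older control from Theorem~\ref{f} provides a crude $|f_k(z) - f_k(z')| \asymp |z - z'|$. Multiplying across $k$ and invoking the hypothesis,
\[
\frac{|f(z) - f(z')|}{|z - z'|} \in \bigl[\exp(-CM),\, \exp(CM)\bigr],
\]
which is the claimed bi-Lipschitz estimate.

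The main obstacle is establishing the quadratic (rather than linear) dependence of the tangential distortion on $\e_k$. A purely linear bound would only give the product $\prod_k(1 + C\e_k)$, which is finite only under the stronger hypothesis $\sum_k \e_k < \infty$; extracting $\e_k^2$ requires exploiting that the composition of nearby orthogonal projections is second-order close to a genuine orthogonal projection, and carefully handling the cancellations arising from the partition of unity identity $\sum_j \nabla\theta_{j,k} \equiv 0$ together with the multi-scale compatibility (\ref{ccbp1}) between overlapping planes. This quadratic gain is what converts the $\ell^2$ hypothesis on $\e_k$ into the $\ell^\infty$ conclusion for the bi-Lipschitz constant.
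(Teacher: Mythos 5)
The paper itself does not prove this theorem (it is quoted as David--Toro's Proposition~8.3), but the ingredients it later cites -- (\ref{e2sigma}), (\ref{lemma73}), (\ref{74proof}), Lemma~\ref{tgderivatives}, and the product bounds~(\ref{8.10}), (\ref{8.22}) -- make the intended argument clear, and your proposal identifies the decisive mechanism correctly: on $V_k^8$ the variation $\sigma_k-\mathrm{id}$ collapses, via $\sum_j\theta_{j,k}\equiv1$ and $\sum_j D\theta_{j,k}\equiv0$, to differences of projections onto nearby planes, and the composition $D\pi_{i,k}\circ(D\pi_{j,k}-D\pi_{i,k})\circ D\pi_{i,k}$ is second-order small in the angle, so the length distortion of $D\sigma_k$ along $T\Sigma_k$ is only $1+O(\e_k^2)$. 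That is exactly Lemma~\ref{lem74}, and your decomposition of $D\sigma_k$ and your geometric explanation of the quadratic gain are essentially right.

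The step that does not hold up is the aggregation across scales. You assert that for $k\ge k_0$,
\begin{equation*}
|f_{k+1}(z)-f_{k+1}(z')|=|f_k(z)-f_k(z')|\bigl(1+O(\e_k(f_k(z))^2)\bigr),
\end{equation*}
but once $k>k_0$ the points $f_k(z)$, $f_k(z')$ are at distance $\gtrsim r_{k_0}\gg r_k$ and live in many disjoint balls $10B_{j,k}$; there is then no single well-placed plane $P_{j_k,k}$, and a single coefficient $\e_k(f_k(z))$ does not control the distortion along the whole segment between them. David--Toro avoid this by integrating the derivative along a curve $\gamma\subset\Sigma_0$: write $f_m(z')-f_m(z)=\int_0^1 Df_m(\gamma(t))\gamma'(t)\,dt$, apply~(\ref{e2sigma}) at each intermediate point $z_k=f_k(\gamma(t))$ to obtain the pointwise bound~(\ref{8.10}), and then use that the hypothesis $\sum_k\e_k(f_k(w))^2\le M$ holds for \emph{every} $w\in\Sigma_0$, not only $w=z$, to bound the product uniformly in $t$ by $e^{CM}$; the lower bound comes from the analogous estimate~(\ref{8.22}) for $Df_m^{-1}$, not from dividing distances scale by scale. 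Separately, your ``crude'' $|f_k(z)-f_k(z')|\asymp|z-z'|$ for $k<k_0$ is not available from bi-H\"older control alone: the bi-Lipschitz constant of $f_k$ degrades like $(1+C\e)^k$, so at $k\asymp\log(1/|z-z'|)$ this is off by a power of $|z-z'|$. The fix for both issues is the same: apply the quadratic distortion at every scale, integrated along a path, rather than a scale-by-scale product of distance increments at the endpoints.
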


As mentioned before, we are interested in finding a condition on the $ \e_k$'s to improve the results on the map $f$. The theorems we want to prove are the following.

\begin{thm} \label{epsilonk11}
Let $E \subseteq B(0,1)$ as above, with $\e>0$ small enough, and $\alpha \in (0,1)$. Also assume that there exists $M >0$ such that
\begin{equation} \label{beta2r_alpha}
\sum_{k=0}^{\infty} \frac{ \e_k(f_k(z))^2}{r_k^{2\alpha}} \leq M, \qquad \text{ for all $z \in \Sigma_0$}.
\end{equation}
Then the map $f \colon \Sigma_0 \to \Sigma$ constructed in Theorem \ref{f} is invertible and differentiable, and both $f$ and its inverse have $\alpha$-H\"older directional derivatives. In particular, $f$ is continuously differentiable. Moreover the H\"older constants depend only on $n$, $d$, and $M$.
\end{thm}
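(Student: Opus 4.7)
The strategy is to upgrade the bi-Lipschitz argument underlying Theorem \ref{ftdt} to a $C^{1,\alpha}$ one by quantifying the rate at which $Df_k$ converges. Writing $A_k(z) := Df_k(z)$, the chain rule applied to $f_{k+1} = \sigma_k \circ f_k$ gives $A_{k+1}(z) = D\sigma_k(f_k(z))\,A_k(z)$. The proof rests on two quantitative estimates on the building blocks $\sigma_k$, which I would establish first: (E1) $|A_{k+1}(z)v - A_k(z)v| \leq C\,\e_k(f_k(z))\,|v|$ for $v\in\Sigma_0$, and (E2) $|D\sigma_k(y) - D\sigma_k(y')| \leq C(\e_k/r_k)\,|y-y'|$ for nearby $y,y'$ in $V_k^{10}$. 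Both come from differentiating (\ref{sigma}) once (respectively twice) and using the partition-of-unity identities $\sum_j \theta_{j,k}\equiv 1$, $\sum_j\nabla\theta_{j,k}\equiv 0$ on $V_k^8$ to subtract a reference projection $\pi_{j_0,k}$; the CCBP bound (\ref{ccbp1}) then controls $\pi_{j,k}-\pi_{j_0,k}$ by $C\e_k r_k$ and $D\pi_{j,k}-D\pi_{j_0,k}$ by $C\e_k$ across overlapping balls, producing the $\e_k$ factor. For (E1) one also uses that $A_k(z)v$ is nearly tangent to $P_{j_0,k}$, so that $D\sigma_k(f_k(z))$ acts nearly as the identity on $\Im(A_k(z))$.

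Once (E1) and (E2) are in hand, Cauchy--Schwarz applied to the hypothesis (\ref{beta2r_alpha}) (using $\alpha>0$) gives
\[
\sum_{k=0}^{\infty}\e_k(f_k(z)) \leq M^{1/2}\left(\sum_{k=0}^{\infty} r_k^{2\alpha}\right)^{1/2} < \infty,
\]
so by (E1), $\{A_k(z)\}$ is Cauchy uniformly in $z$. Combined with the uniform convergence $f_k\to f$ already in place, this promotes $f$ to a continuously differentiable map with $Df(z) = A(z) := \lim_k A_k(z)$. The same Cauchy--Schwarz, applied to the tail, yields
\[
|A(z)-A_k(z)| \leq C\sum_{j\geq k}\e_j(f_j(z)) \leq CM^{1/2}\,r_k^{\alpha}.
\]

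To prove $Df$ is $\alpha$-H\"older, for $z\neq z'$ I would pick $k_0$ with $r_{k_0+1}\leq |z-z'|\leq r_{k_0}$ and split $A(z)-A(z') = [A(z)-A_{k_0}(z)] + [A_{k_0}(z)-A_{k_0}(z')] + [A_{k_0}(z')-A(z')]$. The outer terms are each $\leq Cr_{k_0}^{\alpha} \leq C|z-z'|^{\alpha}$ by the tail estimate. For the middle, I expand $A_{k_0}$ as a product and telescope:
\[
A_{k_0}(z)-A_{k_0}(z') = \sum_{j<k_0}\Pi_{>j}(z)\bigl[D\sigma_j(f_j(z)) - D\sigma_j(f_j(z'))\bigr]A_j(z'),
\]
where $\Pi_{>j}(z) := D\sigma_{k_0-1}(f_{k_0-1}(z))\cdots D\sigma_{j+1}(f_{j+1}(z))$. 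Since $|\Pi_{>j}(z)|$ and $|A_j(z')|$ are uniformly bounded by Theorem \ref{ftdt}, and $|f_j(z)-f_j(z')|\leq C|z-z'|$ by the same, applying (E2) and a third Cauchy--Schwarz gives
\[
|A_{k_0}(z)-A_{k_0}(z')| \leq C|z-z'|\sum_{j<k_0}\frac{\e_j(f_j(\cdot))}{r_j} \leq CM^{1/2}|z-z'|\left(\sum_{j<k_0} r_j^{2\alpha-2}\right)^{1/2} \leq Cr_{k_0}^{\alpha-1}|z-z'|,
\]
since $\sum_{j<k_0} r_j^{2\alpha-2}\asymp r_{k_0}^{2\alpha-2}$ (geometric with ratio $10^{2-2\alpha}>1$ because $\alpha<1$). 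As $|z-z'|\asymp r_{k_0}$, this is at most $C|z-z'|^{\alpha}$, giving $|Df(z)-Df(z')|\leq C|z-z'|^{\alpha}$.

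Invertibility and H\"older regularity of $f^{-1}$ then follow: Theorem \ref{ftdt} (whose hypothesis is weaker than ours) already makes $f$ bi-Lipschitz, so $Df(z)|_{\Sigma_0}$ is uniformly injective onto its image; inversion on that image is Lipschitz in the ambient entries, and composing with the Lipschitz inverse $f^{-1}$ transfers the $\alpha$-H\"older estimate to $Df^{-1}$. The main obstacle is estimate (E2): extracting the full factor $\e_k/r_k$ rather than the trivial $1/r_k$ demands second-order cancellation via $\sum_j\nabla^2\theta_{j,k}\equiv 0$ and careful handling of the mixed terms from $\nabla^2\sigma_k$. Estimate (E1) is also delicate, as it hinges on the geometric fact that $A_k(z)v$ stays close to the active plane $P_{j_0,k}$; this is proved in tandem with (E1) by an induction mirroring David--Toro's bi-Lipschitz argument.
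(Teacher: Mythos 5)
Your proposal is essentially correct and follows the same strategy as the paper's proof. The two key estimates you isolate -- the step bound (E1) coming from Lemma \ref{lem74} (specifically, from $|D\sigma_k(y)\cdot v - v|\leq C\e_k|v|$ for tangent $v$), and the second-order bound (E2) coming from Lemma \ref{lem1} ($|D^2\sigma_k(y)|\leq C\e_k(y)/r_k$ on $\Sigma_k\cap V_k^8$) -- are precisely the two estimates the paper feeds into its argument, and your applications of Cauchy--Schwarz to (\ref{beta2r_alpha}) yield the same exponents $A=10^{1-\alpha}$ and $B=10^{\alpha}$ (so $\eta=\alpha$) as in the paper. The presentational differences are minor: the paper iterates $A_m = D^2f_m(\gamma(t))\cdot\gamma'(t)$ via the recursion (\ref{am}) and integrates along a segment, while you telescope the product $Df_{k_0}=D\sigma_{k_0-1}\cdots D\sigma_0$ directly -- these are the same computation written in differential versus discrete form. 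Likewise you unfold the content of Lemma \ref{stein} (the three-term split at the scale $r_{k_0}\asymp |z-z'|$) rather than citing it; and for $Df^{-1}$ you observe that $\alpha$-H\"older regularity transfers through the (uniformly nondegenerate) inversion $Df^{-1}= (Df\circ f^{-1})^{-1}$, which is a cleaner shortcut than the paper's route of redoing the full telescoping with $D\sigma_k^{-1}$ via (\ref{oneinverse}), (\ref{twoinverse}), and (\ref{8.22}); just make the pseudoinverse Lipschitz-in-entries claim explicit since $Df(z)$ maps the fixed plane $\Sigma_0$ into the varying plane $T\Sigma(f(z))$. One point to flag in both your write-up and the paper's: (E2)/Lemma \ref{lem1} is only valid where $\psi_k=0$ (i.e.\ on $V_k^8$); this is harmless since for $z\in\Sigma_0$ the images $f_k(z)$ that lie in $V_k^{10}$ in fact lie in $V_k^8$ by the Reifenberg structure, but worth noting rather than stating (E2) on all of $V_k^{10}$.
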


\begin{rk}
We will define $f \colon \R^n \to \R^n$ but we are only interested in its values on $\Sigma_0$ and $\Sigma=f(\Sigma_0)$. The directional derivatives for the inverse are derivatives along directions tangent to $\Sigma$.
\end{rk}

\begin{thm} \label{logtheorem}
Let $E \subseteq B(0,1)$ as above, with $\e>0$ small enough and let $\eta(r_k)^2$ satisfy the Dini condition. Also assume that there exists $M >0$ such that
\begin{equation} \label{beta2r}
\sum_{k=0}^{\infty} \left( \frac{\e_k(f_k(z))}{r_k\eta(r_k)}\right)^2 \leq M, \qquad \text{ for all $z \in \Sigma_0$}.
\end{equation}
Then the map $f \colon \Sigma_0 \to \Sigma$ constructed in Theorem \ref{f} is invertible and differentiable, and both $f$ and its inverse have Lipschitz directional derivatives. In particular, $f$ is continuously differentiable. Moreover the Lipschitz constants depend only on $n$, $d$, and $M$.
\end{thm}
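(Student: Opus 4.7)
The plan is to follow the scheme of the proof of Theorem \ref{epsilonk11} essentially verbatim and replace the two key Hölder-type bookkeeping inequalities by their logarithmically-weighted analogues. The map $f$ and the auxiliary sequence $f_k$ are already defined; from the construction of $\sigma_k$ in \eqref{sigma} one extracts the same pair of pointwise estimates that drove the $C^{1,\alpha}$ argument, namely
\begin{equation*}
\|D\sigma_k - \Id\|_{L^\infty(V_k^{9})} \leq C\,\e_k(y), \qquad \|D^2\sigma_k\|_{L^\infty(V_k^{9})} \leq C\,\frac{\e_k(y)}{r_k},
\end{equation*}
which, composed along the tower $f_{k+1} = \sigma_k \circ f_k$ (and combined with the bi-Lipschitz bound of Theorem \ref{ftdt} that is already available under \eqref{beta2r} since it implies the hypothesis of that theorem), yield
\begin{equation*}
\|Df_{k+1}(z) - Df_k(z)\| \leq C\,\e_k(f_k(z)), \qquad \|Df_{k+1}(z) - Df_{k+1}(z')\| \leq C\,\frac{\e_k(f_k(z))}{r_k}\,|z-z'|,
\end{equation*}
uniformly in $z,z' \in \Sigma_0$ with $|z-z'| \leq r_k$. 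These are the same building blocks as in Theorem \ref{epsilonk11}; only the way we sum them changes.

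Next I would fix $z,z' \in \Sigma_0$, let $K$ be the integer with $r_{K+1} \leq |z-z'| < r_K$, and split the telescoping identity $Df(z) - Df(z') = \sum_{k=0}^{K}\bigl(Df_{k+1}(z) - Df_{k+1}(z')\bigr) \text{-differences} + \sum_{k>K}\bigl(\cdots\bigr)$ in the usual way. On the large-scale part $k \leq K$ one uses the second estimate to obtain
\begin{equation*}
\sum_{k=0}^{K} \|Df_{k+1}(z) - Df_{k+1}(z')\| \leq C\,|z-z'| \sum_{k=0}^{K}\frac{\e_k(f_k(z))}{r_k},
\end{equation*}
and Cauchy--Schwarz against the weight $\log(1/r_k)^{\gamma}$ gives
\begin{equation*}
\sum_{k=0}^{K}\frac{\e_k(f_k(z))}{r_k} \leq M^{1/2}\Bigl(\sum_{k=0}^{K}\frac{1}{\log(1/r_k)^{2\gamma}}\Bigr)^{1/2} \leq C(\gamma)\,M^{1/2},
\end{equation*}
where the last sum is bounded by a constant precisely because $2\gamma > 1$ and $\log(1/r_k)\asymp k$. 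On the small-scale part $k > K$ I apply the first estimate to each tail and Cauchy--Schwarz again:
\begin{equation*}
\sum_{k>K} \e_k(f_k(z)) \leq M^{1/2}\Bigl(\sum_{k>K}\frac{r_k^{2}}{\log(1/r_k)^{2\gamma}}\Bigr)^{1/2} \leq C\,\frac{r_K}{\log(1/r_K)^{\gamma}} \leq C\,|z-z'|,
\end{equation*}
and the same applies after replacing $z$ by $z'$. Adding the two pieces yields $\|Df(z) - Df(z')\| \leq C(M,\gamma,n,d)\,|z-z'|$, i.e.\ $f$ is $C^{1,1}$.

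For the inverse $f^{-1}:\Sigma\to\Sigma_0$, bi-Lipschitzness from Theorem \ref{ftdt} lets us transport $|z-z'|$ to $|f(z)-f(z')|$ (up to a constant), and the formula $D(f^{-1}) = (Df)^{-1}\circ f^{-1}$ together with the already-established Lipschitz bound on $Df$ (hence on $(Df)^{-1}$, since $Df$ stays close to a fixed invertible linear map along $\Sigma_0$) converts the $C^{1,1}$ bound on $f$ into a $C^{1,1}$ bound on $f^{-1}$ by the chain rule. The main obstacle in the whole argument is the second display above: it is exactly at this endpoint $\alpha = 1$ that one needs the extra logarithmic room in the hypothesis to make Cauchy--Schwarz produce a finite constant, and the exponent $\gamma > \tfrac12$ is optimal for this step because $\sum_k k^{-2\gamma}$ converges iff $2\gamma > 1$. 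Everything else is a straightforward repetition of the machinery already developed for the H\"older regime of Theorem \ref{epsilonk11}.
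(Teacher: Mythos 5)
Your proposal is essentially correct and rests on the same two ingredients as the paper: the accumulated Lipschitz bound on $Df_m$ (equation (\ref{lipbound}) in the paper, which says $|Df_m(x)-Df_m(y)|\le C\sum_{k=0}^m \e_k(z_k)/r_k\,|x-y|$), and the Cauchy--Schwarz inequality against the logarithmic weight, which is precisely where $\gamma>\tfrac12$ enters through the convergence of $\sum_k k^{-2\gamma}$. However, you import the large-scale/small-scale Stein-type decomposition (Lemma~\ref{stein}) from the $\alpha<1$ case, and at this endpoint that split is superfluous: as you yourself notice, your ``large-scale'' Cauchy--Schwarz estimate $\sum_{k=0}^K \e_k(f_k(z))/r_k \le C(\gamma)M^{1/2}$ is independent of $K$, so one can simply take $K\to\infty$ and conclude a uniform Lipschitz bound on $Df_m$ directly from (\ref{lipbound}), which is exactly how the paper argues. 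Since $\sum_k \e_k<\infty$ guarantees that $Df_m\to Df$ uniformly, the ``small-scale'' estimate $\sum_{k>K}\e_k\lesssim|z-z'|$ is unnecessary machinery. Note also that your intermediate display $\sum_{k=0}^K\|Df_{k+1}(z)-Df_{k+1}(z')\|\le C|z-z'|\sum_{k=0}^K \e_k(f_k(z))/r_k$ conflates an increment estimate with a cumulative one --- the single-step bound $\|Df_{k+1}(z)-Df_{k+1}(z')\|\le C\e_k(f_k(z))r_k^{-1}|z-z'|$ is not true as written, since the $k$-th difference inherits all the earlier $\e_j$ as well; the correct statement at level $m$ is (\ref{lipbound}), which is what you actually need and what the paper uses.

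For the inverse you take a genuinely different route: the paper rederives a second-derivative estimate for $f_m^{-1}$ directly (this is (\ref{inverse_bound}) in the proof of Theorem~\ref{epsilonk11}) and runs the same Cauchy--Schwarz argument again, whereas you invoke $D(f^{-1})=(Df)^{-1}\circ f^{-1}$ together with the bi-Lipschitz bound from Theorem~\ref{ftdt} and the Lipschitz regularity of $Df$ just established, plus the stability of matrix inversion on a set where $Df$ is uniformly injective. This chain-rule argument is valid and arguably more economical, though it does gloss over the fact that $Df(x)$ is only a map between $d$-dimensional tangent spaces moving in $\R^n$ (so ``$(Df)^{-1}$'' is really a Moore--Penrose left inverse composed with the orthogonal projection onto the tangent plane $T\Sigma(y)$, and one needs that these tangent planes themselves vary Lipschitz continuously --- which follows from the Lipschitz bound on $Df$). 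If you spell out that bookkeeping, your route gives an alternative proof of the inverse estimate that avoids the separate computation in \cite{davidtoro}.
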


%%%%%%%%%%%%%%%%
\subsection{Estimates on the parametrization}
%%%%%%%%%%%%%%%%

We now want to collect estimates on the derivatives of the $\sigma_k$'s. Recall, by (\ref{sigma}), we defined  $\sigma_k(y)= \psi_k(y)y + \sum_{j \in J_k} \theta_{j,k}(y)\pi_{j,k}(y)$.

\begin{rk} \label{notation}
We set up some notation for the derivatives. Below $D$ and $D^2$ will denote slightly different things depending on the map they are applied to. 
\begin{itemize}
\item For the partition of unity $\theta_{j,k}, \psi_k \colon \R^n \to \R$, $D\theta_{j,k}$ and $D\psi_k$ denote the usual gradient, that is an $n$-vector, that is, a $n \times 1$ matrix. $D^2 \theta_{j,k}$ and $D^2 \psi_k$ denote the Hessian, which is a $n \times n$ matrix.
\item For vector valued maps $g \colon \R^n \to \R^n$, such as $f, f_k, \sigma_k, \pi_{j,k}, \pi_{j,k}^{\perp}$, write $g=(g^1, \dots, g^n)$, where the $g^i$ are the coordinate functions. Then $Dg=(Dg^1, \dots, Dg^n)$ which can be looked at as an $n \times n$ matrix. Similarly, $D^2g=(D^2g^1, \dots, D^2g^n)$ is a $3$-tensor, that is a bilinear form $\R^n \times \R^n \to \R^n$ that acts on vector $u,v \in \R^n$ via $D^2g\cdot u \cdot v =(D^2g^1\cdot u \cdot v, \dots, D^2g^n\cdot u \cdot v)$.
\end{itemize}
 In what follows $|\cdot|$ denotes the standard Euclidean norm on $\R^N$, for the appropriate $N$ (where we have identified $M_{n\times n}$ with $\R^{n^2}$).
\end{rk}

\begin{rk} \label{d2=0}
Note that while $\pi_{j,k}$ is an affine map, $\pi^{\perp}_{j,k}$ is a linear map. Also note that $D\pi_{j,k}(y)$, the Jacobian of $\pi_{j,k}$  at $y \in \R^n$, is the orthogonal projection onto the $d$-plane parallel to $P_{j,k}$ passing through the origin. Note that the Hessian $D^2\pi_{j,k}(y)=0$, for all $y \in \R^n$.  
\end{rk}

By differentiating (\ref{sigma}), we get that for $y \in V_k^{10}$, we have
\begin{equation} \label{dsigma}
D\sigma_k(y)= \psi_k(y)I + \sum_{j \in J_k} \theta_{j,k}(y)D\pi_{j,k} + yD\psi_k(y) + \sum_{j \in J_k} \pi_{j,k}(y) D\theta_{j,k}(y).
\end{equation}

Note that if $y \notin V_k^{10}$, then $\sigma_k(y)=y$ and also $D\sigma_k(y)=I$. Then we also have $D^2\sigma_k(y)=0$.

\begin{lem} \label{idk}
Let $y \in V_k^{10}$. We have
\begin{equation}  \label{ddsigma}
D^2\sigma_k(y) = 2 D\psi_k(y)I  + 2\sum_{j \in J_k} D\theta_{j,k}(y)D\pi_{j,k} + y D^2\psi_k(y) + \sum_{j \in J_k} \pi_{j,k}(y)D^2\theta_{j,k}(y).
\end{equation}

Choose $i=i(y) \in J_k$ such that $y \in 10B_{i,k}$ and set
\begin{equation}
g(y)= 2 D\psi_k(y)D\pi^{\perp}_{i,k}+ (y-\pi_{i,k}(y))D^2\psi_k(y). 
\end{equation}
Then
\begin{equation} \label{d2g}
\left|D^2\sigma_k(y) - g(y)\right|  \leq C \frac{\e}{r_k},
\end{equation}
where $C>0$ is a constant.
\end{lem}

\begin{proof}
We obtain (\ref{ddsigma}) by differentiating (\ref{dsigma}). For the last statement, recalling (\ref{partition}), we have
\begin{align*} \numberthis
g(y) & =  2 D\psi_k(y)D\pi^{\perp}_{i,k}+ (y-\pi_{i,k}(y))D^2\psi_k(y) = \\
& = 2 D\psi_k(y)[I-D\pi_{i,k}]+ yD^2\psi_k(y)-\pi_{i,k}(y)D^2\psi_k(y) = \\
& = 2 D\psi_k(y)I + 2 \sum_{j\in J_k} D\theta_{j,k}(y)D\pi_{i,k} + yD^2\psi_k(y) + \sum_{j\in J_k} \pi_{i,k}(y)D^2\theta_{j,k}(y).
\end{align*}
Now, note that $\left| D^2\theta_{j,k}(y)\right| \leq C\frac{1}{r_k^2}$. Moreover by (\ref{ccbp1}), for all nonzero terms, we have $\left| D\pi_{j,k}-D\pi_{i,k}\right| \leq C \e$, because $\theta_{j,k}=0$ outside of $10B_{j,k}$, so that $y \in 10B_{j,k}$ and hence $|x_{i,k}-x_{j,k}|<100r_k$ for our choice of $(i,k)$.
Hence, we get
\begin{align*} \numberthis
\left|D^2\sigma_k(y) - g(y)\right| & \leq 2\sum_{j \in J_k} \left|D\theta_{j,k}(y)\right| \left| D\pi_{j,k}-D\pi_{i,k}\right|  + \sum_{j \in J_k} \left| D^2\theta_{j,k}(y) \right| \left|\pi_{j,k}(y)-\pi_{i,k}(y)\right| \leq \\
& \leq C\frac{1}{r_k} \cdot C \e + C\frac{1}{r_k^2} \cdot C \e r_k = \\
& = C\frac{\e}{r_k},
\end{align*}
where we used the fact that $\left|\pi_{j,k}(y)-\pi_{i,k}(y)\right| \leq C \e r_k$ by (\ref{ccbp1}).
\end{proof}

%\begin{cor} \label{d2inv8}
%If $y \in V^8_k$, and $i$ as above,
%\begin{equation} \label{formulad2inv8}
%D^2\sigma_k(y)=   2 \sum_{j\in J_k} D\theta_{j,k}(y)D\pi_{j,k} + \sum_{j\in J_k} \pi_{j,k}(y)D^2\theta_{j,k}(y)
%\end{equation} 
%and
%\begin{equation} \label{firstbound}
%\left|D^2\sigma_k(y)\right|  \leq C \e/r_k,
%\end{equation}
%where $C>0$ is a constant.
%\end{cor}
%
%
%\begin{proof}
%Note that $\psi_k(y)=0$ for $y \in V^8_k$. Then $g(y)=0$ so the two statements follow immediately from the previous lemma.
%\end{proof}

We now want to collect some more estimates. Let $\Sigma_k$ be the image of $\Sigma_0$ under $f_k$, i.e. $\Sigma_k=f_k(\Sigma_0)=\sigma_{k-1} \circ \cdots \circ \sigma_0 (\Sigma_0)$. First, we need to recall some results from   \cite{davidtoro}. 
The main result is a local Lipschitz description of the $\Sigma_k$'s. For convenience we introduce the following notation for boxes.
\begin{defin} [Chapter 5, \cite{davidtoro}]
If $x \in \R^n$, $P$ is a $d$-plane through $x$ and $R>0$, we define the box $D(x,P,R)$ by
\begin{equation}
D(x,P,R)= \left\{ z + w \mid z \in P \cap B(x,R) \text{ and } w \in P^{\perp} \cap B(0,R)\right\}.
\end{equation}
\end{defin}

Recall that for a Lipschitz map $A \colon P \to P^{\perp}$ the graph of $A$ over $P$ is $\Gamma_A=\{z + A(z) \mid z \in P\}$.

\begin{prop} [Proposition 5.1 \cite{davidtoro}] \label{chap5dt}
For all $k \geq 0$ and $j \in J_k$, there is a Lipschitz function $A_{j,k} \colon P_{j,k} \cap 49B_{j,k} \to P_{j,k}^{\perp}$ of class $C^2$, $|A_{j,k}(x_{j,k})|\leq C \e r_k$, with 
\begin{equation}
\left| DA_{j,k}(z)\right| \leq C \e, \qquad z \in P_{j,k} \cap 49B_{j,k},
\end{equation}
such that around $x_{i,j}$ $\Sigma_k$ coincides with the graph of $A_{j,k}$, that is
\begin{equation} \label{graph}
\Sigma_k \cap D(x_{j,k},P_{j,k},49r_k)=\Gamma_{A_{j,k}}\cap D(x_{j,k},P_{j,k},49r_k).
\end{equation}
Moreover, we have that 
\begin{equation}
|\sigma_k(y)-y| \leq C \e r_k \text{ for $y \in \Sigma_k$}
\end{equation}
and%, if $u \in \R^n$, $|u|=1$,
\begin{equation}
\left| D\sigma_k(y)-D\pi_{j,k}-\psi_k(y)D\pi^{\perp}_{j,k}\right| \leq C\e \text{ for $y \in \Sigma_k \cap 45B_{j,k}$}.
\end{equation}
\end{prop}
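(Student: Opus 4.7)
The plan is to prove this by induction on $k$, simultaneously establishing the three conclusions: the graph representation of $\Sigma_k$ over $P_{j,k}$, the pointwise estimate $|\sigma_k(y)-y|\leq C\e r_k$ on $\Sigma_k$, and the derivative estimate on $45B_{j,k}$. The base case $k=0$ is essentially immediate: $\Sigma_0$ is itself a $d$-plane, and the CCBP condition $d_{x_{j,0},100}(P_{j,0},\Sigma_0)\leq \e$ means $\Sigma_0$ is an affine graph over $P_{j,0}$ of slope and $x_{j,0}$-value both $O(\e)$, which gives $A_{j,0}$.

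The two pointwise/derivative estimates fall out of the partition of unity identity $\psi_k+\sum_i \theta_{i,k}\equiv 1$. For the first, using this identity rewrite
\begin{equation*}
\sigma_k(y)-y=\sum_{i\in J_k}\theta_{i,k}(y)\bigl(\pi_{i,k}(y)-y\bigr).
\end{equation*}
For $y\in \Sigma_k\cap 10B_{j,k}$, the inductive graph representation gives $|y-\pi_{j,k}(y)|=|A_{j,k}(\pi_{j,k}(y-x_{j,k}))|\leq C\e r_k$, while the CCBP bound \eqref{ccbp1} gives $|\pi_{i,k}(y)-\pi_{j,k}(y)|\leq C\e r_k$ for the finitely many $i$ with $\theta_{i,k}(y)\neq 0$; adding these yields $|\sigma_k(y)-y|\leq C\e r_k$. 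For the derivative estimate, differentiating $\sigma_k$ and using $\sum_i D\theta_{i,k}+D\psi_k\equiv 0$ to cancel the term $\pi_{j,k}(y)\bigl(\sum_i D\theta_{i,k}+D\psi_k\bigr)$ converts \eqref{dsigma} into
\begin{equation*}
D\sigma_k(y)-D\pi_{j,k}-\psi_k(y)D\pi^{\perp}_{j,k}=\sum_i\theta_{i,k}(y)\bigl[D\pi_{i,k}-D\pi_{j,k}\bigr]+(y-\pi_{j,k}(y))D\psi_k(y)+\sum_i(\pi_{i,k}(y)-\pi_{j,k}(y))D\theta_{i,k}(y),
\end{equation*}
where I used $\psi_k I=\psi_k D\pi_{j,k}+\psi_k D\pi^{\perp}_{j,k}$. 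Each term is $O(\e)$ because $|D\theta_{i,k}|,|D\psi_k|\leq C/r_k$, $|\pi_{i,k}-\pi_{j,k}|=O(\e r_k)$ on $45B_{j,k}$, $|D\pi_{i,k}-D\pi_{j,k}|=O(\e)$, and the number of $i$ with $\theta_{i,k}\not\equiv 0$ on $45B_{j,k}$ is bounded.

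To propagate the graph property to $\Sigma_{k+1}$, I would argue as follows. Fix $j'\in J_{k+1}$ and pick $j\in J_k$ with $|x_{j,k}-x_{j',k+1}|\leq 2r_k$. Then $P_{j,k}$ and $P_{j',k+1}$ are $\e$-close at scale $r_k$ by the CCBP compatibility axiom. Composing the inductive local graph $\Gamma_{A_{j,k}}$ with $\sigma_k$ produces a piece of $\Sigma_{k+1}$ which, thanks to $|\sigma_k(y)-y|\leq C\e r_k$, sits within $C\e r_k$ of $P_{j,k}$ and hence within $2C\e r_k$ of $P_{j',k+1}$. The map $\pi_{j',k+1}\circ\sigma_k:\Gamma_{A_{j,k}}\to P_{j',k+1}$ has derivative whose difference from $D\pi_{j',k+1}\circ D\pi_{j,k}$ is $O(\e)$ by the estimate above and the closeness of $D\pi_{j',k+1}$ to $D\pi_{j,k}$; since $D\pi_{j',k+1}\circ D\pi_{j,k}$ is an $O(\e)$-perturbation of the identity on $P_{j,k}$, the inverse function theorem applies uniformly and produces a Lipschitz graph function $A_{j',k+1}:P_{j',k+1}\cap 49B_{j',k+1}\to P^\perp_{j',k+1}$ with $|DA_{j',k+1}|\leq C\e$ and $|A_{j',k+1}(x_{j',k+1})|\leq C\e r_{k+1}$. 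The $C^2$ regularity of $A_{j',k+1}$ is inherited from the $C^\infty$ smoothness of the partition of unity through this construction (the smoothness is local at each step, so no uniform $C^2$-bounds are needed, only qualitative $C^2$-membership).

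The main technical obstacle will be ensuring that the constants $C$ produced by the induction remain absolute (depending only on $n,d$) and do not degrade as $k$ grows. This forces careful bookkeeping: the $O(\e)$ slope estimate must be recovered exactly at each step rather than accumulating, which is where the smallness of $\e$ gets consumed, and where the choice of the enlargement factors ($49$ versus $45$ versus $10$) becomes essential so that all relevant points lie in the interior of the domain where estimates hold. This is precisely the combinatorial delicacy worked out in Chapter 5 of \cite{davidtoro}, which my proposal would follow step-for-step rather than reprove from scratch.
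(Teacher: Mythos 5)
Your proposal is correct and follows exactly the approach the paper gestures at: the paper does not prove Proposition \ref{chap5dt} at all, but cites it verbatim as Proposition~5.1 of \cite{davidtoro} and gives only a two-sentence heuristic (``proceeds by induction\dots once we have a Lipschitz description of $\Sigma_k$ we can obtain one with a comparable constant for $\Sigma_{k+1}$''). Your more detailed sketch — base case from $\Sigma_0$ being a plane, the two estimates falling out of the partition-of-unity identities, and propagation of the graph property via CCBP compatibility plus the inverse function theorem, with the acknowledged deferral of the radius and constant bookkeeping to Chapter~5 of \cite{davidtoro} — is a faithful elaboration of that same inductive argument, and your algebraic identity for $D\sigma_k(y)-D\pi_{j,k}-\psi_k(y)D\pi^{\perp}_{j,k}$ checks out.
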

Proposition \ref{chap5dt} provides a small Lipschitz graph (that is, a Lipschitz graph with a small constant) description for the $\Sigma_k$ around $x_{j,k}$. Note that, away from $x_{j,k}$, $\sigma_k=\operatorname{id}$, so that $\Sigma_k$ stays the same so that it is not hard to get control there too. 
The proof of Proposition \ref{chap5dt} is quite long and involved, and proceeds by induction. For $k=0$, $\Sigma_0$ is a plane, and because $P_{j,k}$ and $P_{i,k+1}$ make small angles with each other, once we have a Lipschitz description of $\Sigma_k$ we can obtain one with a comparable constant for $\Sigma_{k+1}$.
Using Proposition \ref{chap5dt} we can get estimates on the second derivatives of the $\sigma_k$'s.

\begin{prop} \label{chap5}
For all $k \geq 0$, $j \in J_k$, $y \in \Sigma_k \cap 45B_{j,k}$,  we have
\begin{equation}
\left|D^2\sigma_k(y) - 2 D\psi_k(y)D\pi^{\perp}_{j,k}\right| \leq C\frac{\e}{r_k}.
\end{equation}
\end{prop}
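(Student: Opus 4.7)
The plan is to bootstrap the previous lemma by exploiting the fact that on $\Sigma_k$ the point $y$ is very close to $P_{i,k}$ (not just in $10B_{i,k}$), which kills the $(y-\pi_{i,k}(y))D^2\psi_k(y)$ term of $g(y)$, and then to swap the index $i$ for $j$ via the coherence condition (\ref{ccbp1}).

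\textbf{Step 1.} Given $y \in \Sigma_k \cap 45B_{j,k}$, pick $i=i(y) \in J_k$ such that $y \in 10B_{i,k}$ (such an $i$ exists by maximality of the net $\{x_{i,k}\}$, which is the reason $V_k^{10}$ covers $\Sigma_k$ at scale $r_k$; this is built into the CCBP framework of \cite{davidtoro}). Then by the previous lemma
\[
\bigl|D^2\sigma_k(y) - 2D\psi_k(y)D\pi^{\perp}_{i,k} - (y-\pi_{i,k}(y))D^2\psi_k(y)\bigr| \leq C\e/r_k.
\]

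\textbf{Step 2.} I would kill the first-order correction term. Because $y \in \Sigma_k \cap 10B_{i,k} \subset \Sigma_k \cap 49B_{i,k}$, Proposition \ref{chap5dt} realizes $\Sigma_k$ near $x_{i,k}$ as the graph $\Gamma_{A_{i,k}}$ over $P_{i,k}$ with $|A_{i,k}(x_{i,k})|\le C\e r_k$ and $|DA_{i,k}|\le C\e$, hence
\[
|y-\pi_{i,k}(y)| = |A_{i,k}(\pi_{i,k}(y))| \leq C\e r_k.
\]
Combined with the standard bound $|D^2\psi_k(y)| \leq C/r_k^2$, this yields
\[
\bigl|(y-\pi_{i,k}(y))D^2\psi_k(y)\bigr| \leq C\e/r_k.
\]

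\textbf{Step 3.} I would then swap $i$ for $j$ in the remaining term. Since $y \in 10B_{i,k} \cap 45B_{j,k}$, we have $|x_{i,k}-x_{j,k}| \leq 55 r_k < 100 r_k$, so (\ref{ccbp1}) gives $d_{x_{j,k},100 r_k}(P_{i,k},P_{j,k}) \leq \e$, which translates (as in Remark \ref{d2=0}) to $|D\pi^{\perp}_{i,k} - D\pi^{\perp}_{j,k}| \leq C\e$. With $|D\psi_k(y)| \leq C/r_k$, this gives
\[
\bigl|2D\psi_k(y)D\pi^{\perp}_{i,k} - 2D\psi_k(y)D\pi^{\perp}_{j,k}\bigr| \leq C\e/r_k.
\]

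\textbf{Step 4.} The triangle inequality combining Steps 1--3 gives the desired estimate
\[
\bigl|D^2\sigma_k(y) - 2D\psi_k(y)D\pi^{\perp}_{j,k}\bigr| \leq C\e/r_k.
\]
The only subtle points are the existence of a witness index $i$ with $y\in 10B_{i,k}$ (which requires knowing $\Sigma_k$ is covered by the $10$-dilations of the balls, a fact inherited from the construction of the CCBP in \cite{davidtoro}) and the quantitative bound $|y-\pi_{i,k}(y)|\le C\e r_k$ for $y\in\Sigma_k$; everything else is bookkeeping with (\ref{ccbp1}) and the standard derivative bounds on the partition of unity.
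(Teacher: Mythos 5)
Your argument follows the same decomposition as the paper's proof: start from the bound $\left|D^2\sigma_k(y)-g(y)\right|\le C\e/r_k$ of the preceding lemma, kill the $(y-\pi_{i,k}(y))D^2\psi_k(y)$ correction by exploiting the graph description in Proposition \ref{chap5dt}, and then replace $D\pi^{\perp}_{i,k}$ by $D\pi^{\perp}_{j,k}$ via (\ref{ccbp1}). Step 2 is run slightly more directly than in the paper (you bound $\dist(y,P_{i,k})$ directly using $\Gamma_{A_{i,k}}$, while the paper first passes to $\dist(y,P_{j,k})$ and then transfers to $P_{i,k}$ using coherence), but the two routes are equivalent.

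There is, however, a genuine error in Step 1. You assert that a witness index $i\in J_k$ with $y\in 10B_{i,k}$ always exists because ``$V_k^{10}$ covers $\Sigma_k$ at scale $r_k$ by maximality of the net.'' This is false: the net $\{x_{j,k}\}$ is chosen inside $E$, and maximality only forces $E$ to be $r_k$-dense in itself; it says nothing about points of $\Sigma_k$. Since $E$ is only one-sided Reifenberg flat, it may have holes of arbitrary size, and $\Sigma_k$ passes across those holes at a distance from all net points that can exceed $10r_k$ even inside $45B_{j,k}$. (In fact, outside the bounded region $E\subset B(0,1)$, the map $\sigma_{k}$ is the identity and $\Sigma_k$ simply coincides with $\Sigma_0$, so $V_k^{10}$ cannot cover $\Sigma_k$.) The correct handling, and the one the paper uses, is a case distinction: if $y\notin V_k^{10}$, then $\psi_k\equiv 1$ and $\sigma_k=\id$ in a neighborhood of $y$ (the supports of the $\theta_{j,k}$ are compact in the open balls $10B_{j,k}$), so both $D^2\sigma_k(y)=0$ and $D\psi_k(y)=0$ and the inequality is trivial; only if $y\in V_k^{10}$ does one pick $i$ with $|y-x_{i,k}|\le 10r_k$ and run your Steps 1--4. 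With that case distinction inserted, your proof is correct.
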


\begin{proof}
Let $j \in J_k$ and $y \in \Sigma_k \cap 45B_{j,k}$ be given. If $y \notin V_k^{10}$, then $\psi_k(y)=1$ and $D^2\sigma_k(y)=0$, so there is nothing to prove. So we may assume that $y \in V_k^{10}$ and choose $i \in J_k$ such that $|y-x_{i,k}| \leq 10r_k$. Recall that, by (\ref{d2g}) in Lemma \ref{idk},
\begin{equation}
\left|D^2\sigma_k(y) - g(y)\right|  \leq C\frac{\e}{r_k}.
\end{equation}

We want to control
\begin{align*} \numberthis
B & =g(y)- 2 D\psi_k(y)D\pi^{\perp}_{j,k} = \\
&= 2 D\psi_k(y)[D\pi^{\perp}_{i,k}-D\pi^{\perp}_{j,k}] + [y-\pi_{i,k}(y)]D^2\psi_k(y)
\end{align*}
In the construction of the coherent families of balls and planes, since $y \in 45B_{j,k} \cap 10B_{i,k}$, (\ref{ccbp1}) says that
\begin{equation}
d_{x_{j,k}, 100r_k}(P_{i,k}, P_{j,k})  \leq \e
\end{equation}
and so, 
\begin{equation}
\left|D\pi_{i,k}-D\pi_{j,k} \right| + \left| D\pi^{\perp}_{i,k}-D\pi^{\perp}_{j,k}\right| \leq C \e.
\end{equation}
Recalling also that $\left| D\psi_k(y)\right| \leq C\frac{1}{r_k}$, we can bound the first two terms of $B$ by  $C\frac{\e}{r_k}$. Next
\begin{align*} \numberthis
[y-\pi_{i,k}(y)]D^2\psi_k(y) & 
\leq C r_k^{-2} |y-\pi_{i,k}(y)| = \\
& = C r_k^{-2} \dist(y,P_{i,k}) \leq \\
& \leq C r_k^{-2} \dist(y,P_{j,k}) + C\frac{\e}{r_k}.
\end{align*}
By the results in Proposition \ref{chap5dt}, we also have
\begin{equation}
\dist(y,P_{j,k}) \leq |A_{j,k}(x_{j,k})| + C \e r_k \leq C\e r_k.
\end{equation}
Then, finally, 
\begin{equation}
\left|D^2\sigma_k(y) - 2 D\psi_k(y)D\pi^{\perp}_{j,k}\right| \leq \left|D^2\sigma_k(y) - g(y)\right| + |B| \leq C \frac{\e}{r_k}. \qedhere
\end{equation}
\end{proof}

In the next lemmas from \cite{davidtoro} we want to check how much the mappings $f_k$ distort lengths and distances. We are only concerned with directions parallel to the tangent planes to $\Sigma_k$. Lemma \ref{tgderivatives} below is enough to obtain the original H\"{o}lder estimates in Theorem \ref{f}, but we need more precise estimates to obtain more quantitative results.

\begin{lem} [Lemma 7.1 \cite{davidtoro}] \label{tgderivatives}
Let $k \geq 0$, $\sigma_k \colon \Sigma_k \to \Sigma_{k+1}$ is a $C^2$ diffeomorphism, and for $y \in \Sigma_k$
\begin{equation}
D\sigma_k(y) \colon T \Sigma_k(y) \to T\Sigma_{k+1}(\sigma_k(y)) \text{ is a $(1+C\e)$-bi-Lipschitz map}.
\end{equation}
Moreover, for $v \in T\Sigma_k(y)$
\begin{equation} \label{inverse2}
\left| D\sigma_k(y)\cdot v - v \right| \leq  C \e|v|.
\end{equation}
\end{lem}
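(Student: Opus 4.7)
The plan is to reduce everything to the local Lipschitz-graph description from Proposition \ref{chap5dt} and the approximation formula
\begin{equation*}
\bigl| D\sigma_k(y)-D\pi_{j,k}-\psi_k(y)D\pi^{\perp}_{j,k}\bigr| \leq C\e \quad \text{for } y \in \Sigma_k \cap 45 B_{j,k},
\end{equation*}
which was recalled in that proposition. The idea is that $D\sigma_k(y)$ is essentially (modulo an error of size $\e$) the map that fixes the $P_{j,k}$-direction and scales the perpendicular direction by $\psi_k(y) \in [0,1]$; since $T\Sigma_k(y)$ lies within angle $O(\e)$ of $P_{j,k}$, this nearly-identity behavior on $P_{j,k}$ transfers to a nearly-identity behavior on $T\Sigma_k(y)$.

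First, I would dispose of the easy case: if $y \notin V^{10}_k$ then $\sigma_k$ equals the identity on a neighborhood of $y$, so $D\sigma_k(y)$ is literally the identity and both conclusions are trivial (with $T\Sigma_{k+1}(\sigma_k(y))=T\Sigma_k(y)$). For the main case $y \in V^{10}_k \cap \Sigma_k$, choose $j \in J_k$ with $y \in 10B_{j,k}$, so in particular $y \in 45 B_{j,k}$ and Proposition \ref{chap5dt} applies. Using that $\Sigma_k$ coincides with the graph $\Gamma_{A_{j,k}}$ on $D(x_{j,k},P_{j,k},49 r_k)$ and that $|DA_{j,k}| \leq C\e$, I would show that for any unit $v \in T\Sigma_k(y)$ the decomposition $v = v_P + v_\perp$ into $P_{j,k}$ and $P_{j,k}^{\perp}$ components satisfies
\begin{equation*}
|v_\perp| \leq C\e, \qquad |v_P|^2 = 1 - |v_\perp|^2 \geq 1-C\e^2.
\end{equation*}

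Next, applying the approximation formula to the unit vector $v$ gives
\begin{equation*}
D\sigma_k(y)\cdot v = D\pi_{j,k}\cdot v + \psi_k(y)\,D\pi^{\perp}_{j,k}\cdot v + E = v_P + \psi_k(y)\,v_\perp + E,
\end{equation*}
with $|E|\leq C\e$. Since $v_P \perp v_\perp$, this immediately yields
\begin{equation*}
|D\sigma_k(y)\cdot v - v| \leq |(\psi_k(y)-1)v_\perp| + |E| \leq |v_\perp| + C\e \leq C\e,
\end{equation*}
which is (\ref{inverse2}) after homogenizing in $|v|$. For the biLipschitz estimate, I would compute
\begin{equation*}
|v_P+\psi_k(y)v_\perp|^2 = |v_P|^2 + \psi_k(y)^2|v_\perp|^2 = 1 - (1-\psi_k(y)^2)|v_\perp|^2 = 1 + O(\e^2),
\end{equation*}
and then combine with the $O(\e)$ error $E$ (using $(a+O(\e))^2 = a^2+O(\e)$ for $a$ near $1$) to conclude $|D\sigma_k(y)\cdot v| = 1 + O(\e)$, which is the $(1+C\e)$-biLipschitz bound.

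It remains to check that $D\sigma_k(y)$ actually maps $T\Sigma_k(y)$ into (and onto) $T\Sigma_{k+1}(\sigma_k(y))$, and that $\sigma_k:\Sigma_k\to\Sigma_{k+1}$ is a $C^2$ diffeomorphism. The $C^2$ regularity of $\sigma_k$ is automatic from the definition (\ref{sigma}) since $\psi_k$, $\theta_{j,k}$ are smooth cutoffs and $\pi_{j,k}$ is affine. For the range of $D\sigma_k(y)$, by Proposition \ref{chap5dt} applied at the next scale $k+1$, $\Sigma_{k+1}$ is also a small Lipschitz graph over some $P_{i,k+1}$, so $T\Sigma_{k+1}(\sigma_k(y))$ lies within angle $O(\e)$ of $P_{j,k}$ as well; combined with the fact that $D\sigma_k(y)$ is within $C\e$ of the identity on $T\Sigma_k(y)$, a dimension count forces the image to be precisely $T\Sigma_{k+1}(\sigma_k(y))$. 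Global injectivity of $\sigma_k$ on $\Sigma_k$ follows because $|\sigma_k(y)-y|\leq C\e r_k$ and $D\sigma_k$ is uniformly close to the identity along $T\Sigma_k$, so $\sigma_k$ cannot collapse separated graph pieces. The main obstacle I expect is carefully tracking the interplay between three sources of error — the angle between $T\Sigma_k(y)$ and $P_{j,k}$, the error $E$ in the approximation of $D\sigma_k$, and the fact that the nearby plane $P_{j,k}$ depends on the (non-canonical) choice of $j$ — without letting any $\e$-factor accidentally become $O(1)$; the cross-term cancellation relying on $v_P \perp v_\perp$ is what ultimately makes the estimate work cleanly.
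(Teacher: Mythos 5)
Your argument is correct and follows the same route as David and Toro's Lemma 7.1, which the paper cites directly without reproducing a proof: reduce to the local Lipschitz-graph description of Proposition \ref{chap5dt}, use the derivative approximation $\left| D\sigma_k(y)-D\pi_{j,k}-\psi_k(y)D\pi^{\perp}_{j,k}\right|\leq C\e$, and exploit the orthogonality $v_P\perp v_\perp$ together with $|v_\perp|\leq C\e$ to get both the near-isometry bound and (\ref{inverse2}). The only thing worth tightening in a write-up is the ``dimension count'' for surjectivity onto $T\Sigma_{k+1}(\sigma_k(y))$: containment $D\sigma_k(y)\,T\Sigma_k(y)\subseteq T\Sigma_{k+1}(\sigma_k(y))$ is automatic from the chain rule because $\sigma_k(\Sigma_k)=\Sigma_{k+1}$, and injectivity (hence surjectivity by equal dimension) follows from the lower biLipschitz bound you already established.
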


Recall Definition \ref{e_k}, 
\begin{equation}
 \e_k(y)=\sup\{ d_{x_{i,l},100r_l}(P_{j,k},P_{i,l}) \mid j \in J_k, \ l \in \{k-1,k\}, i \in J_l \  \ y \in 10B_{j,k} \cap11B_{i,k}\}
\end{equation}
and $ \e_k(y)=0$, for $y \in \R^n \setminus V_k^{10}$.
The numbers $\e_k$ measure the angles between the planes $P_{j,k}$ and $P_{i,l}$ and, while we know that $\e_k(y)\leq \e)$ by definition of CCBP we want to keep track of the places where they are much smaller and improve the estimates obtained before. 

The next lemma provides improved distortion estimates for the tangential derivatives of $\sigma_k$, which will be useful when estimating $|f(x)-f(y)|$.
For two $d$-planes $P_1$ and $P_2$, with $0 \in P_1 \cap P_2$, we define the angle between them to be 
\begin{equation}
\operatorname{Angle}(P_1,P_2)=\sup_{y \in P_1, |y|=1}\dist(y,P_2)=\sup_{x \in \R^n, |x|=1}|\pi_1^{\perp}\circ \pi_2(x)|.
\end{equation}
If the two planes do not both pass through zero, we take the associated linear subspaces.
\begin{lem} [Lemma 7.3 + 7.4 \cite{davidtoro}] \label{lem74}
For $k \geq 1$, $y \in \Sigma_k \cap V_k^8$, choose $i \in J_k$ such that $|y-x_{i,k}|\leq 10 r_k$, and let $u \in T_y\Sigma_k, |u|=1$. Then for all $j \in J_k$ such that $y \in 10B_{j,k}$, 
\begin{equation} \label{lemma73}
    \left| D\pi_{i,k} \cdot [\pi_{j,k}(y)-y] \right| \leq C\e_k(y)^2r_k,
\end{equation}
\begin{equation} \label{lemma73angle}
    \operatorname{Angle}(T\Sigma_k(y),P_{i,k}) \leq C\e_k(y),
\end{equation}
\begin{equation} \label{74proof}
    \left| D\pi_{i,k} \circ \left[D\pi_{j,k}- D\pi_{i,k}\right] \circ D\pi_{i,k} \right| \leq C \e_k(y)^2,
\end{equation}
and for every unit vector $v \in T\Sigma_k(y)$,
\begin{equation} \label{e2sigma}
    \left||D\sigma_k(y)\cdot v|-1\right| \leq  C \e_k(y)^2.
\end{equation}
\end{lem}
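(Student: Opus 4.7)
The plan is to prove the four estimates in the order (\ref{lemma73}), (\ref{lemma73angle}), (\ref{74proof}), (\ref{e2sigma}), using two inputs: (i) the definitional bound $d_{x_{i,k},100r_k}(P_{j,k},P_{i,k}) \leq \e_k(y)$ for every $(j,k)$ with $y \in 10B_{j,k}\cap 11B_{i,k}$ (from Definition \ref{e_k}), and (ii) the refined graph description from Proposition \ref{chap5dt}, sharpened so that $\dist(y,P_{j,k}) \leq C\e_k(y)\,r_k$ for $y \in \Sigma_k \cap 45B_{j,k}$. This sharpened graph bound (replacing the crude $C\e r_k$ by $C\e_k(y)\, r_k$) comes from unwrapping the inductive construction of $\Sigma_k$: $\sigma_{k-1}$ moves points almost along $P_{\cdot,k-1}^\perp$ and the planes $P_{i,k-1}, P_{j,k}$ differ by at most $\e_k(y)$ in normalized Hausdorff distance.

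For (\ref{lemma73}), the vector $w := \pi_{j,k}(y)-y$ lies in $P_{j,k}^\perp$, so by (ii) we have $|w| \leq C\e_k(y)\, r_k$. Since $D\pi_{i,k}$ is orthogonal projection onto $P_{i,k}$ and $P_{i,k}, P_{j,k}$ make an angle of at most $C\e_k(y)$ by (i), the operator norm of $D\pi_{i,k}$ restricted to $P_{j,k}^\perp$ is bounded by $C\e_k(y)$. Multiplying the two bounds gives (\ref{lemma73}). For (\ref{lemma73angle}), the graph description of $\Sigma_k$ over $P_{i,k}$ near $x_{i,k}$ has Lipschitz constant controlled by $C\e_k(y)$ (this is where the sharpened inductive bound from (ii) is used), and the angle between the tangent plane of a Lipschitz graph and the base plane is bounded by the Lipschitz constant.

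For (\ref{74proof}), this is a purely linear-algebraic statement: if $P,P'$ are $d$-planes with $\|D\pi_P - D\pi_{P'}\| \leq \eta$, then $\|D\pi_P \circ (D\pi_{P'}-D\pi_P) \circ D\pi_P\| \leq C\eta^2$. Choosing an orthonormal frame adapted to $P$, write $D\pi_{P'}$ as a block matrix; the first-order (in $\eta$) part is off-diagonal, hence is annihilated by sandwiching with $D\pi_P$. Only the diagonal $O(\eta^2)$ block survives, yielding the claim with $\eta = \e_k(y)$.

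Finally, for (\ref{e2sigma}), use formula (\ref{formulad2inv8})-type identity for $D\sigma_k(y)$ on $V_k^8$ (where $\psi_k \equiv 0$): $D\sigma_k(y) = \sum_j \theta_{j,k}(y)\,D\pi_{j,k} + \sum_j \pi_{j,k}(y)\,D\theta_{j,k}(y)$. Using $\sum_j \theta_{j,k}\equiv 1$ and $\sum_j D\theta_{j,k}\equiv 0$ on $V_k^8$, rewrite this as
\begin{equation}
D\sigma_k(y)v = D\pi_{i,k}v + \sum_j \theta_{j,k}(y)(D\pi_{j,k}-D\pi_{i,k})v + \sum_j (\pi_{j,k}(y)-y)\,(D\theta_{j,k}(y)\cdot v).
\end{equation}
For unit $v \in T\Sigma_k(y)$, decompose $v = D\pi_{i,k}v + v^\perp$ where $|v^\perp| \leq C\e_k(y)$ by (\ref{lemma73angle}); hence $|D\pi_{i,k}v| = 1 + O(\e_k(y)^2)$. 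In the middle sum, replace $v$ by $D\pi_{i,k}v + v^\perp$ and use (\ref{74proof}) for the $D\pi_{i,k}v$ piece (giving $O(\e_k(y)^2)$) and the trivial bound $|D\pi_{j,k}-D\pi_{i,k}| \leq C\e_k(y)$ multiplied by $|v^\perp| \leq C\e_k(y)$ for the other piece. The last sum is controlled by (\ref{lemma73}) (up to subtracting $y$, which uses $|D\theta_{j,k}| \leq C/r_k$). Putting everything together and taking norms squared, $|D\sigma_k(y)v|^2 = 1 + O(\e_k(y)^2)$, as desired. The main technical obstacle throughout is the sharpened inductive bound $\dist(y,P_{j,k}) \leq C\e_k(y)\,r_k$; once this is in hand, the rest is angle bookkeeping and the sandwich identity.
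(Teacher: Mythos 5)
This lemma is stated in the paper as a citation -- ``Lemma 7.3 + 7.4~\cite{davidtoro}'' (with the remark that (\ref{74proof}) is equation (7.31) in the proof of David--Toro's Lemma 7.4) -- so the paper does not give its own proof, and your attempt has to be compared against David and Toro's argument rather than anything in this document.

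Your sketch gets the architecture right and the carried-out steps are correct. In particular, the linear-algebra identity behind (\ref{74proof}) is sound: writing $D\pi_{P'}$ in a frame adapted to $P$ as $\bigl(\begin{smallmatrix}A & B\\ B^T & C\end{smallmatrix}\bigr)$, the idempotency $(D\pi_{P'})^2=D\pi_{P'}$ forces $A-I=-(BB^T+(I-A)^2)$ and hence $\|A-I\|=O(\eta^2)$, so that $D\pi_P(D\pi_{P'}-D\pi_P)D\pi_P=\bigl(\begin{smallmatrix}A-I & 0\\ 0 & 0\end{smallmatrix}\bigr)$ is $O(\eta^2)$ as you claim. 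The reduction of (\ref{e2sigma}) to (\ref{lemma73}), (\ref{lemma73angle}), (\ref{74proof}) via $|D\sigma_k(y)v|^2 = |D\pi_{i,k}v|^2 + 2\langle D\pi_{i,k}v, w\rangle + |w|^2$ with $w$ the sum of the two error terms is also correct once you expand the inner products and use self-adjointness of $D\pi_{i,k}$.

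The gap is that almost the entire weight of the lemma rests on your ``input (ii),'' the pair of sharpened bounds $\dist(y,P_{j,k})\le C\e_k(y)\,r_k$ and the companion gradient/angle bound $|DA_{j,k}|\le C\e_k(y)$ near $y$, and these you only assert, with a one-sentence gesture at ``unwrapping the inductive construction.'' But that unwrapping \emph{is} the content of David--Toro's Lemmas 7.2--7.4: it is an induction on $k$ in which one has to show that the displacement $|\sigma_{k-1}(z)-\pi_{\cdot,k-1}(z)|$ and the angle $\operatorname{Angle}(T\Sigma_{k-1}(z),P_{\cdot,k-1})$ at the previous scale feed in at the right order, that the error from scale $k-1$ to $k$ is absorbed into $\e_k(y)$ (recall from Definition~\ref{e_k} that $\e_k$ deliberately includes both levels $l=k-1$ and $l=k$, exactly so that this step closes), and that nothing of lower order leaks through. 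As written your argument is circular at exactly this point: you use input (ii) to prove (\ref{lemma73angle}), but input (ii) at scale $k$ is in turn established from the scale-$(k-1)$ analogue of (\ref{lemma73angle}), so without carrying out the induction you have proved nothing. In short, the proposal is a faithful outline of David--Toro's proof with the hard step black-boxed; it would need the inductive estimate on $\Sigma_k$'s local Lipschitz description, stated and proved with the constant $\e_k(y)$ rather than $\e$, to actually stand on its own.
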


\begin{rk}
Equation (\ref{74proof}) is in fact (7.31) in the proof of Lemma 7.4 in \cite{davidtoro}.
\end{rk}
We now want to obtain similar estimates on the second derivatives of the $\sigma_k$.
\begin{lem} \label{lem1}
For $k \geq 0$, $y \in \Sigma_k \cap V_k^8$,  we have
\begin{equation}
\left|D^2 \sigma_k(y)\right| \leq C \frac{\e_k(y)}{r_k}.
\end{equation}
\end{lem}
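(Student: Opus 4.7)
The plan is to sharpen the bound $|D^2\sigma_k(y)| \leq C\e/r_k$ from Corollary \ref{d2inv8} by exploiting the partition of unity together with the definition of $\e_k(y)$. Since $y \in V_k^8$, formula (\ref{formulad2inv8}) gives
\begin{equation*}
D^2\sigma_k(y) = 2\sum_{j \in J_k} D\theta_{j,k}(y)\,D\pi_{j,k} + \sum_{j \in J_k} \pi_{j,k}(y)\,D^2\theta_{j,k}(y).
\end{equation*}
On $V_k^8$ we have $\sum_{j \in J_k}\theta_{j,k} \equiv 1$ by (\ref{partition}), so $\sum_{j \in J_k}D\theta_{j,k}(y) = 0$ and $\sum_{j \in J_k}D^2\theta_{j,k}(y)=0$. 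Fix $i = i(y) \in J_k$ with $y \in 10B_{i,k}$ (which exists because $y \in V_k^8 \subset V_k^{10}$); then we may subtract constants to rewrite
\begin{equation*}
D^2\sigma_k(y) = 2\sum_{j \in J_k} D\theta_{j,k}(y)\bigl[D\pi_{j,k}- D\pi_{i,k}\bigr] + \sum_{j \in J_k} \bigl[\pi_{j,k}(y)-\pi_{i,k}(y)\bigr]\,D^2\theta_{j,k}(y).
\end{equation*}

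Next I would invoke the definition of $\e_k(y)$. Every nonzero term in the sums forces $y \in 10B_{j,k}$, and since also $y \in 10B_{i,k} \subset 11B_{i,k}$, the pair $(j,k)$ and $(i,k)$ is admissible in Definition \ref{e_k}, so $d_{x_{i,k},100r_k}(P_{j,k},P_{i,k}) \leq \e_k(y)$. Translating normalized Hausdorff distance between $d$-planes into operator bounds on the associated projections (a standard fact already used implicitly throughout the paper) yields
\begin{equation*}
|D\pi_{j,k}-D\pi_{i,k}| \leq C\e_k(y), \qquad |\pi_{j,k}(y)-\pi_{i,k}(y)| \leq C\e_k(y)\,r_k,
\end{equation*}
the second estimate because $y$ lies in a ball of radius $\sim r_k$ around $x_{i,k}$.

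Combining these with the standard partition-of-unity bounds $|D\theta_{j,k}| \leq C/r_k$ and $|D^2\theta_{j,k}| \leq C/r_k^2$, and using the bounded overlap of the family $\{10B_{j,k}\}_{j \in J_k}$ (so only $O(1)$ indices $j$ contribute to the sums at any given $y$), I obtain
\begin{equation*}
|D^2\sigma_k(y)| \leq C \cdot \frac{1}{r_k}\cdot \e_k(y) + C \cdot \e_k(y)\,r_k \cdot \frac{1}{r_k^2} \leq C\frac{\e_k(y)}{r_k},
\end{equation*}
which is the desired estimate. The only real step is recognizing that the vanishing of $\sum_j D\theta_{j,k}$ and $\sum_j D^2\theta_{j,k}$ on $V_k^8$ lets us replace the crude bound $|D\pi_{j,k}|, |\pi_{j,k}(y)| = O(1)$ with the geometrically meaningful differences controlled by $\e_k(y)$; the rest is bookkeeping.
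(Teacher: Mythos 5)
Your proof is correct and follows essentially the same argument as the paper: start from the formula of Corollary \ref{d2inv8}, use $\sum_j \theta_{j,k} \equiv 1$ on $V_k^8$ to subtract the $i$-th plane and projection, and then bound the differences $|D\pi_{j,k}-D\pi_{i,k}|$ and $|\pi_{j,k}(y)-\pi_{i,k}(y)|$ by (a multiple of) $\e_k(y)$ via the definition of $\e_k$. (Minor note: the paper's proof writes $d_{x_{i,k},100r_k}(P_{i,k},P_{j,k}) \leq \e_k(y)r_k$, which appears to be a typo for $\leq \e_k(y)$ since the normalized Hausdorff distance is already dimensionless; your version is the correct one.)
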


\begin{proof}
Choose $i \in J_k$ such that $|y-x_{i,k}| \leq 10r_k$. Then
\begin{equation} 
D^2 \sigma_k(y)(y) =  2\sum_{j \in J_k} D\theta_{j,k}(y)\left[D\pi_{j,k}- D\pi_{i,k}\right] + \sum_{j \in J_k} [\pi_{j,k}(y)-\pi_{i,k}(y)]D^2\theta_{j,k}(y)
\end{equation} 
by (\ref{partition}). Now, when $\theta_{j,k}(y)\neq 0$,
\begin{equation}
d_{x_{i,k},100r_k}(P_{i,k},P_{j,k}) \leq \e_k(y)r_k,
\end{equation}
because $ y \in 10B_{j,k}\cap 10B_{i,k}$. Hence $|\pi_{i,k}(y)-\pi_{j,k}(y)| \leq C \e_k(y)r_k$
and $\left|D\pi_{j,k}- D\pi_{i,k}\right| \leq C \e_k(y)$. Moreover $\left|D\theta_{j,k}(y)\right| \leq C\frac{1}{r_k}$ and $\left| D^2\theta_{j,k}(y)\right| \leq C\frac{1}{r_k^2}$, so that
\begin{equation}
\left| D^2 \sigma_k(y) \right| \leq  \ C\frac{1}{r_k}  \e_k(y) + C\frac{1}{r_k^2} \e_k(y)r_k \leq C \frac{\e_k(y)}{r_k}. \qedhere
\end{equation}
\end{proof}

Recall now that by Lemma \ref{tgderivatives}, $D\sigma_k$ is bijective.  Following the same steps as above we can improve the estimates on the inverses of the $\sigma_k$'s and obtain the following lemma.

\begin{lem}
Let $v$ be a unit vector in $T\Sigma_{k+1}(z)$, and $z \in \Sigma_{k+1}\cap V_{k+1}^8$. Then 
\begin{equation} \label{zeroinverse}
\left| D\sigma^{-1}_k(y)\cdot v - v \right| \leq  C \e_k(z)|v|,
\end{equation}
\begin{equation} \label{oneinverse}
\left||D\sigma_k^{-1}(z)\cdot v|-1\right| \leq  C \e_k(z)^2,
\end{equation}
and
\begin{equation} \label{twoinverse}
\left| D^2\sigma_k^{-1}(z)\right| \leq C  \frac{\e_k(z)}{r_k}.
\end{equation}
\end{lem}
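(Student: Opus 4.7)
The plan is to view $\sigma_k$ as a $C^2$ diffeomorphism between the tangent planes $T\Sigma_k(y) \to T\Sigma_{k+1}(z)$ with $z = \sigma_k(y)$, and read off each of the three estimates for $\sigma_k^{-1}$ from the matching estimate for $\sigma_k$ already proved in Lemmas \ref{tgderivatives}, \ref{lem74}, and \ref{lem1}. The preliminary reduction I would carry out is to observe that $|\sigma_k(y) - y| \leq C\e r_k$ by Proposition \ref{chap5dt}, so $y = \sigma_k^{-1}(z)$ lies within $C\e r_k$ of $z$. Combined with the CCBP compatibility relations and the fact that $\e_k(\cdot)$ is a supremum over balls of radius $\sim r_k$, this forces $\e_k(y) \leq C\e_k(z)$ (after possibly slightly enlarging the neighbourhood $V_{k+1}^8$ to absorb the small displacement). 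Once this comparison is in place, it suffices to prove each bound with $\e_k(y)$ on the right.

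For (\ref{zeroinverse}), I would combine the angle bound (\ref{lemma73angle}) with the formula (\ref{dsigma}), using $\psi_k \equiv 0$ on $V_k^8$ and the estimate $|D\pi_{j,k} - D\pi_{i,k}| \leq C\e_k(y)$ from (\ref{ccbp1}) restricted to the relevant neighbouring balls, to show that for a tangent vector $u \in T\Sigma_k(y)$ one has $|D\sigma_k(y)\cdot u - u| \leq C\e_k(y)|u|$. This writes $D\sigma_k = I + A$ on $T\Sigma_k(y)$ with $\|A\| \leq C\e_k(y)$, and a Neumann series inversion gives $D\sigma_k^{-1} - I = -A + O(\e_k^2)$, yielding the desired bound. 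For (\ref{oneinverse}), set $u = D\sigma_k^{-1}(z)\cdot v$, so $D\sigma_k(y)\cdot u = v$ and $|v|=1$; applying (\ref{e2sigma}) to the unit vector $u/|u|$ gives $\bigl|\,|D\sigma_k(y)\cdot u/|u|\,|-1\bigr| \leq C\e_k(y)^2$, i.e.\ $\bigl|1/|u|-1\bigr| \leq C\e_k(y)^2$; since $|u|$ is already close to $1$ by (\ref{zeroinverse}), this rearranges to $\bigl|\,|u|-1\bigr| \leq C\e_k(z)^2$.

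Finally, for (\ref{twoinverse}), I would differentiate the identity $D\sigma_k^{-1}(\sigma_k(y))\cdot D\sigma_k(y) = I$ on $T\Sigma_k(y)$ in a further tangent direction $w$ to obtain
\[
D^2\sigma_k^{-1}(z)\bigl(D\sigma_k(y)\cdot w,\,D\sigma_k(y)\cdot u\bigr) \;=\; -\,D\sigma_k^{-1}(z)\cdot D^2\sigma_k(y)(w,u).
\]
Since $D\sigma_k$ and $D\sigma_k^{-1}$ are both $(1+C\e)$-biLipschitz on the relevant tangent planes, this bounds $|D^2\sigma_k^{-1}(z)|$ by a constant multiple of $|D^2\sigma_k(y)|$, and Lemma \ref{lem1} then gives $|D^2\sigma_k^{-1}(z)| \leq C\e_k(z)/r_k$. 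The main technical point throughout is the initial comparability $\e_k(y) \lesssim \e_k(z)$; once that is verified, the three estimates follow by elementary chain-rule manipulations on tangent spaces.
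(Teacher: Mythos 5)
Your approach matches the one the paper intends: the paper simply says ``Following the same steps as above'' and defers to David and Toro's inverse-map estimates in Section 7 of \cite{davidtoro}, which is precisely what you reconstruct. Your derivation of each estimate from the corresponding forward bound -- a Neumann-series inversion for (\ref{zeroinverse}), applying (\ref{e2sigma}) to $u = D\sigma_k^{-1}(z)\cdot v$ and noting that $D\sigma_k(y)\cdot (u/|u|) = v/|u|$ for (\ref{oneinverse}), and differentiating the identity $D\sigma_k^{-1}(\sigma_k(y)) \cdot D\sigma_k(y) = I$ for (\ref{twoinverse}) -- is correct and is the route taken in the reference.

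The one step you flag as ``the main technical point,'' namely $\e_k(y) \leq C\e_k(z)$ with $y = \sigma_k^{-1}(z)$, is indeed where care is needed, and your parenthetical justification (``after possibly slightly enlarging the neighbourhood $V_{k+1}^8$'') is not quite the right fix. The difficulty is not the domain $V_{k+1}^8$ but the ball radii $10B_{j,k}$ and $11B_{i,l}$ appearing in Definition \ref{e_k}: if $y$ sits near the boundary of $10B_{j,k}$, the displaced point $z = \sigma_k(y)$ may lie just outside $10B_{j,k}$, so a triple $(j,i,l)$ that witnesses a large value of $\e_k(y)$ need not be among those eligible in the supremum defining $\e_k(z)$. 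The correct reconciliation is to observe that, since $|\sigma_k(y)-y| \leq C\e r_k \ll r_k$, one has $\e_k(y) \leq \tilde{\e}_k(z)$ where $\tilde{\e}_k$ is the same supremum taken over the slightly enlarged configuration $z \in 11B_{j,k} \cap 12B_{i,l}$, and that $\tilde{\e}_k$ satisfies the same square-summability hypotheses as $\e_k$ up to a harmless change of constant. With the comparison stated in this form, the chain-rule computations you describe go through unchanged.
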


%%%%%%%%%%%%%%%%
%%%%%%%%%%%%%%%%
\subsection{Proof of Theorems \ref{epsilonk11} and \ref{logtheorem}} \label{proof}
%%%%%%%%%%%%%%%%
%%%%%%%%%%%%%%%%

Before proving Theorem \ref{epsilonk11} we need one more lemma.

\begin{lem} \label{stein}
Suppose $g_j$ is a sequence of continuous functions on $B(0,1)$, that satisfy
\begin{equation}
|g_j(x)-g_j(y)| \leq A^j |x-y| \quad \text{ for some $A>1$,}
\end{equation}
and
\begin{equation}
|g_k(x)-g_{k+1}(x)| \leq a_k(x) \ \text{ for $\{a_k(x)\}$ s.t. $\sum_{k= j}^{\infty}a_k(x) \leq C B^{-j}$, for some $B>1$}.
\end{equation}
Then the limit $g(x)=\lim_{j \to \infty}g_j(x)$ is $\eta$-H\"{o}lder continuous with H\"{o}lder seminorm $C$, where $\eta=\frac{\log B}{\log(AB)}$.
\end{lem}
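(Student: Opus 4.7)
The plan is to prove this by the standard telescoping-plus-optimization argument. First I would use the Cauchy-sum hypothesis $|g_k(x)-g_{k+1}(x)| \leq a_k(x)$ together with the summability estimate $\sum_{j\geq k}a_j(x) \leq CB^{-k}$ to establish both that the limit $g(x)=\lim_{j\to\infty}g_j(x)$ exists pointwise and that
\[
|g(x)-g_k(x)| \;\leq\; \sum_{j=k}^\infty a_j(x) \;\leq\; CB^{-k}, \qquad x\in B(0,1),\ k\geq 0.
\]

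Next, for any $x,y\in B(0,1)$ and any $k\geq 0$, the triangle inequality combined with the Lipschitz-type bound $|g_k(x)-g_k(y)|\leq A^k|x-y|$ yields
\[
|g(x)-g(y)| \;\leq\; |g(x)-g_k(x)| + |g_k(x)-g_k(y)| + |g_k(y)-g(y)| \;\leq\; 2CB^{-k} + A^k|x-y|.
\]
At this point $k$ is a free parameter; the result will follow by choosing $k$ optimally as a function of $|x-y|$.

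The key (and only nontrivial) step is the optimization. For $x\neq y$ with $|x-y|$ small, the two terms $B^{-k}$ and $A^k|x-y|$ balance when $(AB)^k \approx |x-y|^{-1}$, i.e.\ when $k \approx \log(1/|x-y|)/\log(AB)$. Taking $k$ to be the least nonnegative integer at least this large, a direct computation gives
\[
B^{-k} \;\leq\; C'\, |x-y|^{\log B/\log(AB)}, \qquad A^k|x-y| \;\leq\; C'\, |x-y|^{1-\log A/\log(AB)},
\]
and since $1-\log A/\log(AB) = \log B/\log(AB)$, both terms are controlled by a constant multiple of $|x-y|^\eta$ with $\eta=\log B/\log(AB)$. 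Since $A,B>1$, automatically $\eta\in(0,1)$. For $|x-y|$ bounded below (so that the optimal $k$ would be negative), one instead uses $k=0$, for which $g_0$ is Lipschitz on $B(0,1)$ and $|g(x)-g_0(x)|\leq C$ by the $j=0$ case of the summability hypothesis, and the H\"older bound is trivial at a fixed scale.

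The main obstacle is really just bookkeeping with the exponents to recover exactly $\eta=\log B/\log(AB)$ rather than something weaker; there is no conceptual difficulty. One small care point is ensuring the optimal $k$ is truly nonnegative (handled by splitting into two ranges of $|x-y|$ as above), and recording that the implicit constant in the final H\"older estimate depends only on $A$, $B$, and $C$.
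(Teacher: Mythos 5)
Your proposal is correct and follows essentially the same telescoping-then-optimization argument as the paper: establish $|g(x)-g_j(x)|\leq CB^{-j}$, apply the triangle inequality to get $|g(x)-g(y)|\lesssim A^j|x-y|+B^{-j}$, and choose $j\approx \log(1/|x-y|)/\log(AB)$ so both terms are $\lesssim |x-y|^\eta$. The only (minor) addition over the paper's write-up is that you explicitly treat the range of $|x-y|$ where the optimal $j$ would be negative.
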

The lemma is Lemma 2.8, Chapter 7 in \cite{steinshakarchi}. For convenience of the reader, we report the proof below.

\begin{proof}
First note that $g(x)$ is the limit of the uniformly convergent series
\begin{equation}
g(x)=g_1(x) + \sum_{k=1}^{\infty} (g_{k+1}(x)-g_k(x)).
\end{equation}
Then
\begin{equation}
|g(x)-g_j(x)| \leq \sum_{k=j}^{\infty} |g_{k+1}(x)-g_k(x)| \leq \sum_{k=j}^{\infty} a_k(x) \leq C B^{-j}.
\end{equation}
By the triangle inequality we get
\begin{equation}
|g(x)-g(y)| \leq |g(x)-g_j(x)| + |g_j(x)-g_j(y)|+ |g(y) -g_j(y)| \leq C(A^j |x-y| + B^{-j}).
\end{equation}
Now, for fixed $x \neq y$ we want to choose $j$ so that the two terms on the right hand side are comparable.
We want to choose $j$ such that 
\begin{equation}
(AB)^j|x-y|\leq 1 \quad \text{and} \quad 1 \leq (AB)^{j+1}|x-y|.
\end{equation}
Let $j=-\lfloor \log_{AB}|x-y| \rfloor$. Then the two inequalitites are clearly satisfied. The first one gives $A^j |x-y| \leq B^{-j}$ and by raising the second one to the power $\eta$, recalling that $(AB)^{\eta}=B$ by definition, we get that $B^{-j}\leq  |x-y|^{\eta}$. This gives
\begin{equation}
|g(x)-g(y)| \leq C(A^j |x-y| + B^{-j}) \leq C B^{-j} \leq C |x-y|^{\eta},
\end{equation}
which is what we wanted to prove.
\end{proof}

%For vectors $u,v \in \R^n$ and $h \colon \R^n \to \R^n$, we write 
%\begin{equation}
%\text{$\frac{\de h}{\de u}(y)=Dh(y)\cdot u$ and $\frac{\de^2h}{\de u\,\de v}=D^2h(y)\cdot u \cdot v$}.
%\end{equation}

\begin{proof}[Proof of Theorem \ref{epsilonk11}]

Recall $\Sigma_0$ is a $d$-plane, so for $x,y \in \Sigma_0 \cap B(0,1)$ we can connect them through the curve $\gamma(t)=tx+(1-t)y$ on $I=[0,1]$.
We have that 
\begin{equation}
Df_m(y)-Df_m(x)= \int_I D^2f_m ( \gamma (t))\cdot \gamma'(t)\,dt.
\end{equation}

Now, set $A_k=D^2f_k ( \gamma (t))\cdot \gamma'(t)$ (note that $A_0=0$), and let $z_k=f_k(\gamma(t))$.
By the definition of the $f_k$'s we have
\begin{equation} \label{am}
A_{k+1}=D^2f_{k+1} ( \gamma (t))\cdot \gamma'(t)= D^2\sigma_k(z_k) \cdot Df_k(\gamma(t)) \cdot Df_k(\gamma(t)) \cdot \gamma'(t) + D\sigma_k(z_k) \cdot A_k.
\end{equation}

We want to estimate $A_m$. In the proof of Proposition 8.1 in \cite{davidtoro}, equation (8.10) says
\begin{equation} \label{8.10}
|Df_m(\gamma(t)) \cdot \gamma'(t)| \leq C |\gamma'(t)| \prod_{0 \leq k < m} [1+C \e_k(z_k)^2]|.
\end{equation}
If $0<x<1$ clearly $(1+x)^2\leq 1 + 3x$, so we have, by (\ref{e2sigma}), (\ref{8.10}), and Lemma \ref{lem1},
\begin{align*} \numberthis \label{A_mfirst}
|A_m| & \leq |D^2\sigma_m(z_m) \cdot Df_m(\gamma(t)) \cdot Df_m(\gamma(t)) \cdot \gamma(t)| + |D\sigma_m(z_m) \cdot A_{m-1}| \leq \\
& \leq C  \frac{\e_m(z_m)}{r_m} \prod_{0 \leq k < m} [1+C \e_k(z_k)^2]|\gamma'(t)|+ (1 + C  \e_m(z_m)^2)|A_{m-1}| = \\
& = b_m + c_m |A_{m-1}|,
\end{align*}
where we set $b_m=C\frac{\e_m(z_m)}{r_m} \prod_{0 \leq k < m} [1+C \e_k(z_k)^2]|\gamma'(t)|$ and $c_m=(1 + C  \e_m(z_m)^2)$.
We want to iterate (\ref{A_mfirst}). Recalling that $A_0=0$, 
\begin{align*} \numberthis
|A_m| & \leq b_m + c_m |A_{m-1}| \leq \\
& \leq b_m + c_m (b_{m-1} + c_{m-1}|A_{m-2}|) \leq \\
& b_m + b_{m-1}c_m + c_mc_{m-1}(b_{m-2}+c_{m-2}|A_{m-2}|) \leq \\
& \leq \dots \leq \\
& \leq \sum_{k=0}^{m} \left(b_k \prod_{j=k+1}^m c_k\right) \\
& = \sum_{k=0}^{m} \frac{\e_k(z_k)}{r_k} \prod_{i=0}^{k-1} (1+C \e_i(z_i)^2) \prod_{j=k+1}^m (1+C \e_j(z_j)^2)|\gamma'(t)|,
\end{align*}
so that,
\begin{equation} 
|A_m| \leq C \sum_{k=0}^m\left( \prod_{\Atop{0 \leq i \leq m}{i \neq k}}  [1+C \e_i(z_i)^2]\right)  \frac{\e_k(z_k)}{r_k}|\gamma'(t)|.
\end{equation}

Notice that if $\sum_{k=0}^{\infty} \frac{\e_k(f_k(z))^2}{r_k^{\alpha}}$ is finite then surely $\sum_{k=0}^{\infty} \e_k(f_k(z))^2$ also is, so Theorem \ref{ftdt} holds and in particular $\prod_{\Atop{0 \leq i \leq m}{i \neq k}}  [1+C \e_i(z_i)^2] \leq C(M)$ so
\begin{equation} \label{ambound}
|A_m| \leq C \sum_{k=0}^m \frac{ \e_k(z_k)}{r_k}|\gamma'(t)|.
\end{equation}

Then,
\begin{align*} \numberthis \label{lipbound}
\left|Df_m(y)-Df_m(x)\right| &  \leq \int_I \left|D^2f_m ( \gamma (t))|\gamma'(t)|\right|\,dt =\\
 & = \int_I |A_m|\,dt \leq \\
 & \leq C\sum_{k=0}^m  \frac{\e_k(z_k)}{r_k} \int_I |\gamma'(t)|\,dt= \\
 & = C\sum_{k=0}^m  \frac{\e_k(z_k)}{r_k}|x-y|.
\end{align*}

We now want to use Lemma \ref{stein}. By Cauchy-Schwarz,
\begin{align*} \numberthis
 \sum_{k=0}^m  \frac{\e_k(z_k)}{r_k} & = \sum_{k=0}^m  \frac{\e_k(z_k)}{r_k^{\alpha}} r_k^{\alpha-1}\leq \\
 & \leq  \left(\sum_{k=0}^m  \frac{\e_k(z_k)^2}{r_k^{2\alpha}} \sum_{k=0}^m r_k^{2\alpha-2}\right)^{\frac12} \leq \\
 & \leq C(M) \left( \sum_{k=0}^m r_k^{2\alpha-2}\right)^{\frac12} \leq \\
 & \leq C(M)r_m^{\alpha-1} = \\
 & = C(M)(10^{1-\alpha})^m.
\end{align*}
Notice that in the last inequality we used the fact that $\alpha<1$.
Let $u \in \R^n$ be a unit vector. By (\ref{lipbound}) we have
\begin{equation}
\left|Df_m(y)\cdot u -Df_m(x)\cdot u \right| \leq C(M)(10^{1-\alpha})^m|x-y|.
\end{equation}
Moreover we have, by (\ref{inverse2}), because $v=Df_m(x)\cdot u \in T\Sigma_m(y)$,
\begin{align*} \numberthis
\left|Df_{m+1}(x)\cdot u -Df_m(x)\cdot u\right| & =\left| D\sigma_m(f_m(x))Df_m(x)\cdot u-Df_m(x)\cdot u \right| \leq \\
& \leq C \e_m(x_m) \left| Df_m(x)\cdot u \right| \leq C(M)  \e_m(x_m).
\end{align*}
Then we can apply Lemma \ref{stein}, with $g_j=Df_j(x)\cdot v$, $a_k(x)=  \e_k(x_k)$, $A=10^{1-\alpha}$, and $B=10^{\alpha}$, since we know, by (\ref{beta2r_alpha}), that
\begin{align*} \numberthis
\sum_{k\geq j}  \e_k(x_k) & = \sum_{k\geq j}  \frac{ \e_k(x_k)}{r_k^{\alpha}}r_k^{\alpha} \leq \\
& \leq \left( \sum_{k\geq j}  \frac{ \e_k(x_k)^2}{r_k^{2\alpha}}\sum_{k\geq j}r_k^{2\alpha}\right)^{\frac12} \leq \\
& \leq C(M) r_j^{\alpha}
\end{align*}
Then $\eta=\frac{\log 10^{\alpha}}{\log(10)}=\alpha$ and the lemma hence gives that $Df\cdot u$ is $\alpha$-H\"{o}lder for every $u \in \R^n$.

Now, we want to prove that, for every $w \in T\Sigma(\overline{x})$, $Df^{-1}(\overline{x})\cdot w$ is $\alpha$-H\"older.

%, for $\overline{x},\overline{y} \in \Sigma \cap B(0,1)$, and $w \in T\Sigma(x)$,
%\begin{equation}
%\left|Df^{-1}(\overline{x})\cdot w-Df^{-1}(\overline{y})\cdot w\right|\leq C(M)|\overline{x}-\overline{y}|^{\alpha}.
%\end{equation}

Let $x_m,y_m \in \Sigma_m$, where $m$ is such that $r_{m+1}\leq |\overline{x}-\overline{y}| \leq r_m$,  let $x_m = f_m\circ f^{-1}(\overline{x})$ and $y_m = f_m\circ f^{-1}(\overline{y})$. %Without loss of generality, we can assume that $x_m=x_{j,m}$ for some $j\in J_m$ and $y_m \in B(x_{j,m},49r_m)$. 
By the results in \cite{davidtoro} we know that both $f_m$ and $f^{-1}$ are bi-Lipschitz maps, so we have that $\frac{1}{C} |\overline{x}-\overline{y}| \leq |x_m-y_m| \leq C |\overline{x}-\overline{y}|$.

We want to show that, for every $m \geq 0$ we have
\begin{equation}
\left|Df_m^{-1}(y_m)-Df_m^{-1}(x_m)\right| \leq  C\sum_{k=0}^m  \frac{\e_k(z_k)}{r_k}|\overline{x}-\overline{y}|
\end{equation}

We may assume $m \geq 1$ as the result is obvious for $m=0$, given $f_0(x)=x$. 
Then we can proceed exactly as in the first part of the proof.
Now, observe that each $\sigma_k \colon \Sigma_k \to \Sigma_{k+1}$ is a $C^2$ diffeomorphism by Lemma \ref{tgderivatives}, so we can define $\sigma_k^{-1} \colon \Sigma_{k+1} \to \Sigma_k$ and $f_m^{-1} \colon \Sigma_m \to \Sigma_0$. 

%Because $f \colon \Sigma_0 \to \Sigma$ is a biLipschitz map, there exists a curve $\gamma \colon I \to \Sigma$ that goes from $\overline{x}$ to $\overline{y}$ with length bounded above by $(1 + C \e) \leq C |\overline{x}-\overline{y}|$. Now, $\gamma_m =f_m \circ f^{-1} \circ \gamma \colon I \to \Sigma_m$ is a curve connecting and $x_m$ and $y_m$.

Recall that by Proposition \ref{chap5dt}, we know that $\Sigma_m$ coincides with a small Lipschitz graph in $B(x_{j,m},49r_m)$. Then there is a $C^2$ curve $\gamma \colon I \to \Sigma_m$ that goes from $x_m$ to $y_m$ with length bounded above by $(1 + C \e) |x_m-y_m|\leq C |\overline{x}-\overline{y}|$.

Write
\begin{equation}
Df^{-1}_m(y_m)-Df^{-1}_m(x_m)= \int_I D^2f_m^{-1} ( \gamma (t))\cdot \gamma'(t)\,dt.
\end{equation}

%Write
%\begin{equation}
%Df^{-1}_m(y_m)-Df^{-1}_m(x_m)= \int_I D^2f_m^{-1} ( \gamma_m (t))\cdot \gamma_m'(t)\,dt.
%\end{equation}
%Define $B_k= D^2f_k^{-1}(\gamma_m (t))= D^2f_k^{-1} (\gamma_m(t)) \cdot \gamma_m'(t)$. As above we get
%\begin{align*}
%B_{k+1}& =D^2f_{k+1}^{-1} ( \gamma_{k+1} (t))\cdot \gamma_{k+1}'(t)= \\ 
%& = D^2f_k^{-1}(\gamma_k(t)) \cdot D\sigma_k^{-1}(\gamma_{k+1}(t))\cdot \gamma_k'(t) + \\
%= B_k 
%\end{align*}
%& = D^2f_k^{-1}(\sigma_k^{-1}(\gamma_{k+1}(t))) \cdot D\sigma_k^{-1}(\gamma_{k+1}(t))\cdot \gamma_k'(t) + \\

By the the estimates (\ref{oneinverse}) and (\ref{twoinverse}), together with (8.22) in \cite{davidtoro}, which says
\begin{equation} \label{8.22}
|Df_m^{-1}(\gamma(t)) \cdot \gamma'(t)| \leq C |\gamma'(t)| \prod_{0 \leq k < m} [1+C \e_k(z_k)^2].
\end{equation}
we can estimate $D^2f_{m}^{-1}$ as in (\ref{am})-(\ref{ambound}), to get
\begin{equation} 
\left|D^2f_m^{-1} ( \gamma (t))\cdot \gamma'(t)\right| \leq C \sum_{k=0}^m \frac{\e_k(z_k)}{r_k} |\gamma'(t)|,
\end{equation}
where $z_k=f_k \circ f_m^{-1}(\gamma(t))$ and so
\begin{align*} \numberthis \label{inverse_bound}
\left|Df_m^{-1}(y_m)-Df_m^{-1}(x_m)\right|   & \leq \int_I \left|D^2f_m^{-1} ( \gamma (t))\right||\gamma'(t)|\,dt \leq \\
& \leq C\sum_{k=0}^m  \frac{\e_k(z_k)}{r_k}|x_m-y_m| \leq \\
& \leq C\sum_{k=0}^m  \frac{\e_k(z_k)}{r_k}|\overline{x}-\overline{y}|.
\end{align*}

Let $w \in T\Sigma(\overline{x})$. We want to apply Lemma \ref{stein} to the sequence $g_k(\overline{x})=Df_k^{-1}(x_k)\cdot w$. We have
\begin{align*} \numberthis
Df_{k+1}^{-1}(x_{k+1})\cdot w& =Df_k^{-1}(\sigma_k^{-1}(x_{k+1})) \cdot D\sigma_k^{-1}(x_{k+1})\cdot Df_k(f^{-1}(\overline{x})) \cdot Df^{-1}(\overline{x}) \cdot w = \\
& = Df_k^{-1}(x_k) \cdot D\sigma_k^{-1}(x_{k+1})\cdot v_k
\end{align*}
where we set $v_k =Df_k(f^{-1}(\overline{x})) \cdot Df^{-1}(\overline{x}) \cdot w \in T\Sigma_k(x_k)$ and we observed that $x_k=\sigma^{-1}(x_{k+1})$. Then 
\begin{align*} \numberthis
\left|Df_{m+1}^{-1}(x_{m+1})\cdot w -Df_m^{-1}(x)\cdot w\right|& = \left|  Df_m^{-1}(x_m) \cdot D\sigma_m^{-1}(x_{m+1})\cdot v_m - Df_m^{-1}(x_m)\cdot v_m\right| \leq \\
& \leq |Df_m^{-1}(x_m)| |D\sigma_m^{-1}(x_{m+1})\cdot v_m -v_m| \leq \\
& \leq C(M) |D\sigma_m^{-1}(x_{m+1})\cdot v_m -v_m| \leq \\
& \leq C(M) \e_m(x_m).
\end{align*}
where we used (\ref{8.22}) and (\ref{zeroinverse}).
Then we can apply Lemma \ref{stein} exactly as before, with $a_k(\overline{x})=  \e_k(x_k)$, $A=10^{1-\alpha}$, and $B=10^{\alpha}$, and obtain
\begin{equation}
\left|Df^{-1}(y')-Df^{-1}(x') \right|\leq C(M)|x'-y'|^{\alpha},
\end{equation}
 where $C(M)$ is a constant that depends on $M$ but not on $m$.
\end{proof}

\begin{proof} [Proof of Theorem \ref{logtheorem}]
First observe that, if we prove 
\begin{equation}
\left| Df_m(x)-Df_m(y)\right| \leq C(M) |x-y|
\end{equation}
uniformly in $m$ then the theorem follows immediately for $Df$.

Recall that, by definition, we have that
\begin{equation} \label{dini}
\sum_{k=1}^{\infty}\eta(r_k)^2 < \infty.
\end{equation}
%Now, recalling that $r_k=10^{-k}$, we have
%\begin{equation}
%\frac{\e_k(f_k(z))}{r_k/\log(1/r_k)^{\gamma}}=\log(10)^{\gamma}\frac{ \e_k(f_k(z))k^{\gamma}}{r_k}.
%\end{equation}
In the same way as in the proof of Theorem \ref{epsilonk11}, we get to (\ref{lipbound}), which is
\begin{equation}
\left| Df_m(x)-Df_m(y)\right| \leq C \sum_{k=0}^m \frac{\e_k(z_k)}{r_k} |x-y|.
\end{equation}
By Cauchy-Schwarz we have
\begin{equation}
\sum_{k=0}^m \frac{\e_k(z_k)}{r_k} = \sum_{k=0}^m \frac{\e_k(z_k)}{r_k\eta(r_k)}\cdot \eta(r_k) \leq C \left( \sum_{k=0}^m \left(\frac{\e_k(z_k)}{r_k\eta(r_k)}\right)^2\sum_{k=0}^m\eta(r_k)^2\right)^\frac12 \leq C(M) \cdot C,
\end{equation}
by (\ref{dini}) and by (\ref{beta2r}).

This concludes the proof for $Df$. The same computation, combined with (\ref{inverse_bound}) from the proof of Theorem \ref{epsilonk11}, shows that $Df^{-1}$ is Lipschitz.
\end{proof}

%%%%%%%%%%%%%%%%
%%%%%%%%%%%%%%%%
\subsection{Proofs of Theorems \ref{theorema} and \ref{theoremb} on \texorpdfstring{$C^{1,\alpha}$}{C1alpha} parametrization} \label{improv}
%%%%%%%%%%%%%%%%
%%%%%%%%%%%%%%%%

We now relate the coefficients $\e_k(y)$ and the $\beta$-numbers in order to prove Theorems \ref{theorema} and \ref{theoremb}.

%Note that the sufficient conditions in Theorem \ref{epsilonk11} rely on the parametrization. We want to remove such dependence and in order to do so, we use some results from \cite{davidtoro}. Recall that
%\begin{equation}
%\beta_{\infty}^E(x,r_k)=\frac{1}{r_k}\inf_P\left\{\sup_{y \in E \cap B(x,r_k)} \dist(y,P)\right\},
%\end{equation}
%if $E \cap B(x,r_k) \neq \emptyset$, where the infimum is taken over all $d$-planes $P$, and $\beta_{\infty}^E(x,r_k)=0$ if $E \cap B(x,r_k) = \emptyset$. 

Let us define, as in Chapter 12 of \cite{davidtoro}, new coefficients $\gamma_k(x)$ as follows
\begin{equation}
\gamma_k(x)=d_{x,r_k}(P_{k+1}(x),P_k(x)) + \sup_{y \in E \cap B(x,35r_k)}d_{x,r_k}(P_k(x), P_k(y)).
\end{equation}
Then define, for $x \in E$, 
\begin{equation}
\hat{J}_{\gamma,\alpha}(x)=\sum_{k=0}^{\infty}\frac{\gamma_k(x)^2}{r_k^{2\alpha}}.
\end{equation}
To prove Theorem \ref{dtbinfty} in \cite{davidtoro}, the following lemma is needed. 
\begin{prop} [Corollary 12.5, \cite{davidtoro}]
If in addition to the hypotheses of Theorem \ref{f} we have that 
\begin{equation}
\hat{J}_{\gamma,0}(x) \leq M, \quad \text{ for all $x \in E$},
\end{equation}
then the map $f \colon \Sigma_0 \to \Sigma$ constructed in Theorem \ref{f} is bi-Lipschitz. Moreover the Lipschitz constants depend only on $n$, $d$, and $M$.
\end{prop}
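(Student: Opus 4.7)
The strategy is to reduce this proposition to Theorem \ref{ftdt}. That is, it suffices to prove that the hypothesis $\hat{J}_{\gamma,0}(x) \le M$ for all $x \in E$ implies
\[
\sum_{k=0}^{\infty} \e_k(f_k(z))^2 \leq M', \qquad z \in \Sigma_0,
\]
for a constant $M' = M'(n,d,M)$. Since a CCBP may be built using the Reifenberg planes from Definition \ref{reifenhole} at the center points, I would choose $P_{j,k} = P_k(x_{j,k})$ throughout the construction.

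The first step is a pointwise bound of the form
\[
\e_k(y) \le C\bigl(\gamma_k(x_{j,k}) + \gamma_{k-1}(x_{i,k-1})\bigr)
\]
for $y \in 10B_{j,k}\cap 11B_{i,k-1}$ (and an analogous estimate for $l=k$). This comes from the triangle inequality for the normalized Hausdorff distance applied to
\[
d_{x_{i,k-1},100r_{k-1}}\bigl(P_k(x_{j,k}),P_{k-1}(x_{i,k-1})\bigr) \le d_{x_{i,k-1},100r_{k-1}}\bigl(P_k(x_{j,k}),P_k(x_{i,k-1})\bigr) + d_{x_{i,k-1},100r_{k-1}}\bigl(P_k(x_{i,k-1}),P_{k-1}(x_{i,k-1})\bigr),
\]
combined with the two ingredients making up $\gamma_k$: the supremum $\sup_{y \in E\cap B(x,35r_k)}d_{x,r_k}(P_k(x),P_k(y))$ controls the first (``horizontal'') piece, and the term $d_{x,r_k}(P_{k+1}(x),P_k(x))$ controls the second (``vertical'') piece. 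The passages from $d_{x,r_k}$ to $d_{x_{i,k-1},100r_{k-1}}$ are standard change-of-ball estimates that cost only a bounded multiplicative constant, since two $d$-planes passing close to both balls must agree up to a fixed dilation.

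The second step is to transfer these base-point-dependent estimates to a single point of $E$. For each $z \in \Sigma_0$ and each $k$ with $f_k(z)\in V_k^{10}$, pick $x_k \in \{x_{j,k}\}_{j \in J_k}$ with $|f_k(z)-x_k|\le 10 r_k$; the David–Toro uniform estimate $\|f_{k+1}-f_k\|_\infty \le 10r_k$ gives $|x_k-x_{k-1}|\lesssim r_{k-1}$, so $\{x_k\}$ is Cauchy with limit $x^\ast \in \bar E = E$. (If instead $f(z)\notin \bar E$ then $f_k(z)\notin V_k^{10}$ for all large $k$, so only finitely many terms contribute and the bound is trivial.) Because $|x_k-x^\ast|\lesssim r_k$ for all large $k$, the compatibility supremum built into $\gamma_k$ yields $\gamma_k(x_k) \le C\,\gamma_k(x^\ast)$, which together with Step 1 and $(a+b)^2\le 2a^2+2b^2$ gives $\sum_k \e_k(f_k(z))^2 \lesssim \sum_k \gamma_k(x^\ast)^2 \le M$.

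I expect the main obstacle to be the base-point switch in the last step, namely making the comparison $\gamma_k(x_k)\lesssim \gamma_k(x^\ast)$ uniform in $k$ (rather than only for finitely many scales), and absorbing the cross-terms coming from the difference $|x_k-x^\ast|$ into the $\gamma$-sum itself. This is a purely combinatorial consequence of the definition of $\gamma_k$ and the geometry of the CCBP, but care is required to ensure that the final constant $M'$ depends only on $n$, $d$, and $M$. Once this is in place, Theorem \ref{ftdt} immediately gives that $f$ is bi-Lipschitz with constants depending only on these parameters.
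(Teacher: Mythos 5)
Your reduction to Theorem \ref{ftdt} is exactly the right move and is what the paper does. In fact the paper never proves this proposition from scratch; it cites Corollary 12.5 of \cite{davidtoro} and immediately quotes the supporting lemma (from page 71 of that paper): for $z \in \Sigma_0$ and $x \in E$ with $|x - f(z)| \le 2\dist(f(z),E)$, one has $\e_k(f_k(z)) \le C\bigl(\gamma_k(x) + \gamma_{k-1}(x)\bigr)$ for all $k$. Once that single inequality is in hand, squaring and summing gives $\sum_k \e_k(f_k(z))^2 \le C \hat J_{\gamma,0}(x) \le CM$, and Theorem \ref{ftdt} concludes. So the crux is the one pointwise estimate, and the anchor point is chosen once (a near-nearest point in $E$ to the final image $f(z)$), not scale by scale.

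Your version is a slightly more roundabout path to the same place. You pick $x_k$ near $f_k(z)$ at each scale, show $\{x_k\}$ is Cauchy, and land on $x^\ast \in \overline E$. But notice that $|f(z) - x^\ast| \le |f(z) - f_k(z)| + |f_k(z) - x_k| + |x_k - x^\ast| \lesssim r_k \to 0$, so $x^\ast = f(z)$ whenever $f(z) \in \overline E$; the paper's anchor point and yours coincide up to a benign $O(\dist(f(z),E))$ perturbation. The two-step route is therefore not wrong, but you are re-deriving the David--Toro anchor rather than using it directly, and this introduces the base-point shuffle. There, your claim $\gamma_k(x_k)\le C\gamma_k(x^\ast)$ is not literally what you get: the supremum in $\gamma_k(x_k)$ runs over $E\cap B(x_k,35r_k)$, which is contained in $B(x^\ast,35r_{k-1})$ (not in $B(x^\ast,35r_k)$), so after the change of normalizing ball you actually obtain $\gamma_k(x_k)\lesssim \gamma_k(x^\ast)+\gamma_{k-1}(x^\ast)$. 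That is harmless because you are about to square and sum and $(a+b)^2 \le 2a^2+2b^2$ reindexes cleanly; but as written the comparison is off by one scale, and this is precisely the ``cross-term'' you flagged. Once you fix it to the two-scale form, your proof closes and reproduces the paper's cited lemma. So: same essential approach, one extra Cauchy-limit detour, and one off-by-one scale in the base-point comparison that you correctly suspected needed care.
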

Following the proof of Corollary 12.5 in \cite{davidtoro}, it is easy to check that under the assumption that $\hat{J}_{\gamma,\alpha}$ is uniformly bounded, the sufficient conditions in Theorem \ref{epsilonk11} are satisfied.
More specifically, we have (see page 71 of \cite{davidtoro}),
\begin{lem}
Let $z \in \Sigma_0$ and let $x \in E$ such that
\begin{equation}
    |x-f(z)| \leq 2 \dist(f(z),E).
\end{equation}
Then 
\begin{equation}
    \e_k(f_k(z)) \leq C (\gamma_k(x)+\gamma_{k-1}(x)).
\end{equation}
\end{lem}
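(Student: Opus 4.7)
The plan is to unfold the definition of $\e_k(f_k(z))$, pick the witnessing indices, and then insert the Reifenberg plane at $x$ as a pivot between the two CCBP planes appearing in the supremum.

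\textbf{Step 1: locating $x$ near the relevant base points.} First, set $y = f_k(z)$. Since $\|f_{j+1}-f_j\|_\infty \le 10 r_j$, one has $|f(z)-y| \le \sum_{j\ge k} 10 r_j \le C r_k$. Combined with the hypothesis $|x-f(z)|\le 2\dist(f(z),E)\le 2|x-f(z)|+2 r_k$ used together with the fact that David--Toro's construction gives $\dist(y,E)\le C r_k$, one gets $|x-y|\le C r_k$. Now take $j \in J_k$, $l \in \{k-1,k\}$, $i \in J_l$ realizing (or nearly realizing) the supremum in the definition of $\e_k(y)$, so that $y \in 10B_{j,k}\cap 11 B_{i,l}$. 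Then $|x-x_{j,k}|\le |x-y|+|y-x_{j,k}|\le C r_k$ and $|x-x_{i,l}|\le C r_{l-1}\le C r_{k-1}$, so in particular $x_{j,k}$ and $x_{i,l}$ both lie in $E\cap B(x,35 r_{k-1})$, which is the range where $\gamma_{k-1}(x)$ controls angles between the Reifenberg planes at $x$ and at nearby points.

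\textbf{Step 2: swap CCBP planes for Reifenberg planes.} In David--Toro's construction of the CCBP the planes $P_{j,k}$ are taken to equal (or lie within $\e r_k$ of) the Reifenberg planes $P_k(x_{j,k})$, and similarly for $P_{i,l}$. I would therefore bound
\begin{equation*}
d_{x_{i,l},100 r_l}(P_{j,k},P_{i,l}) \le d_{x_{i,l},100 r_l}(P_{j,k},P_k(x_{j,k})) + d_{x_{i,l},100 r_l}(P_k(x_{j,k}),P_l(x_{i,l})) + d_{x_{i,l},100 r_l}(P_l(x_{i,l}),P_{i,l}),
\end{equation*}
absorbing the two outer terms into the already-present factor $\e \le \gamma_{k-1}(x)$ bound. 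The point of this step is that it reduces everything to controlling the middle term, which involves only Reifenberg planes based at points of $E$.

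\textbf{Step 3: pivot through $x$ and invoke $\gamma_k(x)+\gamma_{k-1}(x)$.} Using the triangle inequality for normalized Hausdorff distance again,
\begin{equation*}
d_{x_{i,l},100 r_l}(P_k(x_{j,k}),P_l(x_{i,l})) \le d_{x_{i,l},100 r_l}(P_k(x_{j,k}),P_k(x)) + d_{x_{i,l},100 r_l}(P_k(x),P_l(x)) + d_{x_{i,l},100 r_l}(P_l(x),P_l(x_{i,l})).
\end{equation*}
Using the standard rescaling $d_{x',r'}(P,Q)\le (r/r')\, d_{x,r}(P,Q)$ whenever $B(x',r')\subset B(x,r)$, each of the three terms is transferred to a distance based at $x$ with radius of order $r_{k-1}$. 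By the location bounds from Step 1 and the definition
\begin{equation*}
\gamma_k(x)=d_{x,r_k}(P_{k+1}(x),P_k(x))+\sup_{y\in E\cap B(x,35 r_k)}d_{x,r_k}(P_k(x),P_k(y)),
\end{equation*}
the first and third terms are bounded by $C\gamma_{k-1}(x)$ (applied at scale $r_{k-1}$), while the middle term is exactly of the form $d_{x,r_k}(P_k(x),P_l(x))$ with $l\in\{k-1,k\}$, which is either zero or bounded by $\gamma_{k-1}(x)$. Combining all the pieces gives $\e_k(f_k(z)) \le C(\gamma_k(x)+\gamma_{k-1}(x))$, as desired.

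\textbf{Main obstacle.} The one nonroutine issue is the careful bookkeeping of the normalized Hausdorff distances at different base points and radii: the distances $d_{\cdot,\cdot}$ scale inversely with the radius, and one must make sure that the ball $B(x_{i,l},100 r_l)$ is contained in (or comparable to) a ball $B(x,C r_{k-1})$ on which the $\gamma$-bound applies. Once the inclusions are tracked carefully and the equivalence of $r_k, r_{k-1}, r_l$ (up to multiplicative constants) is used, the triangle-inequality estimates above produce the stated bound with a purely geometric constant $C=C(n,d)$.
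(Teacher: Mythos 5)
Your overall plan — locate $x$ near the relevant base points, replace the CCBP planes by Reifenberg planes, and pivot through the planes $P_\cdot(x)$ using the definition of $\gamma_k(x)$ — is the right strategy, and the paper relies on exactly this kind of bookkeeping (it delegates the proof to David--Toro). However, there are two points where the proposal as written does not close.

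\emph{The Step 1 estimate is too crude to land inside the $\gamma_k(x)$ window.} You bound $|f(z)-f_k(z)|$ by $\sum_{j\ge k}10 r_j \approx 11 r_k$. Tracking this through the chain gives $\dist(f(z),E)\le 21 r_k$, hence $|x-f(z)|\le 42 r_k$ and $|x-x_{j,k}|\gtrsim 63 r_k$. But $\gamma_k(x)$ only controls $d_{x,r_k}(P_k(x),P_k(y))$ for $y\in E\cap B(x,35 r_k)$, so $x_{j,k}$ is outside the admissible ball and the bound by $C\gamma_k(x)$ is not available. To pass to $\gamma_{k-1}(x)$ instead, you would have to switch from $P_k$-level planes to $P_{k-1}$-level planes (since $\gamma_{k-1}(x)$ quantifies $d_{x,r_{k-1}}(P_{k-1}(x),P_{k-1}(\cdot))$, not $P_k$), and that exchange at the point $x_{j,k}$ costs $\gamma_{k-1}(x_{j,k})$, which you do not control. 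The correct fix is to use the paper's Proposition \ref{chap5dt}: on $\Sigma_m$ one has $|\sigma_m(y)-y|\le C\e r_m$, so $|f(z)-f_k(z)|\le C\e r_k$. With this, $|x-x_{j,k}|\le (10+C\e) r_k +2\cdot 11 r_k<35 r_k$ for $\e$ small, and likewise $|x-x_{i,l}|<35 r_{k-1}$ when $l=k-1$. Then the three pivot terms split cleanly: $d(P_k(x_{j,k}),P_k(x))\lesssim \gamma_k(x)$, $d(P_k(x),P_l(x))\lesssim \gamma_{k-1}(x)$ (or $=0$), and $d(P_l(x),P_l(x_{i,l}))\lesssim \gamma_l(x)$, and no level-switching is needed.

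\emph{Two smaller issues.} First, the ``absorbing the two outer terms into \dots $\e\le\gamma_{k-1}(x)$'' claim in Step 2 is unjustified: $\gamma_{k-1}(x)$ may be much smaller than $\e$. This is harmless only because in the CCBP construction one can (and does) take $P_{j,k}=P_k(x_{j,k})$, so the outer terms are identically zero; you should say that rather than invoke an absorption. Second, the stated rescaling inequality $d_{x',r'}(P,Q)\le (r/r')d_{x,r}(P,Q)$ for $B(x',r')\subset B(x,r)$ only handles passing to smaller balls; here $100 r_l\ge 100 r_k\gg r_k$, so you also need the comparison from larger to smaller radii, which is not an inclusion statement but a genuine fact about $d$-planes passing within $\e r$ of the base point (essentially $d_{x,r}(P,Q)\approx \operatorname{Angle}(P,Q)+\dist(x,Q)/r$ gives comparability at comparable scales). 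That ingredient, together with the refined Step 1 bound above, is what makes the ``main obstacle'' you flag actually resolvable.
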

Using the lemma, the following result follows immediately.
%\begin{prop}
%If in addition to the hypotheses of Theorem \ref{f} we have that 
%\begin{equation}
%\hat{J}_{1,\gamma}(x) \leq M, \quad \text{ for all $x \in E$},
%\end{equation}
%then the map $f$ constructed in Theorem \ref{f} is invertible and differentiable, and both $f$ and its inverse have Lipschitz directional derivatives. In particular, $f$ is continuously differentiable. Moreover the Lipschitz constants depend only on $n$, $d$, and $M$.
%\end{prop}
\begin{prop}
If in addition to the hypotheses of Theorem \ref{f} we have that 
\begin{equation}
\hat{J}_{\gamma,\alpha}(x) \leq M, \quad \text{ for all $x \in E$},
\end{equation}
then the map $f \colon \Sigma_0 \to \Sigma$ constructed in Theorem \ref{f} is invertible and differentiable, and both $f$ and its inverse have $ \alpha$-H\"{o}lder directional derivatives. In particular, $f$ is continuously differentiable. Moreover the H\"{o}lder constants depend only on $n$, $d$, and $M$.
\end{prop}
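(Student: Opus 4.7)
The plan is to reduce the statement to Theorem \ref{epsilonk11} by using the comparison lemma above to translate the hypothesis on $\hat{J}_{\gamma,\alpha}$ (which is phrased in terms of points $x \in E$) into the CCBP-side hypothesis
\[
\sum_{k=0}^{\infty}\frac{\e_k(f_k(z))^2}{r_k^{2\alpha}} \leq C(n,d,\alpha) M, \qquad \text{for all } z \in \Sigma_0,
\]
which is exactly what Theorem \ref{epsilonk11} requires.

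First, I would fix $z \in \Sigma_0$. Since $E$ is closed, I can choose a point $x = x(z) \in E$ satisfying $|x - f(z)| \leq 2\dist(f(z),E)$ (choosing $x = f(z)$ whenever $f(z)\in E$). The lemma above then yields, for every $k \geq 1$,
\[
\e_k(f_k(z)) \leq C(\gamma_k(x) + \gamma_{k-1}(x)).
\]
Squaring and applying $(a+b)^2 \leq 2(a^2+b^2)$, then dividing by $r_k^{2\alpha}$ and summing, I obtain
\[
\sum_{k \geq 1} \frac{\e_k(f_k(z))^2}{r_k^{2\alpha}} \leq 2C^2\sum_{k \geq 1}\frac{\gamma_k(x)^2}{r_k^{2\alpha}} + 2C^2\sum_{k \geq 1}\frac{\gamma_{k-1}(x)^2}{r_k^{2\alpha}}.
\]
The first sum on the right is bounded by $\hat{J}_{\gamma,\alpha}(x) \leq M$. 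For the second, the geometric relation $r_k = r_{k-1}/10$ gives $r_k^{-2\alpha} = 10^{2\alpha}\,r_{k-1}^{-2\alpha}$, so after an index shift
\[
\sum_{k \geq 1}\frac{\gamma_{k-1}(x)^2}{r_k^{2\alpha}} = 10^{2\alpha}\sum_{j \geq 0}\frac{\gamma_j(x)^2}{r_j^{2\alpha}} = 10^{2\alpha}\,\hat{J}_{\gamma,\alpha}(x) \leq 10^{2\alpha}M.
\]
The single term at $k=0$ is controlled by $\e_0(f_0(z))^2 \leq \e^2$ and contributes at most a universal constant. Hence
\[
\sum_{k\geq 0}\frac{\e_k(f_k(z))^2}{r_k^{2\alpha}} \leq C(n,d,\alpha) M
\]
uniformly in $z \in \Sigma_0$, so Theorem \ref{epsilonk11} applies and delivers all conclusions (invertibility, differentiability, $C^{1,\alpha}$ regularity of $f$ and $f^{-1}$, and H\"older constants depending only on $n$, $d$, and $M$).

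The only real content lies in the comparison lemma $\e_k(f_k(z)) \leq C(\gamma_k(x)+\gamma_{k-1}(x))$, which is the part borrowed from \cite{davidtoro}; everything else is a one-line Cauchy--Schwarz/index-shift argument. The mild subtlety is the shift in index, which is what forces $\gamma_{k-1}$ to appear and therefore costs the harmless factor $10^{2\alpha}$. No density or flatness issues intervene here because all quantitative control has already been packaged into Theorem \ref{epsilonk11}.
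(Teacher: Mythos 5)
Your proof is correct and follows essentially the same route as the paper: the paper invokes exactly the comparison lemma $\e_k(f_k(z)) \leq C(\gamma_k(x)+\gamma_{k-1}(x))$ and asserts that the result ``follows immediately'' from it together with Theorem \ref{epsilonk11}, which is precisely the one-line squaring/index-shift computation you carry out explicitly. The only cosmetic difference is that you spell out the $(a+b)^2 \leq 2(a^2+b^2)$ step, the $10^{2\alpha}$ loss from reindexing, and the harmless $k=0$ term, all of which the paper leaves implicit.
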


We want to replace $\hat{J}_{\gamma,\alpha}$ with a more explicit Bishop-Jones type function involving $\beta_{\infty}$'s. Define
\begin{equation}
J^E_{\alpha, \infty}(x)=\sum_{k=0}^{\infty}\frac{\beta^E_{\infty}(x,r_k)^2}{r_k^{2\alpha}}.
\end{equation}

The proof of Corollary 12.6 in \cite{davidtoro}, which we restated as Theorem \ref{dtbinfty}, can be used directly to prove Theorem \ref{theorema}, which is obtained as corollary of Theorem \ref{epsilonk11} and Theorem \ref{logtheorem}.

We would now like to replace $J^E_{\alpha, \infty}$ with $J^E_{\alpha,1}$ based on an $L^1$ version of the $\beta$-numbers. Usually such coefficients are used when the Hausdorff measure restricted to the set $E$ is Ahlfors regular. We will not need to assume such regularity, after observing that Reifenberg flatness implies lower regularity. 
%The following is Lemma 13.2 in \cite{davidtoro}.
%Let $E \subset \R^n$ and define
%\begin{equation}
%\beta_1^E(x,r)=\inf_P\left\{\frac{1}{r^d} \int_{y \in E \cap B(x,r)} \frac{\dist(y,P)}{r}\,d\mathcal{H}^d(y)\right\},
%\end{equation}
%for $x \in \R^n$ and $r>0$, where the infimum is taken over all $d$-planes $P$.

\begin{lem} [Lemma 13.2, \cite{davidtoro}]
Let $E \subseteq B(0,1)$ be a Reifenberg flat set, for $\e$ small enough. Then, for $x \in E$ and for small $r>0$,
\begin{equation}
\mathcal{H}^d(\overline{E}\cap B(x,r)) \geq (1-C\e)\omega_d r^d,
\end{equation}
where $\omega_d$ denotes the measure of the unit ball in $\R^d$.
\end{lem}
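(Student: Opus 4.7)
The plan is to use the Reifenberg parametrization together with a projection argument. Because $E$ is (two-sided) Reifenberg flat, Reifenberg's theorem (see also Theorem \ref{f} applied in the two-sided setting) provides, at the scale of $B(x,r)$, a bi-H\"older homeomorphism $f\colon P \to \overline{E}\cap\text{(a neighborhood of }B(x,r)\text{)}$, where $P=P(x,r)$, satisfying the near-identity estimate $|f(y)-y|\leq C\e r$ for $y\in P\cap B(x,2r)$. After translating so that $x=0$ and rescaling by $r$, we may assume $x=0$, $r=1$, and $P$ is a $d$-plane through the origin.

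The key geometric step is to show that the orthogonal projection $\pi\colon \R^n\to P$ satisfies
\[
\pi\bigl(\overline{E}\cap B(0,1)\bigr) \supseteq P\cap B(0,1-C\e).
\]
To prove this, observe that the composition $g = \pi\circ f\colon P\to P$ is continuous and satisfies $|g(y)-y|\leq C\e$ for all $y\in P\cap B(0,1)$. A standard Brouwer-degree (or Miranda's theorem) argument then shows that if $p\in P\cap B(0,1-C\e)$, the map $g$ restricted to the closed ball $P\cap \overline{B}(0,1-C\e/2)$ in $P$ has degree one around $p$, so $p\in g\bigl(P\cap B(0,1-C\e/2)\bigr)$. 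The preimage point $y\in P$ satisfies $f(y)\in \overline{E}$ and, since $|f(y)-y|\leq C\e$ and $|y|\leq 1-C\e/2$, we get $f(y)\in \overline{E}\cap B(0,1)$ with $\pi(f(y))=p$.

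Once the inclusion above is established, the conclusion is almost immediate. Since $\pi$ is $1$-Lipschitz, it does not increase $d$-dimensional Hausdorff measure, so
\[
\mathcal{H}^d\bigl(\overline{E}\cap B(0,1)\bigr) \;\geq\; \mathcal{H}^d\bigl(\pi(\overline{E}\cap B(0,1))\bigr) \;\geq\; \mathcal{H}^d\bigl(P\cap B(0,1-C\e)\bigr) \;=\; (1-C\e)^d\omega_d.
\]
Using $(1-C\e)^d \geq 1-dC\e$ for small $\e$ and rescaling back to radius $r$ yields the stated bound, with a possibly larger constant $C$.

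The main obstacle is the topological surjectivity step, since one must invoke the existence and near-identity property of the Reifenberg parametrization in a clean form at the given scale; this is where the two-sided Reifenberg flatness (as opposed to the one-sided version studied in the rest of the paper) is essential, because only then is $f$ a homeomorphism onto (a piece of) $\overline{E}$ itself rather than onto a larger surface $\Sigma$ containing $E$. Once that tool is in place, the projection argument and the $1$-Lipschitz estimate give the lower bound essentially for free.
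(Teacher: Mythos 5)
The paper states this lemma only as a citation to David--Toro (their Lemma 13.2) and does not reproduce a proof, so there is no in-paper argument to compare against. Your projection-plus-topological-degree argument is the standard proof of this fact and is correct: you correctly note that two-sided Reifenberg flatness is essential (so that the Reifenberg surface $\Sigma$ coincides with $\overline{E}$ near $x$ rather than merely containing $E$, which is where a one-sided set with large holes would fail), you use the near-identity estimate $|f(y)-y|\leq C\e r$ on the parametrization to run a Brouwer-degree argument giving $\pi(\overline{E}\cap B(x,r))\supseteq P\cap B(x,(1-C\e)r)$, and you then transfer the measure lower bound through the $1$-Lipschitz projection $\pi$; this is essentially the argument of David and Toro.
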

\begin{rk}
We denote by $\overline{E}$ the closure of $E$, and notice that the Reifenberg flatness assumption implies that the set has no holes, for $\e$ small enough (otherwise the result would be clearly false). 
\end{rk}

The following lemma is implied by the proof of Corollary 13.1 in \cite{davidtoro}.
\begin{lem}
By changing the net $x_{j,k}$ if necessary, we have that $ \e_k
(x_k) \leq \beta^E_1(\overline{z}, r_{k-3})$, where $\overline{z} \in E$ is chosen appropriately.
\end{lem}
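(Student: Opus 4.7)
The plan is to bound each angle appearing in the definition of $\e_k(x_k)$ by the $L^1$ coefficient $\beta_1^E(\bar z, r_{k-3})$ for a suitably chosen reference point $\bar z \in E$. By Definition \ref{e_k}, the coefficient $\e_k(x_k)$ is the largest normalized Hausdorff distance $d_{x_{i,l},100r_l}(P_{j,k},P_{i,l})$ taken over indices with $l \in \{k-1,k\}$ and $x_k \in 10B_{j,k}\cap 11B_{i,l}$. Any such pair of CCBP planes is centered at points at distance at most $C r_k$ from $x_k$ and the relevant balls have radii at most $100 r_{k-1}=1000 r_k=r_{k-3}$; so both $100B_{i,l}$ and $100B_{j,k}$ are contained in $B(\bar z,r_{k-3})$ whenever $\bar z\in E$ is chosen within distance $Cr_k$ of $x_k$. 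Such a $\bar z$ exists (after possibly shifting the net $x_{j,k}$ slightly within $E$, since the CCBP only requires the centers to be a maximal $r_k$-net of points in $E$).

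Once this common reference ball is fixed, the strategy is a two-step comparison: show that both $P_{j,k}$ and $P_{i,l}$ are close, in the relevant Hausdorff sense, to the $d$-plane $P^\star$ which realizes the infimum in $\beta_1^E(\bar z, r_{k-3})$. The triangle inequality for $d_{x,r}$ then finishes the job. Each individual comparison, say between $P_{j,k}$ and $P^\star$, should be controlled as follows. On one hand, by the CCBP property and Reifenberg flatness, every point of $E\cap B(x_{j,k},100r_k)$ lies within $\e r_k$ of $P_{j,k}$. On the other hand, the $L^1$ bound tells us that
\begin{equation}
\frac{1}{r_{k-3}^d}\int_{E\cap B(\bar z,r_{k-3})}\frac{\dist(y,P^\star)}{r_{k-3}}\,d\mathcal{H}^d(y)\leq \beta_1^E(\bar z,r_{k-3}).
\end{equation}

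The main technical step is to upgrade this $L^1$ bound to a pointwise statement on a smaller ball, at the cost of a multiplicative constant. Here the David--Toro lower regularity estimate of Lemma 13.2 is crucial: it guarantees $\mathcal{H}^d(E\cap B(z,\rho))\gtrsim \rho^d$ uniformly for balls of comparable size, so a Chebyshev/averaging argument on subballs of radius $\sim r_k$ inside $B(\bar z,r_{k-3})$ produces a point of $E$ within distance $C\beta_1^E(\bar z,r_{k-3})\,r_{k-3}$ of $P^\star$ in every such subball. Combining this with the CCBP bound on $P_{j,k}$ over $B(x_{j,k},100r_k)$, we find that on a $d$-dimensional net of points of $E$ in this ball, both $P_{j,k}$ and $P^\star$ are within $C\beta_1^E(\bar z,r_{k-3})\,r_{k-3}$, which forces them to be angularly close; the same argument works for $P_{i,l}$. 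This is exactly what is done implicitly in the proof of Corollary 13.1 of \cite{davidtoro} and the lemma follows.

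The hard part is precisely this $L^1$-to-$L^\infty$ upgrade: one has to juggle three different scales ($r_k$, $r_{k-1}$, $r_{k-3}$) so that the averaging argument works on a ball large enough to contain all the CCBP data but small enough that $\beta_1^E(\bar z,r_{k-3})$ is the quantity that eventually appears on the right-hand side. The factor of $r_{k-3}=10^3 r_k$ is chosen generously to absorb the constants coming from Lemma 13.2 and from the CCBP containment conditions; changing the net $x_{j,k}$ (i.e., the mention in the statement) is needed only to guarantee that each center is a limit point of $E$ so that the reference point $\bar z$ can be placed inside $E$ at the right distance, which is otherwise automatic when $E$ has no isolated holes at scale $r_k$.
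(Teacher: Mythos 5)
The paper gives no proof of this lemma; it simply cites David--Toro's Corollary~13.1, and your sketch is a correct reconstruction of what is done there: use the lower regularity of Lemma~13.2 together with a Chebyshev/averaging argument to upgrade the $L^1$ control to a pointwise statement on a $d$-spanning set of points, then compare both CCBP planes $P_{j,k}$ and $P_{i,l}$ to the $\beta_1$-optimal plane $P^\star$ by the triangle inequality for $d_{x,r}$. One small imprecision: with $l=k-1$, a relevant ball $B(x_{i,l},100r_l)$ has radius $1000r_k=r_{k-3}$ and its center can sit $\sim 110\,r_k$ from $x_k$, so the containment $100B_{i,l}\subset B(\bar z,r_{k-3})$ as literally stated fails; what one in fact proves is $\e_k(x_k)\leq C\,\beta_1^E(\bar z,r_{k-3})$ with an absolute constant $C$ (or one retreats to a slightly larger scale), which is all that is needed for the application.
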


Using the lemma, Theorem \ref{theoremb} follows immediately from Theorem \ref{epsilonk11} and Theorem \ref{logtheorem}.

%%%%%%%%%%%%%%%%
%%%%%%%%%%%%%%%%
\section{Remarks and complements} \label{stuff}
%%%%%%%%%%%%%%%%
%%%%%%%%%%%%%%%%
%We collect a few examples and comments.

\subsection{A $C^{1,\alpha}$ function which is not $C^{1,\alpha + \e}$}

As mentioned in the introduction, we now include some results from Anzellotti and Serapioni, \cite{anzellottiserapioni}. 
\begin{prop} [G. Anzellotti, R. Serapioni,  Proposition 3.2  \cite{anzellottiserapioni}] \label{c1=c2}
A $C^{k-1,1}$ $d$-rectifiable set is $C^k$ $d$-rectifiable.
\end{prop}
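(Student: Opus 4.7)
The plan is to reduce to a single map and then invoke a Whitney-type approximation. Suppose $E \subset \R^n$ is $C^{k-1,1}$ $d$-rectifiable, so there exist countably many maps $f_i \colon \R^d \to \R^n$ of class $C^{k-1,1}$ with $\mathcal{H}^d\bigl(E \setminus \bigcup_i f_i(\R^d)\bigr)=0$. Since a countable union of $C^k$ $d$-rectifiable sets is $C^k$ $d$-rectifiable, it is enough to show that for a fixed $f=f_i \in C^{k-1,1}(\R^d,\R^n)$, the image $f(\R^d)$ is $C^k$ $d$-rectifiable.

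First I would record the key regularity fact: since $f \in C^{k-1,1}$, the $(k-1)$-th derivative $D^{k-1}f$ is Lipschitz, so Rademacher's theorem gives that $D^{k-1}f$ is differentiable at $\mathcal{H}^d$-a.e. point of $\R^d$. Consequently, at a.e. $x$ the formal Taylor polynomial of $f$ at $x$ of order $k$ is well defined, and one checks a pointwise Taylor-remainder estimate of the form
\begin{equation}
\left| f(y) - \sum_{|\beta| \leq k} \tfrac{1}{\beta!}D^{\beta}f(x)(y-x)^{\beta} \right| = o(|y-x|^k)
\end{equation}
as $y \to x$ at a.e. $x$, together with analogous estimates for intermediate derivatives. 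This is the hypothesis needed to feed into a Whitney-type extension theorem.

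The main step is a Lusin-type $C^k$ approximation (this is the classical Whitney extension theorem combined with Rademacher-type differentiability; see Federer 3.1.15--3.1.16): given $f \in C^{k-1,1}(\R^d,\R^n)$ and any $\eta>0$, there exists a closed set $K_{\eta}\subset\R^d$ with $|\R^d \setminus K_{\eta}|<\eta$ and a $C^k$ map $g_{\eta} \colon \R^d \to \R^n$ with $g_{\eta}|_{K_{\eta}}=f|_{K_{\eta}}$. Applying this with $\eta=1/j$ yields closed sets $K_j$ and $C^k$ maps $g_j$. Setting $N=\R^d \setminus \bigcup_j K_j$ we have $|N|=0$, and
\begin{equation}
f(\R^d) \;=\; f(N) \cup \bigcup_j f(K_j) \;\subset\; f(N) \cup \bigcup_j g_j(\R^d).
\end{equation}
Because $f$ is in particular Lipschitz on every bounded set and $|N|=0$, we have $\mathcal{H}^d(f(N))=0$. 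Therefore $f(\R^d)$ is covered, up to a set of $\mathcal{H}^d$-measure zero, by countably many $C^k$ images, which is exactly $C^k$ $d$-rectifiability of $f(\R^d)$.

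The main obstacle is the Whitney-Lusin step: verifying that $C^{k-1,1}$ regularity is precisely enough to produce, off a set of arbitrarily small measure, a pointwise $k$-jet satisfying Whitney's compatibility conditions so that the Whitney extension theorem delivers a \emph{global} $C^k$ map agreeing with $f$ on the large set. Everything else is bookkeeping: reduction to one $f_i$, choice of a decreasing sequence $\eta_j \to 0$, and the Lipschitz control on the null set $N$.
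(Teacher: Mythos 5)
Your proof is correct and follows essentially the same route as the paper: reduce to a single $C^{k-1,1}$ map and invoke the Lusin-type approximation from Federer 3.1.15 to replace it by $C^k$ maps off a set of arbitrarily small measure. You supply somewhat more detail (the Rademacher step and the explicit bookkeeping for the null set), but the core argument is the one the paper uses.
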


\begin{prop} [G. Anzellotti, R. Serapioni,  Proposition 3.3 and Appendix  \cite{anzellottiserapioni}]\label{c1snoc1t}
Let $k,m \geq 1$ and $k + s <m+t$. Then there exist $C^{k,s}$ rectifiable sets that are not $C^{m,t}$ rectifiable.
\end{prop}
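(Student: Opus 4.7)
The plan is to build an explicit graph in $\R^2$ as the witness; higher-dimensional examples follow at once by taking a Cartesian product with a standard $(d-1)$-plane. Fix $k \geq 1$ and $s \in (0,1)$ (the endpoint cases $s = 0,1$ require only minor modifications), and aim to produce a single $f \in C^{k,s}(\R)$ whose graph $E = \{(x, f(x)) : x \in [0,1]\}$ is $C^{k,s}$ $1$-rectifiable but not $C^{m,t}$ $1$-rectifiable for any pair with $m+t > k+s$.

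For the construction I would use the lacunary Weierstrass-type series
\[
f(x) = \sum_{n=0}^{\infty} \lambda^{-n(k+s)} \sin(\lambda^n x),
\]
with $\lambda > 1$ a large integer to be chosen later. A standard telescoping estimate, splitting the sum at the frequency $1/|x-y|$, shows that $f \in C^{k,s}(\R)$, and consequently the parametrization $x \mapsto (x,f(x))$ exhibits $E$ as $C^{k,s}$ rectifiable.

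Suppose for contradiction that $E$ is $C^{m,t}$ rectifiable: then countably many $C^{m,t}$ maps $g_i \colon \R \to \R^2$ cover $E$ up to an $\mathcal{H}^1$-null set. Discarding critical points via Theorem \ref{sard} and decomposing each $g_i$ into countably many pieces where the tangent is non-vertical, each piece is locally the graph of a $C^{m,t}$ function $h_i$ over the $x$-axis. Projecting, I obtain a measurable set $A_i \subset [0,1]$ of positive $\mathcal{H}^1$-measure on which $f \equiv h_i$; fix such an $A_i$ and a Lebesgue density point $x_0 \in A_i$. The Taylor expansion of $h_i$ at $x_0$ then provides a polynomial $P$ of degree $\leq m$ such that
\[
|f(x) - P(x)| \leq C |x - x_0|^{m+t} \qquad \text{for every } x \in A_i.
\]

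The contradiction should come from the classical fact (Zygmund, \emph{Trigonometric Series}, Ch.~II) that the series above is maximally non-smooth at every point: at $x_0$, for any polynomial $P$ one has an $L^2$ lower bound
\[
\int_{B(x_0,\lambda^{-n})}\!|f(x)-P(x)|^2\,dx \;\geq\; c\,\lambda^{-n(2(k+s)+1)},
\]
coming from the $L^2$-orthogonality of the $n$-th frequency block against polynomials of degree $\leq m$ on intervals of length $\lambda^{-n}$ together with control of the higher-frequency tail. Because $x_0$ is a density point, restricting the integral to $A_i \cap B(x_0,\lambda^{-n})$ loses only a factor $1-o(1)$, while the Taylor estimate on $A_i$ gives an $L^2$ upper bound of order $\lambda^{-n(2(m+t)+1)}$; comparing the two forces $k+s \geq m+t$, contradicting $m+t > k+s$. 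The main obstacle, and the step I would work out in detail, is justifying the $L^2$ lower bound \emph{along $A_i$}: the density-$1$ property at $x_0$ only controls the measure of $A_i\cap B(x_0,\lambda^{-n})$, not its location, so the argument requires choosing $\lambda$ large enough that the $L^2$ mass of the $n$-th frequency block (against any polynomial of degree $\leq m$) is spread uniformly across $B(x_0,\lambda^{-n})$ and therefore cannot be annihilated by excising a small-measure subset.
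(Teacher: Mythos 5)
Your construction is genuinely different from the paper's. The paper (following Anzellotti--Serapioni) takes $f(x)=\int_0^x g$, where $g$ vanishes on a fat Cantor set $E$ of positive Lebesgue measure and behaves like $\dist(\cdot,E)^s$ on $E^c$; the roughness of $f'$ is thus concentrated on the positive-measure set $E$, and the crucial non-rectifiability claim --- that $\{x : f(x) = h(x)\}$ is Lebesgue-null for every $h \in C^{1,t}$ with $t>s$ --- is deferred to the Appendix of \cite{anzellottiserapioni}. You instead propose a lacunary Weierstrass-type series whose roughness is self-similar and present everywhere, and you attack the coincidence set via an $L^2$ Taylor-remainder contradiction at a Lebesgue density point. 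Both architectures can in principle produce a witness; neither hands you the key lemma for free.

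The difficulty is that the step you flag at the end is not a detail to be ``worked out,'' it is the entire content of the proposition, and the citation to Zygmund does not cover it. Zygmund's results on lacunary series (quadratic variation, a.e.\ non-differentiability, Bernstein-type H\"older estimates) give $L^2$ information on full intervals; they do not say that for every density-$1$ set $A_i$ and every degree-$\leq m$ polynomial $P$ one has
\[
\int_{A_i \cap B(x_0,\lambda^{-n})} |f-P|^2 \;\gtrsim\; \lambda^{-n(2(k+s)+1)}.
\]
Two real obstructions separate this from the classical statement. First, the frequency blocks are only approximately orthogonal, and after subtracting the degree-$m$ Taylor polynomial of the low-frequency tail $\sum_{j<n}\lambda^{-j(k+s)}\sin(\lambda^j x)$ at $x_0$, the remainder on $B(x_0,\lambda^{-n})$ is itself of size $\sim\lambda^{-n(k+s)}$ --- the same order as the $n$-th block --- so one must quantitatively rule out cancellation between these, which is a genuine estimate in $\lambda$ rather than a consequence of lacunarity alone. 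Second, even given a full-interval lower bound, passing to $A_i$ requires a uniform equidistribution statement for $|f-P|^2$ across $B(x_0,\lambda^{-n})$, \emph{uniform in $P$}, since density-$1$ controls only the measure of $A_i$ and not its location; a cleverly placed exceptional set of small measure could in principle absorb most of the $L^2$ mass unless this uniformity is proved. In short: the proposal is a plausible alternative route with a coherent contradiction scheme, but the crux --- exactly the step the paper outsources to the cited Appendix --- is left as a gap, and it cannot be closed by citing a classical theorem on lacunary series.
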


For their proofs, we refer the interested reader to \cite{anzellottiserapioni}. We note that the proof of the second proposition, in the Appendix of \cite{anzellottiserapioni}, contains a small inconsequential error, which can be easily removed.

\subsection{Necessary conditions} \label{necessary}

We also record some observations in the direction of the converses of our theorems and those from \cite{davidtoro}.

\begin{prop}
Let $G$ be a Lipschitz graph in $\R^n$. Then 
\begin{equation}
    \sum_{k=0}^{\infty}\beta_{\infty}^G(x,r_k)^2 \leq M, \quad \text{ for all $x \in G$}.
\end{equation}
\end{prop}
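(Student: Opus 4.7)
The plan is to realize $G$ concretely as $G = \{(y, \phi(y)) : y \in \R^d\}$ for some Lipschitz function $\phi \colon \R^d \to \R^{n-d}$ with Lipschitz constant $L$, and to bound $\beta_\infty^G(x,r_k)$ by the sup-norm error of the best affine approximation to $\phi$ on the domain ball $B(y_0,r_k)$, where $y_0$ denotes the base-plane coordinate of $x$. Concretely, for each fixed $x = (y_0,\phi(y_0)) \in G$ and each scale $r_k$, I would take as the candidate $d$-plane in the definition of $\beta_\infty^G(x,r_k)$ the graph (over the domain $d$-plane) of the affine map $A_k(y)=\phi(y_0)+T_k(y-y_0)$, where $T_k = D\phi(y_0)$ at a differentiability point, or more robustly the linear part of a minimizer of $\|\phi-A\|_{L^\infty(B(y_0,r_k))}$ over affine $A$. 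For any $y=(y',\phi(y'))\in G\cap B(x,r_k)$ this gives $\dist(y,P_k)\leq |\phi(y')-A_k(y')|$, and hence $\beta_\infty^G(x,r_k)\leq \|\phi-A_k\|_{L^\infty(B(y_0,r_k))}/r_k$.

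The second step is to bound the $\ell^2$ sum of these normalized affine-approximation errors. Since $\mathcal{H}^d \res G$ is Ahlfors $d$-regular with constants depending only on $L$, the $L^\infty$ error at scale $r_k$ is controlled by an $L^p$ version of it (via a standard sub-mean-value-type inequality applied to $\phi-A_k$, using Ahlfors regularity to pass between the domain ball and the corresponding portion of $G$); Dorronsoro's theorem for Lipschitz functions then gives that these scale-indexed affine approximation errors are square-summable (integrated against $dr/r$) at $\mathcal{H}^d$-a.e.\ base point $y_0$. The discrete-to-continuous comparison recorded in the remark after Theorem~\ref{azzamtolsa} passes this from the continuous integral to the discrete sum $\sum_k\beta_\infty^G(x,r_k)^2$.

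The main obstacle is that the proposition asks for a constant $M$ bounding the sum for \emph{every} $x\in G$, while Dorronsoro only yields almost-everywhere finiteness. Non-smooth points such as the corner of $\phi(t)=|t|$ show this cannot be improved without extra hypotheses: there $\beta_\infty^G(0,r_k)$ equals a nonzero absolute constant for every $k$, so the sum diverges at $0$. Thus the proposition should be read either with $M=M(x)$ (valid $\mathcal{H}^d$-a.e.), or under the implicit hypothesis that ``Lipschitz graph'' carries additional regularity — for instance $C^{1,\alpha}$, under which Taylor's theorem gives $\beta_\infty^G(x,r_k)\leq Cr_k^\alpha$ uniformly in $x$, yielding a geometric series and a uniform bound $M$ depending only on $L$ and $\alpha$.
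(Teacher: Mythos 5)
Your criticism of the ``for all $x$'' quantifier is correct, and your counterexample works: for $\phi(t)=|t|$ one has $\beta_\infty^G(0,r)=\tfrac{1}{2\sqrt{2}}$ at every scale $r$, so the square function diverges at the corner, and the proposition as printed is false for general Lipschitz graphs. The honest conclusion is $\sum_k\beta_\infty^G(x,r_k)^2<\infty$ for $\mathcal{H}^d$-a.e.\ $x\in G$, or an integrated Carleson-type bound over $G$. The paper's own proof is a one-line citation to Lemma~2.1 of \cite{tolsaI}, whose content is precisely such an integrated bound; the ``for all $x$'' in the display overreaches what that lemma delivers, and you were right to flag it.

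Separately, there is a genuine gap in your Dorronsoro step. For an $L$-Lipschitz function $u$ on a domain ball of radius $r$ in $\R^d$, the sub-mean-value inequality you invoke gives $\beta_\infty\lesssim L^{d/(d+p)}\,\beta_p^{\,p/(d+p)}$, hence $\beta_\infty^2\lesssim L^{2d/(d+p)}\,\beta_p^{\,2p/(d+p)}$. Since $2p/(d+p)<2$ for every finite $p$ once $d\geq1$, square-summability of the $\beta_p$ (which is what Dorronsoro supplies --- his theorem does not reach $q=\infty$) does not transfer to square-summability of the $\beta_\infty$: the exponent loss is fatal when you try to sum over scales. To reach the $\beta_\infty$ conclusion one must exploit the Lipschitz-graph structure more directly (this is the strong geometric lemma for Lipschitz graphs in the sense of David--Semmes), or work with $\beta_2$ throughout as the cited Azzam--Tolsa lemma does. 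Your final observation is correct: under $C^{1,\alpha}$ regularity Taylor's expansion does give the bound uniformly in $x$, which is exactly Proposition~\ref{taylor} in the paper.
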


\begin{proof}
This follows from the Main Lemma in \cite{tolsaI}, Lemma 2.1. 
\end{proof}

\begin{prop} \label{taylor}
Let $\alpha, \alpha' \in (0,1)$, $\alpha' > \alpha$ and let $G$ be a $C^{1,\alpha'}$ graph in $\R^n$. Then there exists $M >0$ such that
\begin{equation}
    J_{\infty,\alpha}^G(x)=\sum_{k=0}^{\infty}\frac{\beta_{\infty}^G(x,r_k)^2}{r_k^{\alpha}} \leq M, \quad \text{ for all $x \in G$}.
\end{equation}
\end{prop}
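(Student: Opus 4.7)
The plan is to prove the much stronger pointwise estimate $\beta_\infty^G(x,r_k) \leq K r_k^{\alpha'}$ and then observe that the sum defining $J_{\infty,\alpha}^G(x)$ becomes a geometric series with positive rate $2(\alpha'-\alpha)$ that is summable uniformly in $x$. The key ingredient is a first-order Taylor estimate for the $C^{1,\alpha'}$ defining function: replacing $G$ near $x$ by its tangent plane produces an error of order $|y-y_0|^{1+\alpha'}$, which is exactly what is needed.

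First I would fix coordinates so that $G$ is the graph of a function $\phi\colon V\to V^\perp$ with $\phi\in C^{1,\alpha'}$, where $V$ is a $d$-plane in $\R^n$, and let $K$ denote the $\alpha'$-H\"older constant of $D\phi$ together with whatever bound is needed on $\|D\phi\|_\infty$. Given $x=(y_0,\phi(y_0))\in G$, I would take $P_x$ to be the affine tangent plane
\begin{equation*}
P_x=\{(y_0+v,\,\phi(y_0)+D\phi(y_0)v):v\in V\}.
\end{equation*}
For any other point $p=(y,\phi(y))\in G$, the point $(y,\phi(y_0)+D\phi(y_0)(y-y_0))$ lies on $P_x$, so
\begin{equation*}
\dist(p,P_x)\leq |\phi(y)-\phi(y_0)-D\phi(y_0)(y-y_0)|=\left|\int_0^1\big[D\phi(y_0+t(y-y_0))-D\phi(y_0)\big](y-y_0)\,dt\right|\leq K|y-y_0|^{1+\alpha'}.
\end{equation*}

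Next, for any $p\in G\cap B(x,r_k)$ one has $|y-y_0|\leq|p-x|\leq r_k$, so the above gives $\dist(p,P_x)\leq Kr_k^{1+\alpha'}$, and therefore
\begin{equation*}
\beta_\infty^G(x,r_k)\leq\frac{1}{r_k}\sup_{p\in G\cap B(x,r_k)}\dist(p,P_x)\leq Kr_k^{\alpha'}.
\end{equation*}
Squaring and dividing by $r_k^{2\alpha}$, we obtain
\begin{equation*}
\frac{\beta_\infty^G(x,r_k)^2}{r_k^{2\alpha}}\leq K^2r_k^{2(\alpha'-\alpha)}=K^2\,10^{-2k(\alpha'-\alpha)},
\end{equation*}
and since $\alpha'>\alpha$ the sum over $k\geq 0$ is a convergent geometric series bounded by $M:=K^2/(1-10^{-2(\alpha'-\alpha)})$, independent of $x$.

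I do not expect any serious obstacle here; the only point to be careful about is the uniformity of $K$ over $G$, which is automatic if one takes ``$C^{1,\alpha'}$ graph'' to mean globally a graph of a single function with finite $C^{1,\alpha'}$ seminorm. If instead one allows $G$ to be only locally such a graph, then one should cover $G$ by finitely many patches, carry out the argument in each, and absorb the finitely many Taylor constants into $M$; the scales $r_k$ below the diameter of a coordinate patch are handled by the Taylor bound above, and the finitely many large scales are handled trivially using $\beta_\infty^G\leq 1$.
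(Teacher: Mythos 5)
Your argument is correct and is essentially the same as the paper's: both take the affine tangent plane at $x$ as the competitor plane, use the first-order Taylor/integral remainder estimate for the $C^{1,\alpha'}$ defining function to get $\beta_\infty^G(x,r)\lesssim r^{\alpha'}$, and then sum the resulting geometric series using $\alpha'>\alpha$. Your write-up is somewhat more explicit about the integral form of the remainder and the uniformity of the H\"older constant, but the idea and structure match.
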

\begin{proof}
The proof follows the steps from Example 3.1 in \cite{naber}. Let $M$ be the graph of a $C^{1,\alpha'}$ function $f\colon \R^d \to \R^{n-d}$. By the Taylor expansion around $(x_0,f(x_0))$ we get
\begin{equation}
    |f(x)-f(x_0) - \nabla f(x_0)\cdot (x-x_0)| \leq C |x-x_0|^{1+ \alpha'}.
\end{equation}
Because $M$ is smooth we can choose the tangent plane at $x_0$ as best approximating plane in $\beta_{\infty}^G(x_0,r)$, for $r$ sufficiently small.
Then we get
\begin{equation}
    \beta_{\infty}^G(x_0,r)^2 \leq C r^{2\alpha'}.
\end{equation}
This clearly implies that
\begin{equation}
    \sum_{k=0}^{\infty}\frac{\beta_{\infty}^G(x,r_k)^2}{r_k^{2\alpha}} \leq M
\end{equation}
as $\alpha'-\alpha>0$.
\end{proof}

In Remark \ref{e:pointwisebound}, we observed that the condition $\beta_{\infty}(x,r) \leq C r^{\alpha}$ would also work for our main theorems, and as the proof above shows, such condition is satisfied by a $C^{1,\alpha}$ map.

\subsection{Sharpness of the result}
The theorems are sharp in the following sense. Let $s \in (0,1)$ and $\e \in (0,1-s)$. Let $f \in C^{1, s + \frac{\e}{2}}$ such that $f$ is purely $C^{1, s + \e}$ unrectifiable (such a function exists by Proposition \ref{c1snoc1t}). Then by Proposition \ref{taylor} we know that for the graph of $f$, $G$ we have $J_{\infty,s}^G(x)< \infty$. That is that for every $\e \in (0,1-s)$ we have a function $f$ which is purely $C^{1, s + \e}$ unrectifiable and such that $J_{\infty,s}^G(x)< \infty$.
This is the same conclusion as the second part of Theorem 1.1 in \cite{kola3}.

\subsection{How to produce H\"{o}lder functions}
We outline another more flexible construction of a $C^{1,\alpha}$ function. For a more extensive discussion on how to generate H\"older functions, see B.6 in Appendix B by S. Semmes in \cite{gromov}. We include this example as it is of different nature than the one discussed in Proposition \ref{c1snoc1t}, and we can easily estimate its Jones function.

For the remainder of this section, let $\Delta_m$ denote the collection of dyadic intervals of size $2^{-m}$, and let $\Delta =\bigcup_{m=0}^{\infty}\Delta_m$.

For $J \in \Delta$, let $h_J$ be the Haar wavelet, normalized so that $\int_J|h_J(x)|\,dx=1$ and $\int_J h_J(x)\,dx=0$, that is
\begin{equation}
h_J(x) =\begin{cases} \frac{1}{|J|} & x \in J_{l} \\
-\frac{1}{|J|} & x \in J_{r},
\end{cases}
\end{equation}
where $J_l$ and $J_r$ are the left and right half of $J$, respectively. 
Now define 
\begin{equation}
\psi_I(x) =\int_{-\infty}^x h_{I}(t)\,dt
\end{equation}
 and 
\begin{equation}
g_k(x)=\sum_{j=0}^k \sum_{J \in \Delta_j} 2^{-\alpha j} \psi_J(x),
\end{equation}
where $\alpha \in (0,1)$. By Lemma \ref{stein}, $g (x)= \lim_{k \to \infty} g_k(x)$ is a $C^{\alpha}$ function, and so
\begin{equation}
f(x)= \int_0^x g(t) \,dt
\end{equation}
is a $C^{1,\alpha}$ function.

Observe that for the function $f$ we can compute explicitly the $\beta$ numbers. Note that, because $\beta_{\infty}(x,2^{-j}) \leq C \alpha_j$ by construction, we get that the Jones function for the graph of $f$ is 
\begin{equation}
J_{\infty,\alpha'}(x) \leq C \sum_{j=1}^{\infty} \frac{\alpha_j^2}{2^{-2\alpha' j}}=C \sum_{j=1}^{\infty}2^{-2(\alpha-\alpha') j},
\end{equation}

which is in line with the discussion in Section \ref{necessary}.
\begin{figure}%[H]
  \centering
    \includegraphics[width=0.7\textwidth]{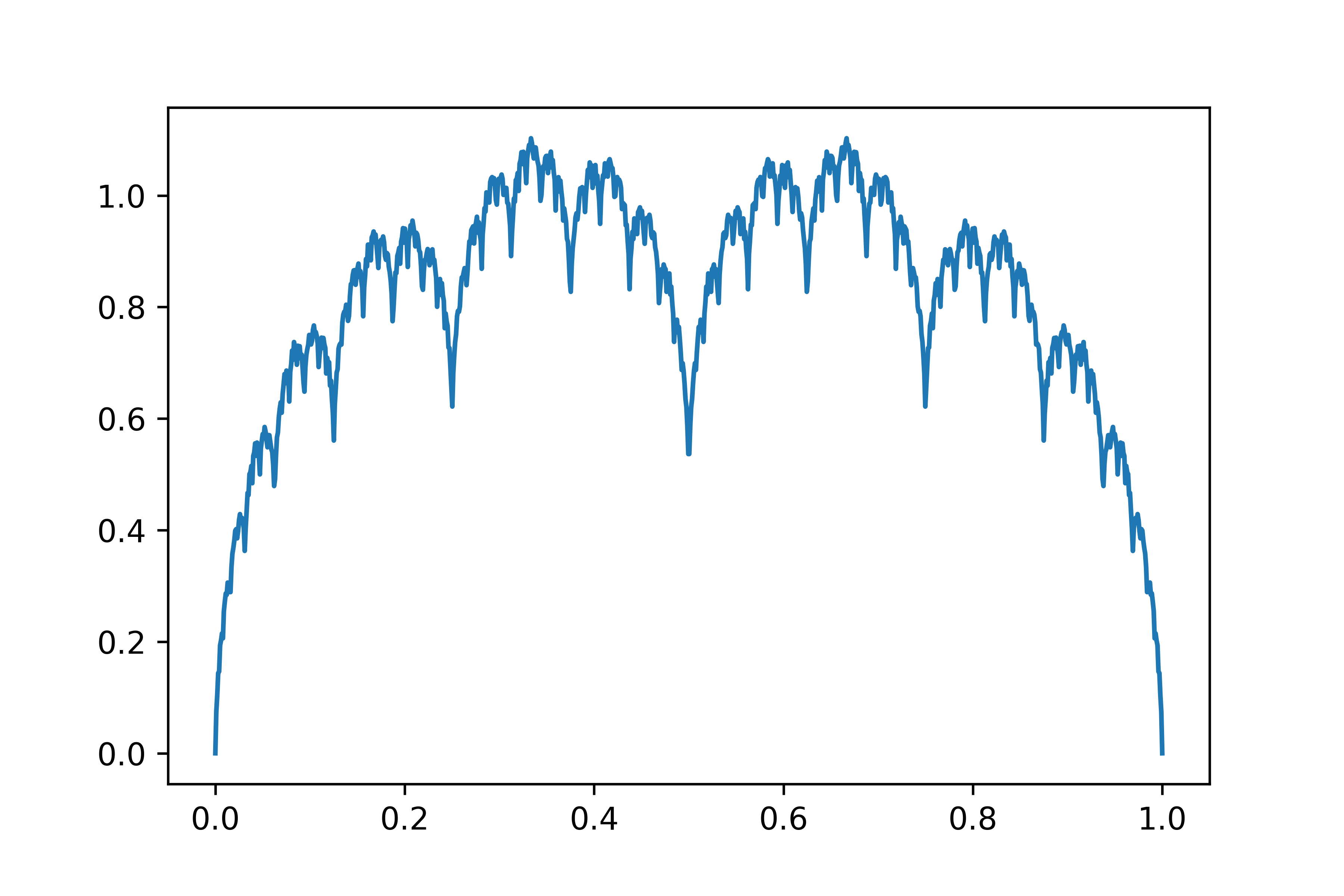}
    \caption{The function $g_k$ on $[0,1]$ for $k=10$ and $\alpha=\frac12$.}
\end{figure}

Now, we want to prove that $f$ is not $C^{1,\alpha + \e}$, for any $\e>0$. 
In order to do so, we will prove that $g$ is not $C^{\alpha+\e}$. Now, let $I$ be an interval of size $t=2^{-m}$, and let $K=2^k$ be a constant to be fixed later and write
\begin{align*} \numberthis
g(c_I)-g(x) & =\sum_{j=0}^\infty \sum_{J \in \Delta_j} 2^{-j \alpha } [\psi_J(c_I)-\psi_J(x)]= \\
%& = \sum_{j=0}^m \sum_{J \in \Delta_j} a_J [\psi_J(c_I)-\psi_J(x)]= \\
& =  \sum_{j>m+k} \sum_{J \in \Delta_j} 2^{-j \alpha }[\psi_J(c_I)-\psi_J(x)] + \\
& + \sum_{m-k\leq j \leq m+k} \sum_{J \in \Delta_j}2^{-j \alpha }[\psi_J(c_I)-\psi_J(x)] + \\
& + \sum_{j < m-k} \sum_{J \in \Delta_j} 2^{-j \alpha }[\psi_J(c_I)-\psi_J(x)] = \\
& = HF + MF + LF,
\end{align*}
the high, medium and low frequencies, respectively.

%We want to bound $I$ and $III$ from above, to show that the dominant term to obtain $\|g-E_tg\|_{\infty} \geq C t^{\alpha}$ is $II$.

Because of our normalization of the $h_J$'s, we have that $|\psi_J|\leq 1/2$. For the innermost sum, for any given $y$, at most one of the intervals $J$ of a fixed size $s$ is such that $\psi_J(y)\neq 0$. Then we have
\begin{align*} \numberthis
|HF| & \leq \sum_{j > m+k} 2^{-j \alpha }|\psi_J(c_I)| + \sum_{j > m+k} 2^{-j \alpha }|\psi_J(x)| \leq  \\
& \leq \sum_{j > m+k} 2^{-j \alpha } \leq 2^{-(m+k)\alpha+1} = \\
& =  2^{-m\alpha - k \alpha +1}.
\end{align*}

Now choose $x$ so that $|x-c_I|\leq 2^{-m-k}$.

Because of our definition of $\psi_J$, we have that $|\psi_J'(x)|=1/2^{-j}$, recalling that $J \in \Delta_j$, so that $|\psi_J(c_I)-\psi_J(x)| \leq 2^{-m}/2^{-j}=2^{-m+j}$. Moreover, because of our choice of $x$ only finitely many terms of the innermost sum are nonzero, and so we have 

\begin{align*} \numberthis
|LF| &\leq  \sum_{j < m-k} \sum_{J \in \Delta_j} 2^{-j \alpha}|\psi_J(c_I)-\psi_J(x)| \leq \\
& \leq  2\sum_{j < m-k} 2^{-j \alpha}2^{-m+j} = \\
& = 2^{-m+1} \sum_{j < m-k} 2^{j(1- \alpha)} \leq \\
& \leq 2^{-m+2} 2^{(m-k)(1-\alpha)} = \\
& = 2^{-m\alpha - k(1-\alpha) +2 } 
\end{align*}

Now, without loss of generality, we can assume $I$ and $x$ are both contained in $[0,1]$, as $g$ is periodically defined on the intervals $[n,n+1)$. Let $I=[0,2^{-n})$ and let $x$ be such that $|x| < \frac{2^{-m}}{2^{k+2}}=2^{-m-k-2}$. Then, noting that $\psi_J$ has positive slope both at $x$ and $c_I$ for our choices of $x$ and $I$, so that there is no cancellation, we get that $\psi_J(c_I)-\psi_J(x) \geq \frac14$. Finally, we get
\begin{align*} \numberthis
|MF| & = \left|\sum_{m-k\leq j \leq m+k} \sum_{J \in \Delta_j} 2^{-j\alpha}[\psi_J(c_I)-\psi_J(x)]\right| \geq \\
& \geq \frac14 \left|\sum_{m-k\leq j \leq m+k} 2^{-j\alpha}\right| \geq \\
& \geq  2^{-2 + k + 1} 2^{-(m+k)\alpha}= \\
& =2^{-m\alpha +k(1-\alpha)-1}.
\end{align*}

This means that, for infinitely many choices of $I$ and $x$, we have
\begin{align*} \numberthis
|g(c_I)-g(x)| & \geq |MF| - |HF| - |LF| \geq \\
&\geq 2^{-m\alpha +k(1-\alpha)-1}-2^{-m\alpha - k \alpha +1} - 2^{-m\alpha - k(1-\alpha) +2 } = \\
& = \left(2^{k(1-\alpha)-1}-2^{- k \alpha +1} - 2^{- k(1-\alpha) +2 }\right)2^{-m\alpha} = \\
& = \left(\frac12 K^{1-\alpha} - 2(K^{-\alpha}+K^{\alpha-1})\right)t^{\alpha},
\end{align*}
recalling that we set $K=2^k$, $t=2^{-m}$.
By choosing $K$ large enough with respect to $\alpha$, for instance by choosing $k =\frac{3}{1-\alpha}$ we get
\begin{equation}
|g(c_I)-g(x)|\geq 2 t^{\alpha}
\end{equation}
which concludes the proof.

A similar argument can be applied to many other intervals $I$. All we need is sufficiently many consecutive generations where $I$ is on the left side, to avoid cancellation. %If $I$ is not of the form we dealt with above, let $\widetilde{I}$ be the largest ancestor of $I$ so that $I$ is always the left child. Now let $x$ be such that $|x-x_I^l| < \frac{2^{-m}}{2^{k+2}}=2^{-m-k-2}$, where $x_I^l$ denotes the left endpoint of $I$ (which is the same as the left endpoint of $\widetilde{I}$ because of our construction). Let $n \leq m$ be such that $\widetilde{I} \in \Delta_n$. In the case that $m-n > k$ we can proceed as above.

Thus, on one hand $g$ is a $C^{\alpha}$ function, and we just proved it is not $C^{\alpha+\e}$ for any $\e>0$, at a dense set of points, so that $f$ as above is a $C^{1,\alpha}$ function which is not $C^{1,\alpha + \e}$.

Lastly, let us mention an interesting representation for H\"older functions, which is a slight modification of the procedure presented in section B.7 in the aforementioned Appendix by S. Semmes. The idea is  similar to the one discussed in Section \ref{proof} (that is, Theorem \ref{stein}).

Let $\widetilde{\psi}_I(x) =\int_{-\infty}^x h_{3I}(t)\,dt$, where $3I$ denotes the interval with the same center as $I$ and three times its size. Moreover define a partition of unity
\begin{equation}
\phi_I(x)=\frac{\widetilde{\psi}_I(x)}{\sum_{|J|=|I|} \widetilde{\psi}_J(x)}.
\end{equation}
Clearly $0 \leq \phi_I(x) \leq 1$, it's supported on $3I$ and it is $\frac{1}{|3I|}$-Lipschitz. Moreover, for every $x \in \R$
\begin{equation}
\sum_{|I|=t} \phi_I(x) = 1
\end{equation}

\begin{figure}%[H]
  \centering
    \includegraphics[width=0.7\textwidth]{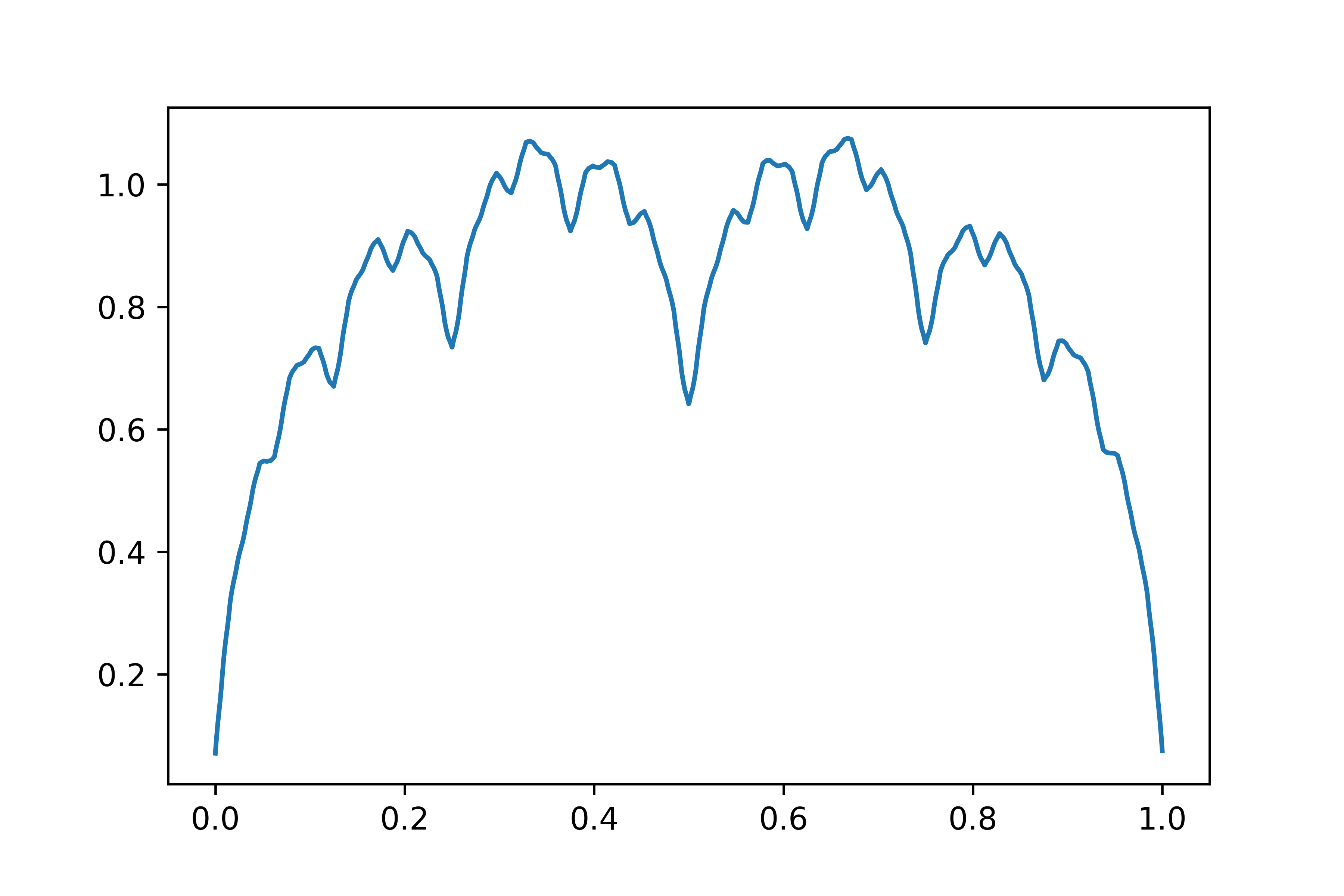}
    \caption{The function $E_t(g_k)$ on $[0,1]$ for $k=10$, $t=2^{-7}$ and $\alpha=\frac12$.}
\end{figure}

Given $G \colon \R \to \R$, a $\alpha$-H\"older function, define
\begin{equation}
E_t(G)(x)=\sum_{|I|=t} G(c_I)\phi_I(x),
\end{equation}
where the sum is over all dyadic intervals, $t=2^{-m}$, for some integer $m$, and $c_I$ denotes the center of the interval $I$. 

\begin{lem}[(Lemma B.7.8 \cite{gromov})]
There exists a constant $C$ such that
\begin{equation}
\sup_{\R} |G-E_t(G)| \leq CKt^{\alpha},
\end{equation}
 if $G$ is $\alpha$-H\"older with constant $K$.
\end{lem}

\begin{lem}[(Lemma B.7.11 \cite{gromov})]
There exists a constant $C$ such that $E_t(G)$ is $CKt^{\alpha-1}$-Lipschitz if $G$ is $\alpha$-H\"older with constant $K$.
\end{lem}

\begin{thm}
Let $G \colon \R \to \R$. Then for every $t>0$ there exists $G_t$ such that $\|G-G_t\|_{\infty} \leq Kt^{\alpha}$ and $G_t$ is $Kt^{\alpha-1}$-Lipschitz if and only if $G$ is $\alpha$-H\"older continuous with constant proportional to $K$.
\end{thm}
\begin{proof}
One direction follows directly from the lemmas above. For the other direction, let $x,y \in \R$ and set $t=|x-y|$.
\begin{align*} \numberthis
|G(x)-G(y)| & \leq | G(x)-G_t(x)| + |G_t(x)-G_t(y)| + |G_t(y)-G(y)| \leq \\
& \leq 2Kt^{\alpha} + K t^{\alpha-1}|x-y|=3Kt^{\alpha}. \qedhere
\end{align*}
\end{proof}

%Now we want to use the theorem to show that the function $g$ defined above cannot be $C^{\alpha + \e}$.
%It is enough to prove that there exists a constant $C$ such that, for infinitely many $t>0$,
%\begin{equation}
%\|g-E_tg\|_{\infty} \geq C t^{\alpha}. 
%\end{equation}
%
%Recalling that the $\phi_I$ are a partition of unity, write
%\begin{equation}
%g(x)-E_tg(x)= g(x) -\sum_{|I|=t} g(c_I)\phi_I(x)=-\sum_{|I|=t} [g(c_I)-g(x)]\phi_I(x).
%\end{equation}

\addcontentsline{toc}{section}{References}
%\begingroup
\setlength{\bibsep}{0pt plus 0.3ex}
\setstretch{1}
\bibliography{document}
\bibliographystyle{alpha}
%\endgroup
\vspace{2cm}
\textsc{School of Mathematics, Institute for Advanced Study, Princeton, NJ 08540, USA}

\emph{E-mail address}: \texttt{ghinassi@math.ias.edu}

\end{document}